\definecolor{gray}{gray}{0.5}
\numberwithin{equation}{section} 
\newtheorem{theorem}{Theorem}[section]
\newtheorem{lemma}[theorem]{Lemma} 
\newtheorem{corollary}[theorem]{Corollary}
\newtheorem{proposition}[theorem]{Proposition} 
\newtheorem{remark}[theorem]{Remark}
\newtheorem{example}[theorem]{Example}
\newtheorem{definition}[theorem]{Definition}
\def\C{\mathbb C}
\def\R{\mathbb R}
\def\Q{\mathbb Q}
\def\Z{\mathbb Z}
\def\P{\mathbb P}
\DeclareMathOperator{\Sym}{Sym}
\DeclareMathOperator{\Hom}{Hom}
\DeclareMathOperator{\height}{ht}
\DeclareMathOperator{\Spec}{Spec}
\DeclareMathOperator{\RSpec}{\bf{Spec}}
\DeclareMathOperator{\Pic}{Pic}
\newcommand{\Dyn}{I}
\newcommand{\HR}[1]{\Phi^+_{#1}}
\newcommand{\Hess}[2]{X(#1,#2)}
\newcommand{\poly}[2]{f_{#1,#2}}
\newcommand{\ideal}[2]{I_{#1,#2}}
\newcommand{\ring}[2]{A_{#1,#2}}
\newcommand{\AHess}[1]{\mathfrak{X}(#1)}
\newcommand{\FHess}[1]{\mathfrak{X}_{{\rm reg}}(#1)}
\newcommand{\pr}{p}
\newcommand{\LB}{L}
\newcommand{\IS}{\mathcal{L}}
\newcommand{\ZHess}[2]{\mathcal{Z}(#1,#2)}
\newcommand{\ZFHess}[1]{\mathcal{Z}_{{\rm reg}}(#1)}
\newcommand{\ZFHessline}[1]{\mathcal{Z}'_{{\rm reg}}(#1)}
\newcommand{\sect}[1]{\sigma_{#1}}
\newcommand{\sectseq}{\sigma'}
\newcommand{\Fsect}{\sigma}
\newcommand{\indshift}{\theta}
\begin{document}
  
\title[Regular Hessenberg varieties]{Geometry of regular Hessenberg varieties}
\author {Hiraku Abe}
\address{Osaka City University Advanced Mathematical Institute, 3-3-138 Sugimoto, Sumiyoshi-ku, Osaka 558-8585, 
Japan}
\email{hirakuabe@globe.ocn.ne.jp}

\author {Naoki Fujita}
\address{Department of Mathematics, Tokyo Institute of Technology, 2-12-1 Oh-okayama, Meguro-ku, Tokyo 152-8551, Japan}
\email{fujita.n.ac@m.titech.ac.jp}

\author {Haozhi Zeng}
\address{School of Mathematical Sciences, Fudan University, Shanghai, 200433, P.R. China}
\email{zenghaozhi@icloud.com}

\keywords{Hessenberg varieties, flag varieties, sheaf cohomology groups, flat families.} 

\subjclass[2000]{Primary: 14M15, Secondary: 05E10, 55N30} 

\begin{abstract}
Let $\mathfrak{g}$ be a complex semisimple Lie algebra. For a regular element $x$ in $\mathfrak{g}$ and a Hessenberg space $H\subseteq \mathfrak{g}$, we consider a regular Hessenberg variety $\Hess{x}{H}$ in the flag variety associated with $\mathfrak{g}$. We take a Hessenberg space so that $\Hess{x}{H}$ is irreducible, and show that the higher cohomology groups of the structure sheaf of $\Hess{x}{H}$ vanish. We also study the flat family of regular Hessenberg varieties, and prove that the scheme-theoretic fibers over the closed points are reduced. We include applications of these results as well.
\end{abstract}

\maketitle

\setcounter{tocdepth}{1}

\section{Introduction}
Hessenberg varieties are closed subvarieties of the flag variety, which were originally introduced in \cite{DMS} and \cite{DMPS}.
They have been studied from various points of view such as geometry, topology, combinatorics, and representation theory (e.g.\ \cite{br-ca04}, \cite{ty}, \cite{MbTym}, \cite{ha-ho-ma}, \cite{Balibanu}, \cite{AHHM}, \cite{AHM} \cite{InskoPrecup}, \cite{dr17}). In Lie type $A$, Tymoczko constructed representations of the symmetric groups on the cohomology rings of regular semisimple Hessenberg varieties (\cite{tymo08}). In 2015, Brosnan-Chow (\cite{Brosnan-Chow}) proved a beautiful conjecture given by Shareshian-Wachs (\cite{sh-wa11}, \cite{sh-wa14}) which connects the representations of the symmetric groups on the cohomology rings of regular semisimple Hessenberg varieties and the quasi-symmetric chromatic functions of graphs constructed from Hessenberg functions. In their argument, regular Hessenberg varieties play an important role, and motivated by their work, Precup (\cite{precup16}) proved that the Betti numbers of an arbitrary regular Hessenberg variety are palindromic in general Lie type. In this paper, we investigate geometry of regular Hessenberg varieties.

To be more precise, let $G$ be a simply connected semisimple algebraic group over $\C$ and $B\subseteq G$ a Borel subgroup of $G$. Let $\mathfrak{g}$ and $\mathfrak{b}$ be the Lie algebras of $G$ and $B$, respectively. A Hessenberg space $H$ is defined to be a linear subspace of $\mathfrak{g}$ that contains $\mathfrak{b}$ and is stable under the adjoint action of $\mathfrak{b}$. For an element $x\in\mathfrak{g}$, the  Hessenberg variety $\Hess{x}{H}\subseteq G/B$ associated with $x$ and $H$ is a (possibly non-irreducible) closed subvariety of $G/B$ given by
\begin{align*}
\Hess{x}{H} = \{gB\in G/B \mid \text{Ad}(g^{-1})x\in H \}.
\end{align*}
A regular Hessenberg variety is the one defined by a regular element $x$ in $\mathfrak{g}$, namely an element $x\in\mathfrak{g}$ whose Lie algebra centralizer $Z_{\mathfrak{g}}(x)$ has the minimum possible dimension. There are two cases of regular Hessenberg varieties that are well-studied: 
(1) if $s\in\mathfrak{g}$ is a regular semisimple element, then $\Hess{s}{H}$ is called a regular semisimple Hessenberg variety; (2) if $N_0\in\mathfrak{g}$ is a regular nilpotent element, then $\Hess{N_0}{H}$ is called a regular nilpotent Hessenberg variety. 
For example, one can choose a Hessenberg space $H$ so that $\Hess{s}{H}$ is a toric variety and $\Hess{N_0}{H}$ is the Peterson variety (see Section \ref{sec: regular Hessenberg varieties} for details). Throughout this paper, we will impose an assumption on the Hessenberg space $H$ so that corresponding regular Hessenberg varieties are irreducible (see condition \eqref{eq:containing simple roots} in Section \ref{sec: regular Hessenberg varieties}). 

We now state our main results. Regular Hessenberg varieties are singular and non-normal in general (\cite{Kostant}, \cite{InskoYong}). Among them, a regular semisimple Hessenberg variety $\Hess{s}{H}$ is known to be smooth and it admits a torus action with finite fixed points (\cite{DMPS}). So it follows that the higher cohomology groups of its structure sheaf vanish (\cite{CL}). As the first main result in this paper, we generalize this fact for an arbitrary regular Hessenberg variety $\Hess{x}{H}$ under the above assumption on $H$.

\begin{theorem}\label{intro: vanishing thm}
Let $X=\Hess{x}{H}$ be a regular Hessenberg variety. Then we have 
\begin{align*}
H^i(X,\mathcal{O}_X)=0 \quad (i>0), 
\end{align*}
where $\mathcal{O}_X$ is the structure sheaf of $X$.
\end{theorem}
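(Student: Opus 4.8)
The plan is to eliminate the dependence on $x$, package all regular Hessenberg varieties into one flat family, and reduce the fiberwise vanishing first to a single cohomology vanishing on the total space and then to a Bott-type statement on $G/B$. Since $H$ is stable under $\mathrm{Ad}(B)$, one checks $\Hess{\mathrm{Ad}(g)x}{H}\cong\Hess{x}{H}$ for all $g\in G$, so $\Hess{x}{H}$ depends only on the adjoint orbit of $x$; as every regular orbit meets the Kostant section $\mathcal{S}=e+\mathfrak{z}_{\mathfrak{g}}(f)$ (for a principal $\mathfrak{sl}_2$-triple $(e,h,f)$) in a single point, I may assume $x\in\mathcal{S}$. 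I then form $\mathcal{Z}=\{(y,gB)\in\mathcal{S}\times G/B\mid \mathrm{Ad}(g^{-1})y\in H\}$, with $\pi\colon\mathcal{Z}\to\mathcal{S}\cong\mathbb{A}^{r}$ ($r=\rank\mathfrak{g}$) and the second projection $p_2\colon\mathcal{Z}\to G/B$. The morphism $\pi$ is proper, being the restriction of $\mathcal{S}\times G/B\to\mathcal{S}$ to the closed subvariety $\mathcal{Z}$; its fibers are exactly the regular Hessenberg varieties $\Hess{y}{H}$, and the fiber over a regular semisimple point is smooth with vanishing higher structure-sheaf cohomology by the known case.

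Next I would exhibit $\mathcal{Z}$ as a zero locus. On $P:=\mathcal{S}\times G/B$ put $\mathcal{F}=p_2^{\ast}\bigl(G\times_{B}(\mathfrak{g}/H)\bigr)$, the pullback of the homogeneous bundle of rank $c=\dim(\mathfrak{g}/H)$, together with the section $(y,gB)\mapsto \mathrm{Ad}(g^{-1})y\bmod H$, whose zero scheme is precisely $\mathcal{Z}$. Because each regular Hessenberg variety is irreducible of dimension $\dim(H/\mathfrak{b})$ under the standing hypothesis on $H$, the zero scheme has the expected codimension $c$, so the section is regular and its Koszul complex $0\to\wedge^{c}\mathcal{F}^{\vee}\to\cdots\to\mathcal{F}^{\vee}\to\mathcal{O}_{P}\to\mathcal{O}_{\mathcal{Z}}\to0$ is a resolution. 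Hence $\mathcal{Z}$ is a local complete intersection, in particular Cohen--Macaulay; having equidimensional fibers over the regular scheme $\mathcal{S}$, the map $\pi$ is then flat by miracle flatness.

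Now I would carry out two reductions. Since $\mathcal{S}$ is affine, $R^{i}\pi_{\ast}\mathcal{O}_{\mathcal{Z}}$ is the sheaf associated with $H^{i}(\mathcal{Z},\mathcal{O}_{\mathcal{Z}})$, so it vanishes for $i>0$ if and only if $H^{i}(\mathcal{Z},\mathcal{O}_{\mathcal{Z}})=0$ for $i>0$. Granting this, flatness and properness let me run the cohomology-and-base-change exchange sequence downward from degree $\dim\Hess{x}{H}+1$: at each step $R^{i+1}\pi_{\ast}\mathcal{O}_{\mathcal{Z}}=0$ is locally free, whence the degree-$i$ base-change map is surjective, and since its source $R^{i}\pi_{\ast}\mathcal{O}_{\mathcal{Z}}\otimes k(t)$ is zero its target $H^{i}(\Hess{y}{H},\mathcal{O})$ vanishes for every $i\ge 1$ and every $t$; this gives the theorem. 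It thus remains to prove $H^{i}(\mathcal{Z},\mathcal{O}_{\mathcal{Z}})=0$ for $i>0$. Feeding the Koszul resolution into the hypercohomology spectral sequence $E_{1}^{-p,q}=H^{q}(P,\wedge^{p}\mathcal{F}^{\vee})\Rightarrow H^{q-p}(\mathcal{Z},\mathcal{O}_{\mathcal{Z}})$ and using $P=\mathbb{A}^{r}\times G/B$ with $\mathbb{A}^{r}$ affine (so $H^{q}(P,\wedge^{p}\mathcal{F}^{\vee})=\mathbb{C}[\mathbb{A}^{r}]\otimes H^{q}(G/B,\wedge^{p}(\mathfrak{g}/H)^{\vee})$), the whole statement reduces to the vanishing $H^{q}\bigl(G/B,\wedge^{p}(\mathfrak{g}/H)^{\vee}\bigr)=0$ for all $q>0$ and all $p$.

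This last vanishing is where I expect the real difficulty to lie. The $B$-module $\mathfrak{g}/H$ carries a filtration whose graded pieces are the characters $\mathbb{C}_{-\beta}$, with $\beta$ running over the positive roots for which $\mathfrak{g}_{-\beta}\not\subseteq H$; consequently $\wedge^{p}(\mathfrak{g}/H)^{\vee}$ is filtered with graded pieces the homogeneous line bundles $\mathcal{L}_{\lambda_{T}}$ attached to $\lambda_{T}=\sum_{\beta\in T}\beta$ over $p$-element subsets $T$ of that root set, and it suffices to kill $H^{q>0}$ of each piece. By Bott's theorem this holds as soon as every $\lambda_{T}+\rho$ is dominant or singular, i.e.\ $\langle\lambda_{T},\alpha_{i}^{\vee}\rangle\ge -1$ for each simple coroot. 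Here the assumption that $H$ contains all simple negative root spaces is essential: it forces every $\beta\in T$ to be non-simple, which is exactly what prevents the pairings $\langle\lambda_{T},\alpha_{i}^{\vee}\rangle$ from falling below $-1$ (a single simple root in $T$ already breaks this in type $A_{3}$). I would prove the required estimate by combining non-simplicity with the lower-set structure of the missing roots forced by $\mathfrak{b}$-stability of $H$; carrying this out uniformly over all subsets $T$ and in all Lie types is the main obstacle.
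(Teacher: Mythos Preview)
Your overall plan—resolve $\mathcal{O}_X$ by the Koszul complex of the defining section and reduce everything to a Bott vanishing on $G/B$—is a viable route, but the final step contains a sign error that makes your stated inequality false.  The subquotients of $G\times_B\bigwedge^{p}(\mathfrak{g}/H)^{\vee}$ are the line bundles $G\times_B\mathbb{C}_{\lambda_T}$; in the paper's convention this is $L_{-\lambda_T}$, so the Bott condition for $H^{q>0}=0$ is that $\rho-\lambda_T$ (not $\lambda_T+\rho$) be dominant or singular.  Your inequality $\langle\lambda_T,\alpha_i^{\vee}\rangle\ge -1$ is the wrong one and in fact fails: in type $A_5$ with $T=\{\alpha_1+\alpha_2,\ \alpha_4+\alpha_5\}$ one has $\langle\lambda_T,\alpha_3^{\vee}\rangle=-2$, and $\lambda_T+\rho$ is regular but not dominant.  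The correct statement—that $\rho-\lambda_T$ is \emph{singular} for every nonempty set $T$ of non-simple positive roots—is precisely the paper's Lemma~\ref{lem: Macdonald}, proved in a few lines from Macdonald's characterisation of inversion sets: were $\rho-\lambda_T$ regular, one would have $T=w\Phi^{-}\cap\Phi^{+}$ for some $w\in W$, but an inversion set containing no simple root forces $w=e$ and hence $T=\emptyset$.  So the ``main obstacle'' you flag is both mis-stated and already dispatched by a short classical argument; no type-by-type analysis is needed.

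Two further remarks.  First, the passage through the Kostant section and cohomology-and-base-change is unnecessary: for any fixed regular $x$ the zero scheme of $\sigma_x$ in $G/B$ already has the expected codimension (Theorem~\ref{thm:precup}), so the Koszul complex on $G/B$ directly resolves its structure sheaf, and the Bott input above finishes without ever forming a family.  Second, Koszul resolves $\mathcal{O}$ of the zero \emph{scheme}; you implicitly identify this with the variety $X(x,H)$, which requires the reducedness proved in Proposition~\ref{prop: zero scheme is reduced}.  The paper's own proof avoids both the family and the Koszul spectral sequence: it proves the stronger statement $H^{i}(X,\mathcal{L}_{-\gamma})=0$ for $i>p$ whenever $\gamma$ is a sum of $p$ distinct non-simple roots in $\Phi^+_H$, by induction on $\dim(\mathfrak{g}/H)$, peeling off one maximal root $\alpha$ at a time via the divisor sequence $0\to\mathcal{L}_{-\alpha}\to\mathcal{O}_{X}\to\mathcal{O}_{X'}\to 0$ of Proposition~\ref{prop: ideal sheaf}.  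Both routes ultimately rest on the same Macdonald lemma; the paper's induction is just the Koszul filtration unwound one step at a time.
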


To prove this, we first determine the local defining ideals (equivalently, the local coordinate rings) of regular Hessenberg varieties in $G/B$ as generalizations of \cite[Theorem 6]{InskoYong} and \cite[Proposition 3.5]{ab-de-ga-ha}, and then we combine that with the Borel-Weil-Bott theorem for the cohomology groups of line bundles over $G/B$. We will see how the Borel-Weil-Bott theorem controls the cohomology groups of the structure sheaves of regular Hessenberg varieties. As a corollary of Theorem \ref{intro: vanishing thm}, we obtain an isomorphism $\Pic(\Hess{x}{H}) \cong H^2(\Hess{x}{H};\Z)$ between the Picard group and the singular cohomology group of degree $2$. Also, we will see that the determination of the local defining ideals of regular Hessenberg varieties in fact implies that their classes in the $K$-theory of coherent sheaves on $G/B$ do not depend on the choice of regular elements.

In this paper, we also study the family of regular Hessenberg varieties:
\begin{align*}
\FHess{H} = \{(gB,x) \in G/B \times \mathfrak{g}_{{\rm reg}} \mid \text{Ad}(g^{-1})x \in H \},
\end{align*}
where $\mathfrak{g}_{{\rm reg}}$ is the open subset of $\mathfrak{g}$ consisting of regular elements.
We denote the second projection by $\pi\colon \FHess{H} \rightarrow \mathfrak{g}_{{\rm reg}}$, and then each fiber $\pi^{-1}(x)$ for $x\in\mathfrak{g}_{{\rm reg}}$ is the regular Hessenberg variety $\Hess{x}{H}$. As we will see, the projection $\pi\colon \FHess{H} \rightarrow \mathfrak{g}_{{\rm reg}}$ is a flat morphism. Now the following is the second main result of this paper.
\begin{theorem}\label{intro: reducedness thm}
The scheme-theoretic fibers of the morphism \emph{$\pi\colon\FHess{H} \rightarrow \mathfrak{g}_{{\rm reg}}$} over the closed points are reduced.
\end{theorem}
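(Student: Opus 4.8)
The plan is to prove reducedness fiber by fiber via Serre's criterion, verifying the conditions $S_1$ and $R_0$ separately, and to reduce the verification of $R_0$ to a single most-degenerate fiber by a symmetry argument.

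First I would record that the total space $\FHess{H}$ is smooth. Indeed, the first projection exhibits $\FHess{H}$ as the open subscheme of the associated bundle $G\times_B H$ over $G/B$ (with fiber the $\operatorname{Ad}(B)$-stable Hessenberg space $H$) consisting of the points whose image in $\mathfrak g$ is regular; since $G\times_B H$ is smooth, so is $\FHess{H}$. As $\pi$ is flat with smooth (hence Cohen--Macaulay) base $\mathfrak g_{{\rm reg}}$ and smooth total space, every scheme-theoretic fiber $\pi^{-1}(x)=\Hess{x}{H}$ is Cohen--Macaulay, and in particular satisfies $S_1$, i.e.\ has no embedded components. Under the standing assumption on $H$ each fiber is irreducible, so it has a unique associated point; by Serre's criterion it is therefore reduced if and only if it is generically reduced (condition $R_0$).

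Next I would reduce $R_0$ to one distinguished fiber. Two symmetries act compatibly with $\pi$: the adjoint $G$-action, under which $\pi$ is equivariant and $\Hess{gxg^{-1}}{H}\cong\Hess{x}{H}$, and the scaling $x\mapsto tx$ $(t\in\C^\times)$, under which $\Hess{tx}{H}=\Hess{x}{H}$ because $H$ is a linear subspace. By Kostant's theorem the regular elements with a fixed value of the adjoint quotient $\chi\colon\mathfrak g\to\mathfrak g/\!\!/G\cong\C^{r}$ $(r=\operatorname{rank}\mathfrak g)$ form a single $G$-orbit, so $\chi$ realizes $\mathfrak g_{{\rm reg}}$ as an orbit space and whether $\Hess{x}{H}$ is reduced depends only on $\chi(x)$. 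By the openness of the locus of geometrically reduced fibers in a proper flat family (EGA IV, 12.2.4), the non-reduced locus is closed; being $G$- and $\C^\times$-stable it descends to a closed subset $S\subseteq\C^{r}$ which is stable under the weighted scaling with strictly positive weights $d_1,\dots,d_r$. If $S$ were nonempty it would contain the origin (its scaling limit), whose regular fiber under $\chi$ is the regular nilpotent orbit. Hence it suffices to show that the single fiber $\Hess{N_0}{H}$ over a regular nilpotent element $N_0$ is reduced. I would then treat this fiber using the local defining ideals determined earlier in the paper, which generalize those of \cite{InskoYong} and \cite{ab-de-ga-ha}: I would identify the scheme structure of $\pi^{-1}(N_0)$ in each coordinate chart of $G/B$ with the explicit ideal $\ideal{N_0}{H}$, and, since $S_1$ is already in hand, establish $R_0$ by exhibiting a dense open subset on which the explicit generators have independent differentials (equivalently, a squarefree initial behavior at the generic point).

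I expect this last step to be the \textbf{main obstacle}. The regular nilpotent fiber is the most degenerate one and may fail to be smooth anywhere as a fiber of $\pi$, so generic reducedness cannot be read off from generic smoothness of $\pi$ and must instead be extracted from the explicit form of $\ideal{N_0}{H}$. The Cohen--Macaulayness furnished by the smooth total space is precisely what makes $S_1$ automatic and collapses the entire theorem to this one local, essentially combinatorial, radicality statement for the nilpotent fiber.
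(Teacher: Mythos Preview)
Your proposal is correct and shares with the paper the essential ingredients: Cohen--Macaulayness of each fiber (so that reducedness reduces to $R_0$), $G$-equivariance to move between fibers, and the explicit local coordinate computation to verify generic reducedness. The route, however, is genuinely different. The paper does \emph{not} invoke semicontinuity or the $\C^\times$-scaling reduction to the regular nilpotent fiber; instead it verifies reducedness directly for every standard-form regular element $x_J$ at once. The key Lemma~3.2 shows, for each $J\subseteq I$, that on the $w_0$-chart the local ring $A_{H,w_0}$ is a polynomial ring (by a case analysis on whether a root lies in $\Phi(J^*)$), yielding generic reducedness; combined with irreducibility and Cohen--Macaulayness this gives reducedness of $\mathcal Z(x_J,H)$ for all $J$ (Proposition~3.3). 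Theorem~6.2 then follows by conjugating an arbitrary $x\in\mathfrak g_{\rm reg}$ to some $x_J$ and identifying the scheme-theoretic fiber with $\mathcal Z(x_J,H)$ via a base-change diagram. Your approach trades this uniform-in-$J$ computation for an EGA openness argument plus a Kostant-section step (needed to conclude that the image of the closed non-reduced locus in $\C^r$ is again closed, so that the weighted $\C^\times$-contraction forces $0$ into it); this is conceptually pleasant in isolating the single most degenerate fiber $N_0$, but since Lemma~3.2 already handles all $x_J$ with the same case analysis, the paper's direct route is shorter. Either way, the ``main obstacle'' you flag is exactly the content of Lemma~3.2, and its proof (an inductive elimination of variables using the injective map $\iota$ on $\Phi^+$) is the substantive computation you would need to carry out.
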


This generalizes the reducedness of scheme-theoretic fibers of a (1-parameter) flat degeneration of a regular semisimple Hessenberg variety $\Hess{s}{H}$ to a regular nilpotent Hessenberg variety $\Hess{N_0}{H}$, studied in \cite{an-ty} and \cite{ab-de-ga-ha} in Lie type $A$.

As a corollary of Theorem \ref{intro: reducedness thm}, we compute the volume of the projective embeddings of $\Hess{x}{H}$ with respect to the very ample line bundles defined over $G/B$. More precisely, let $V_{\lambda}$ be the irreducible representation of $G$ with highest weight $\lambda$. We have the standard embedding of $G/B$ into $\P(V_{\lambda})$ (see the end of Section \ref{subsec: reducedness of fibers} for details). By composing this with the inclusion $\Hess{x}{H}\hookrightarrow G/B$, we obtain a closed embedding $\Hess{x}{H}\hookrightarrow \P(V_{\lambda})$. Theorem \ref{intro: reducedness thm} now shows that the Hilbert polynomial of this embedding does not depend on a choice of $x\in\mathfrak{g}_{{\rm reg}}$, and we provide a Lie-theoretic formula for the volume of this embedding by using this result. This is a generalization of a similar computation for regular nilpotent Hessenberg varieties $\Hess{N_0}{H}$ in Lie type A given in \cite{ab-de-ga-ha}.

\bigskip
\noindent \textbf{Acknowledgements.} We are grateful to William Slofstra for the fruitful discussion on a Lie-theoretic approach to sheaf cohomology groups of Hessenberg varieties. 
We appreciate Patrick Brosnan for kindly explaining the flatness of the family of the regular Hessenberg varieties. We also thank Mikiya Masuda for his support and encouragement. We are also grateful to Alexander Woo for explaining the formula for K-theory classes of regular Hessenberg varieties.
Part of the research for this paper was carried out at the Fields Institute; the authors would like to thank the institute for its hospitality. The first author is partially supported by a JSPS Grant-in-Aid for Young Scientists (B): 15K17544 and a JSPS Research Fellowship for Young Scientists Postdoctoral Fellow: 16J04761. The second author is partially supported by a JSPS Research Fellowship for Young Scientists Doctoral Course Students: 16J00420. The third author was partially supported by NSFC, grants No. 11661131004 and 11431009.

\bigskip
\section{Background and notation}\label{sec: background and notation}
\subsection{Regular Hessenberg varieties}\label{sec: regular Hessenberg varieties}

Let $G$ be a simply connected, semisimple algebraic group over $\C$ of rank $n$. Choose a Borel subgroup $B\subset G$ and a maximal torus $T\subset B$. Let $W=N(T)/T$ be the Weyl group, where $N(T)$ is the normalizer of $T$ in $G$. We denote by $\mathfrak{g}$, $\mathfrak{b}$, and $\mathfrak{t}$ the Lie algebra of $G$, $B$, and $T$, respectively. We have a root space decomposition $\mathfrak{g}=\mathfrak{t}\oplus\bigoplus_{\alpha\in\Phi}\mathfrak{g}_{\alpha}$, where $\Phi$ is the set of roots. Let $\Phi^+$ (resp.\ $\Phi^-$) be the set of positive (resp.\ negative) roots and $\Delta=\{\alpha_1,\ldots,\alpha_n\}\subset\Phi^+$ the set of simple roots. We write $\mathfrak{g} = \mathfrak{t} \oplus \mathfrak{n} \oplus \mathfrak{n}^-$ by putting $\mathfrak{n}=\bigoplus_{\alpha\in\Phi^+}\mathfrak{g}_{\alpha}$ and $\mathfrak{n}^-=\bigoplus_{\alpha\in\Phi^+}\mathfrak{g}_{-\alpha}$. Denote by $I$ the Dynkin diagram of $\mathfrak{g}$ which we identify with the indexing set $\{1,\ldots,n\}$ for the simple roots $\alpha_1,\ldots,\alpha_n$ unless otherwise specified. For $\alpha,\beta\in\Phi^+$, we write $\alpha \preceq \beta$ when $\alpha\in\beta-\textstyle{\sum_{i\in \Dyn}}\Z_{\geq0}\alpha_i$.

A \textbf{Hessenberg space} $H$ is a $\mathfrak{b}$-submodule $H\subseteq \mathfrak{g}$ containing $\mathfrak{b}$ (\cite{DMPS}). The $\mathfrak{b}$-invariance of $H$ implies that we can always write
\begin{align}\label{eq: decomp of H}
H = \mathfrak{b} \oplus \bigoplus_{\alpha\in \HR{H}}\mathfrak{g}_{-\alpha}
\end{align}
for some subset $\HR{H}\subseteq \Phi^+$ which satisfies the following property: if $\beta\in\HR{H}$ and $\alpha$ is a positive root satisfying $\alpha\preceq\beta$, then we must have $\alpha\in\HR{H}$. Throughout this paper, we assume that $\HR{H}$ contains all the simple roots:
\begin{align}\label{eq:containing simple roots}
\Delta \subseteq \HR{H}. \tag{$\dagger$}
\end{align}

An element $x\in\mathfrak{g}$ is called \textbf{regular} if the Lie algebra centralizer $Z_{\mathfrak{g}}(x)$ has the minimum possible dimension, i.e.\ the rank $n$ of $\mathfrak{g}$. In other words, $x\in\mathfrak{g}$ is regular if and only if its adjoint orbit in $\mathfrak{g}$ has the maximum possible dimension (\cite[Ch.\ 1]{Humphreys95}).

Let $x\in\mathfrak{g}$ be a regular element and $H\subseteq \mathfrak{g}$ a Hessenberg space which satisfies condition \eqref{eq:containing simple roots}. 
The \textbf{regular Hessenberg variety associated to $x$ and $H$} is defined to be 
\begin{align}\label{eq:def of reg Hess}
\Hess{x}{H} \coloneqq \{gB \in G/B \mid \text{Ad}(g^{-1})x \in H \}.
\end{align}
Namely, an element $gB\in G/B$ lies in $\Hess{x}{H}$ if and only if the $\mathfrak{g}_{-\alpha}$-component of $\text{Ad}(g^{-1})x$ vanishes for all $\alpha\in \Phi^+ \setminus \HR{H}$ with respect to the root space decomposition of $\mathfrak{g}$. When $H$ is the Lie algebra $\mathfrak{g}$ itself, we always have $\Hess{x}{\mathfrak{g}}=G/B$. In Lie type $A$, geometry of the family of regular Hessenberg varieties plays an important role in the paper \cite{Brosnan-Chow} of Brosnan-Chow to prove the Shareshian-Wachs conjecture  which is thought of as one of the main steps toward proving the Stanley-Stembridge conjecture in graph theory (see \cite{sh-wa14} for the details). Also, Precup \cite{precup16} proved that the Betti numbers of an arbitrary regular Hessenberg variety are palindromic. The class of regular Hessenberg varieties contains the following two classes which are particularly well-studied:

(1) If $s$ is a regular semisimple element of $\mathfrak{g}$, then $\Hess{s}{H}$ is called a \textbf{regular semisimple Hessenberg variety} (\cite{DMPS}). It is a non-singular projective variety, and it is preserved by the action of a maximal torus of $G$ on $G/B$, where the fixed point set of $\Hess{s}{H}$ is naturally identified with the Weyl group $W$. If $\HR{H}=\Delta$ in addition, then it is a toric variety (\cite{klya}, \cite{proc90}, \cite{DMPS}). In Lie type $A$, this toric variety is also called a permutohedral variety.

(2) If $N_0$ is a regular nilpotent element of $\mathfrak{g}$, then $\Hess{N_0}{H}$ is called a \textbf{regular nilpotent Hessenberg variety}. It is a singular (and non-normal) projective variety in general (\cite{Kostant}, \cite{InskoYong}). The cohomology ring $H^*(\Hess{N_0}{H};\R)$ has been studied well (\cite{br-ca04}, \cite{dr}, \cite{ha-ho-ma}, \cite{AHHM}), and recently Sommers-Tymoczko (\cite{so-ty}) and Abe-Horiguchi-Masuda-Murai-Sato (\cite{AHMMS}) discovered a connection with the theory of hyperplane arrangements. If $\HR{H}=\Delta$ in addition, then it is called the \textbf{Peterson variety} (\cite{Kostant}, \cite{ha-ho-ma}, \cite{Balibanu}) which appeared in Kostant's study of the quantum cohomology of the flag variety in \cite{Kostant}. 

For an arbitrary regular element, the following holds. Recall that $\HR{H}\subseteq \Phi^+$ is a set of positive roots defined in \eqref{eq: decomp of H}.
\begin{theorem}\label{thm:precup}
$($Precup \cite{precup16}$)$
For a regular element $x$ and a Hessenberg space satisfying condition \eqref{eq:containing simple roots}, 
the regular Hessenberg variety $\Hess{x}{H}$ is irreducible of dimension $|\HR{H}|$. In particular, the codimension of $\Hess{x}{H}$ in $G/B$ is $|\Phi^+\setminus\HR{H}|$.
\end{theorem}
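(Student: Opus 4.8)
The plan is to treat the codimension assertion as a formal consequence and concentrate on the two substantive claims: that $\Hess{x}{H}$ is irreducible and that $\dim\Hess{x}{H}=|\HR{H}|$; since $\dim G/B=|\Phi^+|$, the codimension $|\Phi^+\setminus\HR{H}|$ then drops out. The lower bound on the dimension is cheap and uniform in $x$. Form the incidence variety $Y=\{(gB,z)\in G/B\times\mathfrak{g}\mid z\in\text{Ad}(g)H\}$, which is the total space of the vector bundle $G\times_B H$ over $G/B$, hence irreducible of dimension $|\Phi^+|+\dim H$. Its second projection $p\colon Y\to\mathfrak{g}$ has $\Hess{x}{H}$ as the fiber over $x$, and since $H\supseteq\mathfrak{b}$, Grothendieck--Springer surjectivity makes $p$ dominant. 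As $\dim\mathfrak{g}=2|\Phi^+|+n$ and $\dim H=|\Phi^+|+n+|\HR{H}|$, the generic fiber of $p$ has dimension $|\HR{H}|$, so upper semicontinuity of fiber dimension gives $\dim\Hess{x}{H}\geq|\HR{H}|$ for every $x$. (That this value is actually attained, and not merely a lower bound, is already visible in the classical regular semisimple case, where $\Hess{s}{H}$ is smooth of pure dimension $|\HR{H}|$.)

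The difficult direction is the matching upper bound together with irreducibility, and for this I would build an affine paving of $\Hess{x}{H}$ adapted to the Jordan type of $x$. Write $x=s+n$ with $s$ semisimple, $n$ nilpotent, $[s,n]=0$; after conjugating we may take $s\in\mathfrak{t}$ and $n\in\mathfrak{n}$, so that $x\in\mathfrak{b}\subseteq H$ and $eB\in\Hess{x}{H}$. Let $L=Z_G(s)$ be the (connected) Levi subgroup with Lie algebra $\mathfrak{l}=\mathfrak{z}_{\mathfrak{g}}(s)$; then $n$ is a regular nilpotent element of $\mathfrak{l}$. The key observation is that the subtorus $S=Z(L)^{\circ}\subseteq T$ acts on $G/B$ by left translation and preserves $\Hess{x}{H}$: for $s'\in S$ one has $\text{Ad}(s'^{-1})x=s+\text{Ad}(s'^{-1})n=s+n=x$, because $S$ centralizes all of $L\ni n$, so the defining condition is unchanged.

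I would then fix a generic cocharacter $\lambda\in X_*(S)$ and apply the Bialynicki--Birula decomposition to the $S$-stable projective variety $\Hess{x}{H}$. Its fixed locus lies in $(G/B)^{S}$, whose connected components are the $L$-orbits $L\cdot wB\cong L/B_L$ indexed by $W_L\backslash W$; on such a component the Hessenberg condition cuts out exactly a regular nilpotent Hessenberg variety in $L/B_L$ for the regular nilpotent $n\in\mathfrak{l}$ and an induced Hessenberg space. Each attracting stratum $\{p\mid \lim_{t\to0}\lambda(t)p\in F\}$ should be an affine-space bundle over its fixed component $F$, so that $\dim\Hess{x}{H}=\max_F(\dim F+\#\{\text{attracting directions}\})$. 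Running an induction on $\dim G$ — with base cases the regular semisimple case ($L=T$, isolated fixed points) and the regular nilpotent case ($s=0$, $L=G$, where $S$ is trivial and no reduction is available) — each fixed component contributes a regular nilpotent Hessenberg variety of the expected dimension in the proper Levi $L$, and a bookkeeping of the attracting directions should single out the unique component (through $eB$) whose stratum is open of dimension exactly $|\HR{H}|$. Openness and uniqueness of this top stratum yield both the dimension formula and irreducibility.

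The main obstacle is twofold. First is the regular nilpotent base case ($s=0$), where no Levi reduction exists: here irreducibility and the dimension $|\HR{H}|$ must be established directly, for instance by exhibiting a dense affine cell from the intersection with the big cell and controlling the closures of the remaining cells — this is the genuinely hard input. Second, because $\Hess{x}{H}$ is singular, one must justify that the Bialynicki--Birula strata are honest affine bundles over the fixed components and, crucially, that exactly one stratum is open and dense, i.e.\ that the top-dimensional stratum is connected. Condition \eqref{eq:containing simple roots}, that $\HR{H}$ contains all simple roots, is precisely what should force this connectedness and the uniqueness of the open cell, so checking that this hypothesis propagates correctly through the Levi induction is where I expect the argument to demand the most care.
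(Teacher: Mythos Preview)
The paper does not supply its own proof of this theorem: it is stated with attribution to Precup \cite{precup16} and used as a black box throughout (for instance in Proposition~\ref{prop: LCI and CM} and Proposition~\ref{prop: zero scheme is reduced}). There is therefore no in-paper argument to compare your proposal against.

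That said, your sketch is broadly in the spirit of Precup's actual proof, which does proceed via an affine paving and a Levi reduction of exactly the kind you describe: one uses the torus $S=Z(Z_G(s))^{\circ}$ to run a Bialynicki--Birula-type decomposition, identifies the fixed components with regular nilpotent Hessenberg varieties in the Levi $L=Z_G(s)$, and counts attracting directions. Your lower bound via upper semicontinuity of fiber dimension on the family $G\times_B H\to\mathfrak{g}$ is correct and standard.

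You have, however, correctly located the real work and left it undone. The regular nilpotent base case is not a detail: it is the substance of the theorem, and your proposal does not address it beyond saying one should ``exhibit a dense affine cell from the intersection with the big cell.'' Precup's argument (building on her earlier paving results \cite{precup13a} and on Tymoczko's work in type $A$) handles this by intersecting $\Hess{N_0}{H}$ with a carefully chosen Schubert-cell filtration and showing each nonempty intersection is an affine space of a specific dimension; condition~\eqref{eq:containing simple roots} enters to guarantee there is a unique top-dimensional cell. Without this input your induction has no base. Similarly, your concern about Bialynicki--Birula on a singular variety is well-placed: the strata are not automatically affine bundles, and Precup circumvents this by working directly with the paving rather than invoking Bialynicki--Birula in its smooth form. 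So your outline is the right shape, but as written it is a reduction to the hard case rather than a proof.
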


Throughout this paper, we fix non-zero vectors $E_{\alpha}\in\mathfrak{g}_{\alpha}$ and $F_{\alpha}\in\mathfrak{g}_{-\alpha}$ for each positive root $\alpha\in\Phi^+$ so that $\mathfrak{g}_{\alpha}=\C E_{\alpha}$ and $\mathfrak{g}_{-\alpha}=\C F_{\alpha}$. 
Recall that $I$ is the Dynkin diagram of the root system $\Phi$. An arbitrary regular element $x\in\mathfrak{g}$ is conjugated to an element of the form (\cite[Proposition 0.4]{Kostant63})
\begin{align}\label{eq:Jordan decomp}
x_J = s_J + n_J
\end{align}
for some $J\subseteq I$ (regarded as a full-subgraph), where $n_J=\sum_{i\in J}E_{\alpha_i}\in\mathfrak{n}$ and the semisimple element $s_J\in\mathfrak{t}$ satisfies
\begin{align}\label{eq:centralizer of s_J}
Z_{\mathfrak{g}}(s_J) = \mathfrak{g}(J) \oplus Z.
\end{align}
Here, $\mathfrak{g}(J)$ is the complex semisimple Lie algebra corresponding to the Dynkin diagram $J$, and $Z\subseteq\mathfrak{t}$ is the center of $Z_{\mathfrak{g}}(s_J)$.

For example, when $J=\emptyset$, the element $x_{\emptyset}(=s_{\emptyset})$ is regular semisimple, and when $J=I$, the element $x_I(=n_I)$ is regular nilpotent.

\vspace{5pt}
\begin{example}\label{eg: type A_2}
\emph{
Let $G=\text{SL}_3(\C)$ and $B$ the subgroup of $\text{SL}_3(\C)$ consisting of upper-triangular matrices. 
We take the standard maximal torus $T\subset G$ whose elements are diagonal matrices.
Let $\alpha_1,\alpha_2$ be the simple roots corresponding to the homomorphisms $t=\text{diag}(t_1,t_2,t_3)(\in T)\mapsto t_1t_2^{-1}(\in\C^{\times})$ and $t=\text{diag}(t_1,t_2,t_3)(\in T)\mapsto t_2t_3^{-1}(\in\C^{\times})$, respectively.
Then we have $\Phi^+=\{\alpha_1,\alpha_2,\alpha_1+\alpha_2\}$, and let
\begin{align*}
E_{\alpha_1}
=
\begin{pmatrix}
0 & 1 & 0  \\
0 & 0 & 0 \\
0 & 0 & 0
\end{pmatrix}
, \ 
E_{\alpha_2}
=
\begin{pmatrix}
0 & 0 & 0  \\
0 & 0 & 1 \\
0 & 0 & 0
\end{pmatrix}
, \ 
E_{\alpha_1+\alpha_2}
=
\begin{pmatrix}
0 & 0 & 1  \\
0 & 0 & 0 \\
0 & 0 & 0
\end{pmatrix}.
\end{align*}
If we take $J=\{1\}\subset\{1,2\}$, then we have
\begin{align}\label{eq: example of regular element}
x_J
=
\begin{pmatrix}
\mu_1 & 1 & 0  \\
0 & \mu_1 & 0 \\
0 & 0 & \mu_2
\end{pmatrix}
=
\begin{pmatrix}
\mu_1 & 0 & 0  \\
0 & \mu_1 & 0 \\
0 & 0 & \mu_2
\end{pmatrix}
+
\begin{pmatrix}
0 & 1 & 0  \\
0 & 0 & 0 \\
0 & 0 & 0
\end{pmatrix}
\end{align}
for some distinct complex numbers $\mu_1\neq \mu_2$.
}
\end{example}

\subsection{Regular Hessenberg varieties as zeros of sections}\label{subsec: zero scheme}
Let $x\in\mathfrak{g}$ be a regular element. Since a Hessenberg space $H\subseteq \mathfrak{g}$ is a $B$-submodule, the quotient space $\mathfrak{g}/H$ admits a $B$-action given by $b\cdot \overline{X}=\overline{\text{Ad}(b)X}$ for $b\in B$ and $X\in\mathfrak{g}$, where $\overline{X}\in\mathfrak{g}/H$ denotes the image of $X$. So we have a vector bundle $G\times_B(\mathfrak{g}/H)$ over $G/B$, which is defined by the quotient of the direct product $G\times(\mathfrak{g}/H)$ by the right $B$-action given by $(g,\overline{X})\cdot b=(gb,\overline{\text{Ad}(b^{-1})X})$ for $(g,\overline{X})\in G\times(\mathfrak{g}/H)$ and $b\in B$. Then the regular element $x$ induces a well-defined section
\begin{align}\label{eq: section of the Hessenberg vector bundle}
\sect{x} \colon G/B \rightarrow G\times_B(\mathfrak{g}/H) 
\quad ; \quad 
gB \mapsto [g,\overline{\text{Ad}(g^{-1})x}].
\end{align}
It is clear that the regular Hessenberg variety $\Hess{x}{H}$ is the zero set $\mathbb{V}(\sect{x})$. Regarding $G/B$ as an integral scheme, we can consider the zero scheme of the section $\sect{x}$ (cf.\ \cite[Appendix B.3.2]{Fulton}). 

\begin{definition}
Let $\mathcal{Z}(x,H)$ be the zero scheme of the section $\sect{x}$ given at \eqref{eq: section of the Hessenberg vector bundle}.
\end{definition}

Set-theoretically $\mathcal{Z}(x,H)$ is the same as the zero set $\mathbb{V}(\sect{x})$, but it carries the multiplicities of the local equations given by the section $\sect{x}$. Since the rank of the vector bundle $G\times_B(\mathfrak{g}/H)$ appearing in \eqref{eq: section of the Hessenberg vector bundle} is equal to $|\Phi^+\setminus\HR{H}|$, Theorem \ref{thm:precup} above shows that the zero scheme $\mathcal{Z}(x,H)$ has the expected codimension. That is, in the ambient space $G/B$, it is locally cut out by $\dim G/B - \dim \mathcal{Z}(x,H)$ functions. Thus, by \cite[Lemma A.7.1]{Fulton}, the following property of $\mathcal{Z}(x,H)$ follows (see \cite[Sect.\ 18.5]{Eisenbud} for the definition of locally complete intersection rings).

\begin{proposition}\label{prop: LCI and CM}
The scheme $\mathcal{Z}(x,H)$ is a local complete intersection, and hence a Cohen-Macaulay scheme.
\end{proposition}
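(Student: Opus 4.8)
The plan is to reduce the statement to a local computation in the structure sheaf of $G/B$ and then quote \cite[Lemma A.7.1]{Fulton}. Write $r := |\Phi^+ \setminus \HR{H}|$ for the rank of the Hessenberg vector bundle $E := G\times_B(\mathfrak{g}/H)$. First I would cover $G/B$ by affine opens $U$ on which $E$ trivializes, say $E|_U \cong U\times\C^r$. Under such a trivialization the section $\sect{x}$ from \eqref{eq: section of the Hessenberg vector bundle} becomes an $r$-tuple of regular functions $f_1,\dots,f_r\in\mathcal{O}(U)$, and by the definition of the zero scheme one has
\[
\mathcal{Z}(x,H)\cap U \;=\; \Spec\!\left(\mathcal{O}(U)/(f_1,\dots,f_r)\right).
\]
Thus $\mathcal{Z}(x,H)$ is locally cut out in the smooth variety $G/B$ by exactly $r$ equations, and the whole question becomes whether these $r$ equations are as independent as their number suggests.

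Second --- and this is the only place where genuine geometric input is needed --- I would invoke Precup's Theorem \ref{thm:precup}. Since condition \eqref{eq:containing simple roots} holds, that theorem gives that $\Hess{x}{H}$ is irreducible of dimension $|\HR{H}| = \dim G/B - r$. As $\mathcal{Z}(x,H)$ has the same support as $\mathbb{V}(\sect{x})=\Hess{x}{H}$, and the dimension of a scheme depends only on its support, we get $\codim\!\left(\mathcal{Z}(x,H),\,G/B\right)=r$ at every point. Fixing a closed point $p\in\mathcal{Z}(x,H)$, the local ring $\mathcal{O}_{G/B,p}$ is regular, hence Cohen-Macaulay, and the ideal $(f_1,\dots,f_r)$ cutting out $\mathcal{Z}(x,H)$ there has height exactly $r$. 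By Krull's height theorem together with the Cohen-Macaulay property, an ideal of height $r$ generated by $r$ elements in a Cohen-Macaulay local ring is generated by a regular sequence; so $f_1,\dots,f_r$ is a regular sequence in $\mathcal{O}_{G/B,p}$, exhibiting $\mathcal{Z}(x,H)$ as a local complete intersection.

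The Cohen-Macaulay conclusion is then automatic: $\mathcal{O}_{\mathcal{Z}(x,H),p}=\mathcal{O}_{G/B,p}/(f_1,\dots,f_r)$ is the quotient of a Cohen-Macaulay local ring by a regular sequence, hence Cohen-Macaulay (\cite[Sect.\ 18.5]{Eisenbud}); more generally, any local complete intersection scheme is Cohen-Macaulay. All of this is precisely the content of \cite[Lemma A.7.1]{Fulton}, so the cleanest write-up simply verifies that the zero scheme attains the expected codimension $r$ equal to $\rank E$ and then quotes the lemma.

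I expect no serious obstacle: the substantial work is the dimension count, which is already supplied by Precup. The one point that must be checked with care is the expected-codimension hypothesis, namely that $\rank E = r$ coincides with $\codim\!\left(\mathcal{Z}(x,H),G/B\right)$. This is exactly where assumption \eqref{eq:containing simple roots}, which guarantees the irreducibility and the dimension formula of Theorem \ref{thm:precup}, is used; it is what rules out $\sect{x}$ vanishing on a locus of too-small codimension, which would break the regular-sequence property and invalidate the argument.
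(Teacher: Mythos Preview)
Your proposal is correct and follows essentially the same route as the paper: the paper also observes that the rank of $G\times_B(\mathfrak{g}/H)$ equals $|\Phi^+\setminus\HR{H}|$, invokes Theorem~\ref{thm:precup} to conclude that $\mathcal{Z}(x,H)$ has the expected codimension, and then cites \cite[Lemma~A.7.1]{Fulton}. Your write-up is more explicit about the underlying commutative algebra (the regular-sequence argument via Krull's height theorem in a Cohen--Macaulay local ring), but this is exactly what Fulton's lemma packages, as you yourself note.
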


In the next section, we will show that the scheme $\mathcal{Z}(x_J,H)$ is reduced, and we will determine local defining ideals of the regular Hessenberg variety $\Hess{x_J}{H}$ in $G/B$ as its corollary.

\bigskip
\section{Defining ideals}\label{sec: defining ideals}
Let $x_J\in\mathfrak{g}$ be a regular element of the form \eqref{eq:Jordan decomp} and $H$ a Hessenberg space satisfying condition \eqref{eq:containing simple roots}. In this section, we will show that the zero scheme $\mathcal{Z}(x_J,H)$ is reduced, and as a corollary we will determine the defining ideals of the regular Hessenberg variety $\Hess{x_J}{H}$ with respect to a standard affine open cover of $G/B$.

We start from constructing a local description of $\mathcal{Z}(x_J,H)$. Let $B^-\subset G$ be the opposite Borel subgroup with respect to $B$ and $U^-\subset B^-$ its unipotent radical. Then $U^-$ is a affine closed subvariety of $G$, and we have an open embedding 
\begin{align*}
i \colon U^- \hookrightarrow  G/B 
\quad ; \quad 
g \mapsto gB.
\end{align*}
For each $w\in W=N(T)/T$, let us fix a representative $\tilde{w}\in N(T)$ such that $w=[\tilde{w}]$ in $W$. We write
\begin{align}\label{eq:def of wU-}
wU^- \coloneqq \{ \tilde{w} gB\in G/B \mid g\in U^- \} \subset G/B.
\end{align}
This is an affine open subvariety of $G/B$ which is isomorphic to $U^-$. Note that the subset $wU^-\subset G/B$ does not depend on a choice of the representative $\tilde{w}$ of $w$ since $U^-$ is invariant under the adjoint action of $T$. It follows that these affine open subvarieties cover $G/B$:
\begin{align}\label{eq: open cover of G/B}
G/B = \bigcup_{w\in W} wU^-.
\end{align}
Using a choice of the representative $\tilde{w}$ of $w$, we have a section $\phi_{\tilde{w}}$ of the projection map $\pr \colon G\rightarrow G/B$ over $wU^-$, given by
\begin{align}\label{eq: embedding of wU^-}
\phi_{\tilde{w}} \colon wU^- \rightarrow  G
\quad ; \quad 
gB \mapsto \tilde{w} \cdot i^{-1}(\tilde{w}^{-1} gB).
\end{align}
In other words, we have
\begin{align}\label{eq: embedding of wU^- 2}
\phi_{\tilde{w}}(\tilde{w} gB)=\tilde{w} g \quad \text{for } g\in U^-
\end{align}
under the presentation \eqref{eq:def of wU-}. Thus, the principal $B$-bundle $p\colon G\rightarrow G/B$ and hence the vector bundle $G\times_B(\mathfrak{g}/H)$ are trivialized over $wU^-$. Namely, the section $\sect{x_J}$ from \eqref{eq: section of the Hessenberg vector bundle} on $wU^-$ is identified with the morphism
\begin{equation*}
\sect{x_J} \colon wU^- \rightarrow wU^-\times (\mathfrak{g}/H)
\quad; \quad 
gB  \mapsto (gB,\overline{\text{Ad}((\phi_{\tilde{w}}(gB))^{-1})x_J}).
\end{equation*}
So we can describe the section $\sect{x_J}$ on $wU^-$ as a collection of regular functions on $wU^-$ via the decomposition
\begin{equation*}
\mathfrak{g}/H \cong \bigoplus_{\alpha\in\Phi^+ \setminus \HR{H}} \mathfrak{g}_{-\alpha}.
\end{equation*}
To do that, recall from Section \ref{sec: regular Hessenberg varieties} that we chose a non-zero vector $F_{\alpha}\in\mathfrak{g}_{-\alpha}$ for each positive root $\alpha\in\Phi^+$ so that $\mathfrak{g}_{-\alpha}=\C F_{\alpha}$, and define a function $\poly{\alpha}{\tilde{w}} \colon wU^-\rightarrow \C F_{\alpha}\cong\C$ by
\begin{align}\label{eq:def of f_alpha}
\poly{\alpha}{\tilde{w}}(gB) \coloneqq (\text{Ad}((\phi_{\tilde{w}}(gB))^{-1})x_J)_{-\alpha}
\quad \text{for } gB\in wU^-, 
\end{align}
where $\phi_{\tilde{w}}$ is the section given in \eqref{eq: embedding of wU^-} and we identify $\C F_{\alpha}\cong \C$ by $tF_{\alpha}\mapsto t$. Then it is clear that the section $\sect{x_J}$ on $wU^-$ is a collection of the functions $\poly{\alpha}{\tilde{w}}$ for $\alpha\in\Phi^+ \setminus \HR{H}$. Namely, the intersection $\Hess{x_J}{H}\cap wU^-$ is the zero set of these functions. To describe the local presentation of the zero scheme $\mathcal{Z}(x_J,H)$ on $wU^-$, we then make the following definition. Let $\C[wU^-]$ be the coordinate ring of the affine variety $wU^-$.

\begin{definition}\label{def:ideal and ring}
For $w\in W$, let
\begin{align*}
\ideal{H}{w} \coloneqq (\poly{\alpha}{\tilde{w}} \mid \alpha\in \Phi^+ \setminus \HR{H} ) \subseteq \C[wU^-]
\end{align*}
be the ideal of $\C[wU^-]$ generated by $\poly{\alpha}{\tilde{w}}$ defined in \eqref{eq:def of f_alpha}, and we set 
\begin{align*}
\ring{H}{w}\coloneqq\C[wU^-]/\ideal{H}{w}.
\end{align*}
\end{definition}

Note that the space of closed points of $\Spec \ring{H}{w}$ is homeomorphic to the intersection $\Hess{x_J}{H}\cap wU^-$. The following is an immediate consequence of the construction of the zero scheme $\mathcal{Z}(x_J,H)$.

\begin{lemma}\label{lem: affine cover for zero}
We have an affine open cover
\begin{align*}
\mathcal{Z}(x_J,H) = \bigcup_{w\in W} \Spec \ring{H}{w}.
\end{align*}
\end{lemma}

The goal of this section is to show that the ring $\ring{H}{w}$ is the coordinate ring of $\Hess{x_J}{H}\cap wU^-$. For this, we need to prove that $\ring{H}{w}$ is reduced (i.e.\ we need to show that $\ideal{H}{w}$ is a radical ideal).

\vspace{10pt}
For the convenience of the argument in what follows, let us make an enumeration of positive roots:
\begin{align}\label{eq:enumeration}
\Phi^+ = \{\beta_1,\ldots,\beta_N\},
\end{align}
where $N\coloneqq|\Phi^+|$ is the number of positive roots. Then we have an isomorphism
\begin{align*}
\C^N \xrightarrow{\sim}  U^-
\quad ; \quad 
(t_1,\ldots,t_N) \mapsto \exp(t_1F_{\beta_1}) \cdots \exp(t_N F_{\beta_N}).
\end{align*}
To prove that the ring $\ring{H}{w}$ is reduced for all $w\in W$, we first focus on the case for which $w$ is the longest element $w_0=[\tilde{w}_0]\in W=N(T)/T$, where $\tilde{w}_0\in N(T)$ is the chosen representative to construct the section given in \eqref{eq: embedding of wU^-}. The above coordinate system on $U^-$ induces a coordinate system on $w_0U^-$ by
\begin{align*}
\C^N \xrightarrow{\sim}  w_0U^-
\quad ; \quad 
(t_1,\ldots,t_N) \mapsto \tilde{w}_0\exp(t_1F_{\beta_1}) \cdots \exp(t_N F_{\beta_N})B.
\end{align*}
In particular, the coordinate ring $\C[w_0U^-]$ of $w_0U^-$ is a polynomial ring over $\C$ with $N$ variables.

\vspace{10pt}
Now the definition \eqref{eq:def of f_alpha} applied to $\poly{\alpha}{\tilde{w}_0} $ together with \eqref{eq: embedding of wU^- 2} shows that $\poly{\alpha}{\tilde{w}_0}$ is a function on $w_0U^-$ which sends an element $\tilde{w}_0\exp(t_1F_{\beta_1}) \cdots \exp(t_N F_{\beta_N})B\in w_0U^-$ to 
\begin{align}\label{eq: starting form of fw0}
(\text{Ad}((\tilde{w}_0\exp(t_1F_{\beta_1}) \cdots \exp(t_N F_{\beta_N}))^{-1})x_J)_{-\alpha}.
\end{align}

\vspace{5pt}
\begin{example}
\emph{
Let $G=\text{SL}_3(\C)$, and we use the same notations from Example \ref{eg: type A_2}. The Weyl group $W$ in this case is isomorphic to the permutation group $\mathfrak{S}_3$ on three letters. As for the representative of the longest element $w_0\in\mathfrak{S}_3$ in $\text{SL}_3(\C)$, one can take 
\begin{align*}
\tilde{w}_0
=
\begin{pmatrix}
0 & 0 & 1  \\
0 & 1 & 0 \\
-1 & 0 & 0
\end{pmatrix}
\in \text{SL}_3(\C).
\end{align*}
Let us use the enumeration of $\Phi^+=\{\alpha_1,\alpha_2,\alpha_1+\alpha_2\}$ given by
\begin{align*}
\beta_1=\alpha_1, \ \beta_2=\alpha_2, \ \beta_3=\alpha_1+\alpha_2.
\end{align*}
Then the above coordinate system on $w_0U^-$ is described as follows:
\begin{align*}
w_0U^- =
\left\{
\left.
\begin{pmatrix}
t_3 & t_2 & 1  \\
t_1 & 1 & 0 \\
-1 & 0 & 0
\end{pmatrix}
B\in \text{SL}_3(\C)/B \ 
\right| \
t_1,t_2,t_3\in\C
\right\}.
\end{align*}
Let $x_J$ be a regular element of $\mathfrak{sl}_3(\C)$ given by \eqref{eq: example of regular element}. Then the function $\poly{\alpha_1+\alpha_2}{\tilde{w}_0}$ as a polynomial in $t_1, t_2, t_3$ is given by
\begin{align*}
\poly{\alpha_1+\alpha_2}{\tilde{w}_0}(t_1,t_2,t_3) 
&=\left(
\begin{pmatrix}
t_3 & t_2 & 1  \\
t_1 & 1 & 0 \\
-1 & 0 & 0
\end{pmatrix}^{-1}
\begin{pmatrix}
\mu_1 & 1 & 0  \\
0 & \mu_1 & 0 \\
0 & 0 & \mu_2
\end{pmatrix}
\begin{pmatrix}
t_3 & t_2 & 1  \\
t_1 & 1 & 0 \\
-1 & 0 & 0
\end{pmatrix}
\right)_{3,1} \\
&
= (\mu_1-\mu_2)t_3 + t_1 - (\mu_1-\mu_2)t_1t_2.
\end{align*}
}
\end{example}

\vspace{10pt}
\begin{lemma}\label{lem:w_0 is polynomial ring}
The ring $\ring{H}{w_0}$ is isomorphic to a polynomial ring over $\C$, and hence it is reduced.
\end{lemma}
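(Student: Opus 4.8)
The plan is to produce an explicit polynomial change of coordinates on $w_0U^-\cong\C^N$ that carries the generators $\poly{\alpha}{\tilde{w}_0}$ ($\alpha\in\Phi^+\setminus\HR{H}$) to a subset of the coordinate functions; then $\ring{H}{w_0}$ is visibly the polynomial ring in the remaining coordinates. First I would conjugate. Writing a point of $w_0U^-$ as $\tilde{w}_0 u B$ with $u=\exp(t_1F_{\beta_1})\cdots\exp(t_NF_{\beta_N})$, we have $\text{Ad}((\tilde{w}_0u)^{-1})x_J=\text{Ad}(u^{-1})Y$ where $Y\coloneqq\text{Ad}(\tilde{w}_0^{-1})x_J=s'+n'$, with $s'\coloneqq\text{Ad}(\tilde{w}_0^{-1})s_J\in\mathfrak{t}$ and $n'\coloneqq\sum_{i\in J}\text{Ad}(\tilde{w}_0^{-1})E_{\alpha_i}$. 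Since $-w_0$ permutes the simple roots, each $\text{Ad}(\tilde{w}_0^{-1})E_{\alpha_i}$ is a nonzero multiple of $F_{\gamma_i}$ with $\gamma_i\coloneqq-w_0\alpha_i\in\Delta$, so $n'\in\bigoplus_i\C F_{\gamma_i}\subseteq\mathfrak{n}^-$ and $Y\in\mathfrak{b}^-=\mathfrak{t}\oplus\mathfrak{n}^-$. As $\text{Ad}(u^{-1})$ preserves $\mathfrak{b}^-$, every $\poly{\alpha}{\tilde{w}_0}$ is an honest polynomial in $t_1,\dots,t_N$, which I will analyze through the expansion $\text{Ad}(u^{-1})Y=\prod_{k}\exp(-t_k\,\text{ad}\,F_{\beta_k})\,Y$.

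Second, I would extract a triangular structure from the height grading. Grading $\mathfrak{g}_{-\mu}$ by $\height(\mu)$ and $\mathfrak{t}$ by $0$, the operator $\text{ad}\,F_{\beta_k}$ raises this grading by $\height(\beta_k)$; collecting the monomials that land in $\mathfrak{g}_{-\alpha}$ shows that every monomial of $\poly{\alpha}{\tilde{w}_0}$ is a product $\prod_j t_{k_j}$ with $\sum_j\beta_{k_j}$ equal to $\alpha$ (contributions from $s'$) or to $\alpha-\gamma_i$ (contributions from $n'$). In particular every variable occurring in $\poly{\alpha}{\tilde{w}_0}$ has height $\le\height(\alpha)$, the constant term is nonzero only at the simple roots $\alpha=\gamma_i\in\HR{H}$, and — since a positive root equal to a sum of positive roots one of whose summands is itself must be that single summand — the variable $t_{k(\alpha)}$ (where $\beta_{k(\alpha)}=\alpha$) occurs in $\poly{\alpha}{\tilde{w}_0}$ only through the single linear term $-\alpha(s')\,t_{k(\alpha)}$. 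Thus, for $\alpha$ not simple,
\[
\poly{\alpha}{\tilde{w}_0}=-\alpha(s')\,t_{k(\alpha)}+\big(\text{terms in the }t_{k'}\text{ with }\height(\beta_{k'})<\height(\alpha)\big).
\]

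Third, I would run a triangular elimination, processing the roots $\alpha\in\Phi^+\setminus\HR{H}$ in order of increasing height. Recall that $\alpha(s')=0$ exactly when $\alpha$ lies in the sub-root-system $\Phi(J')$ spanned by $\{\gamma_i\}$. For $\alpha\notin\Phi(J')$ the coefficient $-\alpha(s')$ is nonzero, so $\poly{\alpha}{\tilde{w}_0}$ can be solved for $t_{k(\alpha)}$ in terms of strictly-lower-height variables. For $\alpha\in\Phi(J')$ this term disappears and the leading behaviour must be read off the regular nilpotent $n'$: its degree-one part is $\sum_{i:\,\alpha-\gamma_i\in\Phi^+}d_{\alpha,i}\,t_{k(\alpha-\gamma_i)}$, with $d_{\alpha,i}$ a nonzero multiple of the structure constant of $[F_{\alpha-\gamma_i},F_{\gamma_i}]$, supported on variables of height $\height(\alpha)-1$. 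The aim is to assign to each such $\alpha$ a pivot variable $t_{\iota(\alpha)}$ with $\iota$ injective and disjoint from the indices $k(\alpha)$ used in the semisimple directions (automatic here, since the two pivot sets sit over $\Phi(J')$-roots and over non-$\Phi(J')$-roots respectively). Granting this, the functions $\{\poly{\alpha}{\tilde{w}_0}\}_{\alpha\in\Phi^+\setminus\HR{H}}$ together with $\{t_k: k\notin\operatorname{im}\iota\}$ form a coordinate system on $\C^N$, whence $\ring{H}{w_0}\cong\C[t_k:k\notin\operatorname{im}\iota]$ is a polynomial ring.

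The main obstacle is precisely this assignment for $\alpha\in\Phi(J')$: there the linear term in $t_{k(\alpha)}$ vanishes and the pivot must be produced from the regular nilpotent part $n'$ inside the Levi $\mathfrak{g}(J')$, in arbitrary Lie type. This is the general-type incarnation of the triangularity proved in type $A$ in \cite[Theorem 6]{InskoYong} and \cite[Proposition 3.5]{ab-de-ga-ha}, and I expect it to require a separate combinatorial lemma: for a regular nilpotent in a semisimple Lie algebra one must choose, for every non-simple positive root $\alpha$, a simple root $\gamma$ with $\alpha-\gamma\in\Phi^+$ and nonzero bracket, so that $\alpha\mapsto\alpha-\gamma$ is injective and compatible with the height ordering already used for the semisimple directions. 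Once the pivots are chosen injectively and the height grading guarantees that each equation involves only earlier pivots, iterative back-substitution expresses every eliminated variable as a polynomial in the free ones, identifying $\Spec\ring{H}{w_0}$ with an affine space and proving the claim.
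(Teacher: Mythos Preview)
Your proposal is correct and takes essentially the same approach as the paper. The combinatorial lemma you identify as the main obstacle --- an injective choice $\alpha\mapsto\alpha-\gamma$ (with $\gamma$ simple and $\alpha-\gamma\in\Phi^+$) for each non-simple positive root --- is precisely Lemma~\ref{lem:injective map} in the paper, and your two-case split by membership in $\Phi(J')$ (with the automatic disjointness of the two pivot sets) matches the paper's Cases~1 and~2 verbatim.
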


To give a proof of this Lemma, we use the following property of the partial order $\preceq$ on $\Phi^+$ which one can prove by examining each irreducible root system (cf.\ \cite[Sect.\ 3.20]{Humphreys90}). Recall that we write $\alpha \preceq \beta$ for $\alpha,\beta\in\Phi^+$ when $\alpha\in\beta-\textstyle{\sum_{i\in \Dyn}}\Z_{\geq0}\alpha_i$.

\begin{lemma}\label{lem:injective map}
There exists an injective map $\iota \colon \{\alpha\in\Phi^+\mid \height(\alpha)\geq2\} \hookrightarrow \Phi^+$ such that $\iota(\alpha)\prec\alpha$ and $\height(\iota(\alpha))=\height(\alpha)-1$ for each $\alpha\in\Phi^+$ with $\height(\alpha)\geq2$.
\end{lemma}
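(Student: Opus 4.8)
The plan is to reduce the statement to a purely combinatorial matching problem in the root poset and then solve that problem uniformly with the principal $\mathfrak{sl}_2$-triple. Since $\height(\iota(\alpha))=\height(\alpha)-1$ and $\iota(\alpha)\prec\alpha$, the two conditions force $\alpha-\iota(\alpha)$ to be a single simple root: it is a nonzero element of $\sum_i\Z_{\geq0}\alpha_i$ of height $1$. So what I must produce is, for each $\alpha$ with $\height(\alpha)\geq2$, a choice of simple root $\alpha_i$ with $\alpha-\alpha_i\in\Phi^+$, made so that $\alpha\mapsto\alpha-\alpha_i$ is globally injective. Writing $R_k\coloneqq\{\alpha\in\Phi^+\mid \height(\alpha)=k\}$, it suffices to build an injection $\iota_k\colon R_k\hookrightarrow R_{k-1}$ of this form for each $k\geq2$ and take the union: the images lie in distinct height strata, so injectivity of the union is automatic.

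Rather than examine each irreducible type (the route the paper indicates), I would argue uniformly. Let $h\coloneqq 2\rho^\vee$, so that $\operatorname{ad}h$ acts on $\mathfrak{g}_\alpha$ as multiplication by $2\height(\alpha)$, and complete $e\coloneqq\sum_{i}E_{\alpha_i}$ to a principal $\mathfrak{sl}_2$-triple $(e,h,f)$ with $f=\sum_i a_i F_{\alpha_i}$. Decomposing $\mathfrak{g}$ into irreducible modules for this $\mathfrak{sl}_2$ and recalling that the lowering operator $f$ is injective on every weight space of strictly positive weight, I obtain that for each $k\geq2$ the map
\[
\operatorname{ad}f\colon \bigoplus_{\alpha\in R_k}\mathfrak{g}_\alpha \longrightarrow \bigoplus_{\beta\in R_{k-1}}\mathfrak{g}_\beta
\]
is injective, since every weight occurring on the left equals $2k>0$.

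Now I read a matching off this linear map. Fix root vectors $E_\gamma\in\mathfrak{g}_\gamma$, and let $M_k$ be the matrix of the above $\operatorname{ad}f$ in these bases, with columns indexed by $R_k$ and rows by $R_{k-1}$. Since $[F_{\alpha_i},E_\alpha]\in\mathfrak{g}_{\alpha-\alpha_i}$, the entry $M_k[\beta,\alpha]$ can be nonzero only when $\alpha-\beta$ is a simple root. Injectivity says $M_k$ has full column rank $|R_k|$, hence a nonvanishing maximal minor; in the determinant of the corresponding $|R_k|\times|R_k|$ submatrix some permutation contributes a nonzero product, and this permutation is exactly an injection $\iota_k\colon R_k\hookrightarrow R_{k-1}$ with $M_k[\iota_k(\alpha),\alpha]\neq0$ for all $\alpha$. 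By the previous remark $\alpha-\iota_k(\alpha)$ is then simple, so $\iota_k(\alpha)\prec\alpha$ and $\height(\iota_k(\alpha))=k-1$, as required.

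The one genuinely nontrivial ingredient is the injectivity of $\operatorname{ad}f$ on each positive graded piece; this is where the existence of the principal $\mathfrak{sl}_2$-triple and the identity $\operatorname{ad}h|_{\mathfrak{g}_\alpha}=2\height(\alpha)$ do the real work, and it is the step I would be most careful to justify. Everything afterwards — that a full-column-rank matrix has a transversal of nonzero entries, and that such entries force a simple-root difference — is formal. Should one wish to avoid Lie theory entirely, the alternative is the paper's route: verify Hall's condition $|N(S)|\geq|S|$ for the bipartite graph on $R_k\sqcup R_{k-1}$ whose edges record differences by a simple root, checked type by type, where the obstacle becomes instead the bookkeeping in the large exceptional types $E_7,E_8$.
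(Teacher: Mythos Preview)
Your proof is correct and takes a genuinely different route from the paper. The paper does not give an argument at all beyond ``by examining each irreducible root system,'' i.e.\ a type-by-type verification (essentially the Hall-condition check you mention at the end). Your argument is uniform: you use the principal $\mathfrak{sl}_2$-triple to turn the combinatorial matching into the linear-algebra fact that $\operatorname{ad}f$ is injective on strictly positive $h$-weight spaces, and then extract a system of distinct representatives from a nonvanishing maximal minor. The key identifications---that $h=2\rho^\vee$ grades $\mathfrak{g}$ by twice the height, that $f$ lies in $\bigoplus_i\mathfrak{g}_{-\alpha_i}$ so the matrix $M_k$ is supported on pairs differing by a simple root, and that $\ker(\operatorname{ad}f)$ meets only nonpositive weights---are all standard and correctly invoked. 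What your approach buys is a conceptual, classification-free proof; what the paper's route buys is that it requires no Lie theory beyond the root poset itself, at the cost of bookkeeping in types $E_7,E_8,F_4$. Either is acceptable for this lemma, but yours is the cleaner argument.
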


We now prove Lemma \ref{lem:w_0 is polynomial ring}.

\begin{proof}[Proof of Lemma~$\ref{lem:w_0 is polynomial ring}$]
Let $\alpha\in\Phi^+ \setminus \HR{H}$. From \eqref{eq: starting form of fw0}, we have that
\begin{align}\notag
&\poly{\alpha}{\tilde{w}_0} (\tilde{w}_0\exp(t_1F_{\beta_1}) \cdots \exp(t_N F_{\beta_N})B) \\ \notag
&\quad=
(\text{Ad}((\tilde{w}_0\exp(t_1F_{\beta_1}) \cdots \exp(t_NF_{\beta_N}))^{-1})x_J)_{-\alpha} \\ \notag
&\quad=
(\text{Ad}(\exp(-t_NF_{\beta_N}) \cdots \exp(-t_1F_{\beta_1}))(\text{Ad}(\tilde{w}_0^{-1})x_J))_{-\alpha} \\ \label{eq:computation of poly}
&\quad=
(\exp(\text{ad}(-t_NF_{\beta_N})) \cdots \exp(\text{ad}(-t_1F_{\beta_1}))(\text{Ad}(\tilde{w}_0^{-1})x_J))_{-\alpha}.
\end{align}
To deal with the adjoint action of $\tilde{w}_0$, we define $i^*\in[n]$ for each $i\in[n]$ to be a positive integer satisfying $w_0^{-1}\alpha_i=-\alpha_{i^*}$. Recall from \eqref{eq:Jordan decomp} that $x_J$ appearing in \eqref{eq:computation of poly} is written as $x_J=s_J + \sum_{i\in J} E_{\alpha_i}$. So let us write
\begin{align*}
s'_{J}\coloneqq\text{Ad}(\tilde{w}_0^{-1})s_J\in\mathfrak{t} \quad \text{and} \quad  
F'_{\alpha_{i^*}}\coloneqq \text{Ad}(\tilde{w}_0^{-1})E_{\alpha_i}\in\mathfrak{g}_{-\alpha_{i^*}}.
\end{align*}
By expanding the exponentials above, the last expression can be written as
\begin{equation}\label{eq:expansion of f_alpha}
\begin{split}
&\sum_{\substack{a_N,\ldots,a_1\geq 0 ; \\ a_1\beta_1+\cdots+a_N\beta_N=\alpha}}
\frac{(-t_N)^{a_N}}{a_N!} \cdots \frac{(-t_1)^{a_1}}{a_1!}
\text{ad}(F_{\beta_N})^{a_N} \cdots \text{ad}(F_{\beta_1})^{a_1} 
(s'_J) \\
&+
\sum_{i\in J}\sum_{\substack{a_N,\ldots,a_1\geq 0 ; \\ a_1\beta_1+\cdots+a_N\beta_N+\alpha_{i^*}=\alpha}}
\frac{(-t_N)^{a_N}}{a_N!} \cdots \frac{(-t_1)^{a_1}}{a_1!}
\text{ad}(F_{\beta_N})^{a_N} \cdots \text{ad}(F_{\beta_1})^{a_1} 
(F'_{\alpha_{i^*}}).
\end{split}
\end{equation}
This means that $\poly{\alpha}{\tilde{w}_0}$ is a polynomial in variables $t_{\ell}$ for which $\beta_{\ell}\preceq \alpha$. Let $1\leq k\leq N$ be an integer such that $\alpha=\beta_k$. Namely, $\alpha$ is the $k$-th one in the enumeration \eqref{eq:enumeration}. We now take cases. To do that, let us denote by $J^*$ the full-subgraph of $I$ whose vertex set is $\{j^*\in[n]\mid j\in J\}$. We regard $J^*$ as a Dynkin diagram, and we mean by $\Phi(J^*)$ the set of roots of $\mathfrak{g}(J^*)$ which is the complex semisimple Lie algebra corresponding to the Dynkin diagram $J^*$.\\

\noindent
\textbf{Case 1:} $\alpha \notin \Phi(J^*)$.
Observe that the term $(-t_k)\text{ad}(F_{\alpha})(s'_J)$ appears in the first summation in \eqref{eq:expansion of f_alpha}. We have
\begin{align*}
Z_{\mathfrak{g}}(s'_{J})
=
\mathfrak{g}(J^*) \oplus Z'
\end{align*} 
by \eqref{eq:centralizer of s_J}, where $Z'\subseteq\mathfrak{t}$ is the center of $Z_{\mathfrak{g}}(s'_{J})$. Because of the assumption of this case, this shows that $F_{\alpha}\notin Z_{\mathfrak{g}}(s'_{J})$. Thus, the term $(-t_k)\text{ad}(F_{\alpha})(s'_J)$ is non-zero. This means that 
\begin{align*}
\poly{\alpha}{\tilde{w}_0} \in \C^{\times}t_k + (\text{a polynomial in variables $t_{\ell}$ for $1 \leq \ell \leq N$ such that $\beta_{\ell}\prec\alpha$}).
\end{align*} 

\vspace{10pt}
\noindent
\textbf{Case 2:} $\alpha \in \Phi(J^*)$.
In this case, we have $F_{\alpha}\in Z_{\mathfrak{g}}(s'_{J})$, and hence $\poly{\alpha}{\tilde{w}_0}$ is a polynomial in variables $t_{\ell}$ for $1 \leq \ell \leq N$ such that $\beta_{\ell}\prec\alpha$. Observe that $\height(\alpha)\geq 2$ since we have $\alpha\in\Phi^+\setminus\HR{H}$ and $\HR{H}$ contains all the simple roots $\Delta$. So $\iota(\alpha)$ is defined, where $\iota$ is an injective map given in Lemma \ref{lem:injective map}, and we have $\iota(\alpha)\prec\alpha$ and $\height(\iota(\alpha))=\height(\alpha)-1$. So let $\alpha_{j^*}\in\Delta$ be the simple root satisfying $\iota(\alpha)=\alpha-\alpha_{j^*}$. Then we have
\begin{align}\label{eq:beta_ell and alpha_j}
\iota(\alpha) , \ 
\alpha_{j^*} \in \Phi(J^*)
\end{align} 
since $\alpha = \iota(\alpha) + \alpha_{j^\ast}$ and $\alpha$ is a non-negative sum of simple roots contained in $J^*$. Note that $\alpha_{j^*}\neq\iota(\alpha)$ since if the equality holds then we get $\alpha=2\alpha_{j^*}$ which is impossible. The map $\iota$ induces an injective map
\begin{align*}
\indshift \colon \{\ell \mid 1\leq \ell \leq N, \ \height(\beta_{\ell})\geq2\} \hookrightarrow \{1,2,\ldots,N\}
\end{align*}
by claiming $\iota(\beta_{\ell})=\beta_{\indshift(\ell)}$. Then we have $\iota(\alpha)=\beta_{\indshift(k)}$ since $\alpha=\beta_k$. Write $\alpha_{j^*}=\beta_{p}$.

First assume $\indshift(k)<p$. Namely, we assume that $\iota(\alpha)$ appears before $\alpha_{j^*}$ in the enumeration \eqref{eq:enumeration}. We then have 
\begin{equation}\label{eq:case 2 f_alpha}
\begin{split}
&\poly{\alpha}{\tilde{w}_0} (\tilde{w}_0\exp(t_1F_{\beta_1}) \cdots \exp(t_N F_{\beta_N})B) \\
&\quad= 
(-t_{p})(-t_{\indshift(k)})\text{ad}(F_{\alpha_{j^*}})\text{ad}(F_{\iota(\alpha)})(s'_J) 
+
(-t_{\indshift(k)})\text{ad}(F_{\iota(\alpha)})(F'_{\alpha_{j^*}}) \\
&\qquad\qquad +
(\text{a polynomial in variables $t_{\ell}$ for which 
$\beta_{\ell}\prec\alpha$ and $\beta_{\ell}\neq\iota(\alpha)$}).
\end{split}
\end{equation} 
Because of \eqref{eq:beta_ell and alpha_j}, we have $F_{\iota(\alpha)}\in Z_{\mathfrak{g}}(s'_J)$. So we get $\text{ad}(F_{\iota(\alpha)})(s'_J)=0$, which shows that the first summand in \eqref{eq:case 2 f_alpha} vanishes. Since we have $[\C F_{\iota(\alpha)}, \C F'_{\alpha_{j^*}}]=\C F_{\alpha}$, the second summand in \eqref{eq:case 2 f_alpha} satisfies 
\begin{align*}
(-t_{\indshift(k)})\text{ad}(F_{\iota(\alpha)})(F'_{\alpha_{j^*}})
\in \C^{\times} t_{\indshift(k)}.
\end{align*} 
This means that
\begin{align*}
\poly{\alpha}{\tilde{w}_0} \in \C^{\times}t_{\indshift(k)} + (\text{a polynomial in variables $t_{\ell}$ for which 
$\beta_{\ell}\prec\alpha$ and $\beta_{\ell}\neq\iota(\alpha)$}).
\end{align*} 
An argument similar to that used here works for the case $\indshift(k)>p$ as well because of \eqref{eq:beta_ell and alpha_j}, and we obtain the same conclusion.

\vspace{10pt}
Now we prove that the ring $\ring{H}{w_0}=\C[w_0U^-]/\ideal{H}{w_0}$ is isomorphic to a polynomial ring over $\C$. We argued above that if $\alpha$ is in Case 2 then $\iota(\alpha)$ is also in Case 2; in particular $\iota(\alpha)$ cannot be in Case 1. Combining with the injectivity of the map $\iota$, the conclusions of the above two cases show that we can inductively eliminate variables without imposing non-trivial relations among the rest variables. More precisely, let 
\begin{align*}
D = \{k \mid \text{$\beta_k$ belongs to Case 1} \} \cup \{\indshift(k) \mid \text{$\beta_k$ belongs to Case 2}\}.
\end{align*}
Then, the ring $\ring{H}{w_0}=\C[w_0U^-]/\ideal{H}{w_0}$ is isomorphic to the polynomial ring over $\C$ with variables $t_{\ell}$ for $\ell\in\{1,2,\ldots,N\}\setminus D$.
\end{proof}

\vspace{10pt}
\begin{proposition}\label{prop: zero scheme is reduced}
The zero scheme $\mathcal{Z}(x_J,H)$ is reduced, hence an integral scheme.
\end{proposition}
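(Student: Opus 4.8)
The plan is to deduce global reducedness from the reducedness of a single well-chosen chart by invoking Serre's criterion together with the Cohen--Macaulay property already established. Recall that a Noetherian scheme is reduced if and only if it satisfies Serre's conditions $(R_0)$ and $(S_1)$. Proposition \ref{prop: LCI and CM} tells us that $\mathcal{Z}(x_J,H)$ is Cohen--Macaulay, and Cohen--Macaulay schemes satisfy $(S_k)$ for every $k$; in particular $(S_1)$ holds automatically and carries no geometric content left to check. Thus the whole problem collapses to verifying $(R_0)$, i.e.\ reducedness at the generic points of the irreducible components.

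Here I would use that the underlying topological space of $\mathcal{Z}(x_J,H)$ coincides set-theoretically with $\Hess{x_J}{H}$, which is irreducible by Theorem \ref{thm:precup}. Hence $\mathcal{Z}(x_J,H)$ has a unique generic point $\eta$, and $(R_0)$ amounts to the single assertion that the local ring $\mathcal{O}_{\mathcal{Z}(x_J,H),\eta}$ is a field. To see this, consider the affine chart $\Spec \ring{H}{w_0}$ from the cover of Lemma \ref{lem: affine cover for zero}. By Lemma \ref{lem:w_0 is polynomial ring} the ring $\ring{H}{w_0}$ is a polynomial ring over $\C$, hence an integral domain, so this chart is non-empty and integral. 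Being a non-empty open subset of the irreducible space $\mathcal{Z}(x_J,H)$, it is dense and therefore contains $\eta$. Consequently $\mathcal{O}_{\mathcal{Z}(x_J,H),\eta}$ is the localization of $\ring{H}{w_0}$ at its generic point, namely the fraction field of $\ring{H}{w_0}$, which is a field. This establishes $(R_0)$.

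Combining $(R_0)$ and $(S_1)$, Serre's criterion gives that $\mathcal{Z}(x_J,H)$ is reduced. Since a reduced scheme whose underlying space is irreducible is integral, and irreducibility is supplied by Theorem \ref{thm:precup}, we conclude that $\mathcal{Z}(x_J,H)$ is integral as claimed.

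I do not expect a serious obstacle: all the real work has been front-loaded into Proposition \ref{prop: LCI and CM} (the complete-intersection/Cohen--Macaulay input) and Lemma \ref{lem:w_0 is polynomial ring} (the explicit reduced chart). The only point requiring care is the logical packaging --- recognizing that Cohen--Macaulayness makes $(S_1)$ free, that irreducibility collapses $(R_0)$ to a statement about a single generic point, and that one dense reduced chart suffices to control that generic point. An alternative, more pedestrian route would be to show each $\ring{H}{w}$ is reduced directly, mimicking the computation for $w_0$; but that is unnecessary once the $(R_0)+(S_1)$ mechanism is in place.
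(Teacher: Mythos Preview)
Your proposal is correct and follows essentially the same approach as the paper: both combine the Cohen--Macaulay property of Proposition~\ref{prop: LCI and CM}, the irreducibility from Theorem~\ref{thm:precup}, and the reduced chart of Lemma~\ref{lem:w_0 is polynomial ring} to conclude reducedness. The only difference is packaging---you spell out Serre's $(R_0)+(S_1)$ criterion explicitly, whereas the paper phrases the same implication as ``Cohen--Macaulay and generically reduced implies reduced'' with a reference to \cite[Exercise 18.9]{Eisenbud}.
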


\begin{proof}
We know from Theorem \ref{thm:precup} and Proposition \ref{prop: LCI and CM} that the scheme $\mathcal{Z}(x_J,H)$ is irreducible and Cohen-Macaulay. Also, Lemma \ref{lem:w_0 is polynomial ring} shows that $\mathcal{Z}(x_J,H)$ is generically reduced, i.e.\ it has a reduced point. Since $\mathcal{Z}(x_J,H)$ is Cohen-Macaulay, this shows that $\mathcal{Z}(x_J,H)$ is in fact reduced (\cite[Exercise 18.9]{Eisenbud}).
\end{proof}

\vspace{10pt}
Now Lemma \ref{lem: affine cover for zero} implies the following which can be thought of as a generalization of \cite[Theorem 6]{InskoYong} and \cite[Proposition 3.5]{ab-de-ga-ha}.
\begin{corollary}\label{prop: A is the coordinate ring}
Let $w\in W$ such that $\Hess{x_J}{H}\cap wU^-\neq\emptyset$. Then the ring $\ring{H}{w}$ is an integral domain.
In particular, $\ring{H}{w}$ is the coordinate ring of $\Hess{x_J}{H}\cap wU^-$.
\end{corollary}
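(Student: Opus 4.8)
The plan is to derive the statement directly from Proposition \ref{prop: zero scheme is reduced} and Lemma \ref{lem: affine cover for zero}, so that no new computation is required. First I would record that $\mathcal{Z}(x_J,H)$ is an \emph{integral} scheme: it is irreducible by Theorem \ref{thm:precup} and reduced by Proposition \ref{prop: zero scheme is reduced}. By Lemma \ref{lem: affine cover for zero}, the affine scheme $\Spec \ring{H}{w}$ is an open subscheme of $\mathcal{Z}(x_J,H)$, and the hypothesis $\Hess{x_J}{H}\cap wU^-\neq\emptyset$, together with the homeomorphism between the closed points of $\Spec \ring{H}{w}$ and $\Hess{x_J}{H}\cap wU^-$ noted after Definition \ref{def:ideal and ring}, guarantees that this open subscheme is nonempty.

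Next I would invoke the standard fact that a nonempty open subscheme of an integral scheme is again integral: irreducibility is inherited by nonempty open subsets, and reducedness is a local, hence open-inherited, property. Applying this to $\Spec \ring{H}{w}\subseteq\mathcal{Z}(x_J,H)$ shows that $\Spec \ring{H}{w}$ is integral, which is precisely the assertion that $\ring{H}{w}$ is an integral domain. For the final claim, I would observe that reducedness of $\ring{H}{w}=\C[wU^-]/\ideal{H}{w}$ means exactly that $\ideal{H}{w}$ is a radical ideal; since the underlying space of $\Spec \ring{H}{w}$ is $\Hess{x_J}{H}\cap wU^-$ and the induced reduced structure on a closed subvariety of $wU^-$ is obtained by quotienting $\C[wU^-]$ by the radical of its defining ideal, the ring $\ring{H}{w}$ coincides with the coordinate ring of $\Hess{x_J}{H}\cap wU^-$.

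I do not expect a genuine obstacle here: the entire content has been front-loaded into Proposition \ref{prop: zero scheme is reduced}. The only points demanding a little care are the implication \emph{reduced and irreducible} $\Rightarrow$ \emph{integral} for $\mathcal{Z}(x_J,H)$, and the fact that integrality descends to nonempty open subschemes, both of which are formal. If one wishes to avoid scheme-theoretic phrasing altogether, the same conclusion is reached by noting that the reduced ring $\ring{H}{w}$ has a single minimal prime by irreducibility, and a reduced ring with a unique minimal prime is a domain.
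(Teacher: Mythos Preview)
Your proposal is correct and is exactly the argument the paper intends: the corollary is stated immediately after Proposition~\ref{prop: zero scheme is reduced} with the remark that Lemma~\ref{lem: affine cover for zero} implies it, and you have simply spelled out the standard details (integrality of $\mathcal{Z}(x_J,H)$ and its inheritance by nonempty open affines) that the paper leaves implicit.
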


\vspace{10pt}
We also record the following property of $\Hess{x_J}{H}$ (cf.\ \cite[Corollary 7]{InskoYong} and \cite[Theorem 4.1]{ab-de-ga-ha}).
\begin{corollary}\label{cor: LCI}
Arbitrary regular Hessenberg variety $\Hess{x_J}{H}$ is a local complete intersection. In particular, it is Cohen-Macaulay.
\end{corollary}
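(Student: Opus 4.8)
The plan is to deduce this directly from the two structural facts already established for the zero scheme $\ZHess{x_J}{H}$, namely Proposition \ref{prop: LCI and CM} (it is a local complete intersection, hence Cohen--Macaulay) and Proposition \ref{prop: zero scheme is reduced} (it is reduced). First I would reduce to an element of the form $x_J$: by \eqref{eq:Jordan decomp} every regular element is $G$-conjugate to some $x_J$, and conjugating $x$ by $h\in G$ matches $\Hess{x}{H}$ with $\Hess{\mathrm{Ad}(h)x}{H}$ under left translation by $h$ on $G/B$. Since being a local complete intersection and being Cohen--Macaulay are local, isomorphism-invariant properties, it suffices to treat $\Hess{x_J}{H}$.

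The key observation is that the scheme structure of the zero scheme coincides with the reduced variety structure. By Proposition \ref{prop: zero scheme is reduced}, $\ZHess{x_J}{H}$ is reduced; since its underlying topological space is $\Hess{x_J}{H}$, which carries the reduced induced subscheme structure, the two closed subschemes of $G/B$ agree, so $\ZHess{x_J}{H}=\Hess{x_J}{H}$ as schemes. Concretely, Corollary \ref{prop: A is the coordinate ring} already identifies $\ring{H}{w}$ with the coordinate ring of $\Hess{x_J}{H}\cap wU^-$ on each chart of the affine cover of Lemma \ref{lem: affine cover for zero}, which is exactly this identification read off chart by chart.

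Granting this, the conclusion is immediate. Proposition \ref{prop: LCI and CM} states that $\ZHess{x_J}{H}$ is a local complete intersection, so $\Hess{x_J}{H}$ is one as well; and every local complete intersection scheme is Cohen--Macaulay (see \cite[Sect.\ 18.5]{Eisenbud}), which gives the final assertion.

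I expect no genuine obstacle here: all the substantive work—verifying the expected-codimension/LCI property through Theorem \ref{thm:precup} and \cite[Lemma A.7.1]{Fulton}, and establishing reducedness via the Cohen--Macaulay-plus-generically-reduced criterion of Proposition \ref{prop: zero scheme is reduced}—is carried out in the cited results. The only point needing care is the bookkeeping that the reduced zero scheme literally equals the Hessenberg variety as a scheme, which is precisely what Proposition \ref{prop: zero scheme is reduced} together with Corollary \ref{prop: A is the coordinate ring} supply.
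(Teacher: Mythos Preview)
Your proposal is correct and follows essentially the same approach as the paper: combine Proposition \ref{prop: LCI and CM} (the zero scheme $\ZHess{x_J}{H}$ is a local complete intersection) with Proposition \ref{prop: zero scheme is reduced} (it is reduced, hence coincides with the variety $\Hess{x_J}{H}$ as a scheme) to conclude. The paper's proof is just the two-line version of what you wrote; your extra remarks on conjugation and the chart-by-chart identification via Corollary \ref{prop: A is the coordinate ring} are fine but not needed, since the statement is already formulated for $x_J$.
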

\begin{proof}
We know from Proposition \ref{prop: LCI and CM} that the scheme $\mathcal{Z}(x_J,H)$ is a local complete intersection. Now it is a reduced scheme by Proposition \ref{prop: zero scheme is reduced}. Thus, we conclude that the variety $\Hess{x_J}{H}$ is a local complete intersection.
\end{proof}

\vspace{10pt}
Proposition \ref{prop: zero scheme is reduced} also implies the following formula for the classes of regular Hessenberg varieties in the cohomology group $H^*(G/B)$ by \cite[Proposition 14.1 and Example 14.1.1]{Fulton}.
\begin{corollary}\label{cor: cohomology class}
In $H^*(G/B)$, we have
\begin{align*}
[\Hess{x_J}{H}]
= e(G\times_B(\mathfrak{g}/H)),
\end{align*}
where the right hand side is the Euler class\footnote{In the language of Section 4 (see \eqref{eq:def of L_lambda}), we have $e(G\times_B(\mathfrak{g}/H))=\prod_{\alpha\in \Phi^+ \setminus \HR{H}} c_1(L_{\alpha})$, where $c_1(L_{\alpha})$ is the first Chern class of the line bundle $L_{\alpha}$ over $G/B$.} of the vector bundle $G\times_B(\mathfrak{g}/H)$. In particular, the class of $\Hess{x_J}{H}$ in $H^*(G/B)$ does not depend on $J$. 
\end{corollary}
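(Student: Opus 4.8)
The plan is to realize $[\Hess{x_J}{H}]$ as the class of the zero scheme of a section of $G\times_B(\mathfrak{g}/H)$ and then invoke the standard intersection-theoretic identification of such a class with the top Chern (Euler) class of the bundle. All the substantive hypotheses have already been secured upstream: by the construction of Section \ref{subsec: zero scheme} the variety $\Hess{x_J}{H}$ is the zero scheme $\ZHess{x_J}{H}$ of the section $\sect{x_J}$ of $E := G\times_B(\mathfrak{g}/H)$; by Theorem \ref{thm:precup} together with the rank count $\rank E = |\Phi^+\setminus\HR{H}|$ this zero scheme has the expected codimension; and by Proposition \ref{prop: zero scheme is reduced} it is reduced and irreducible.

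First I would invoke the general principle recorded in \cite[Proposition 14.1]{Fulton} and \cite[Example 14.1.1]{Fulton}: for a section $s$ of a rank-$r$ vector bundle $E$ on a smooth variety $X$ whose zero scheme $Z(s)$ has pure codimension $r$, the associated cycle $[Z(s)]$ equals $c_r(E)\cap[X]=e(E)$ in the Chow group $A_*(X)$. The underlying mechanism is that in the expected-codimension case the section cuts out $Z(s)$ locally by a regular sequence of length $r$, whose Koszul complex resolves $\mathcal{O}_{Z(s)}$ and forces the class to refine to the top Chern class. Taking $X=G/B$, $E=G\times_B(\mathfrak{g}/H)$, $r=|\Phi^+\setminus\HR{H}|$ and $s=\sect{x_J}$, the expected-codimension condition verified above yields $[\ZHess{x_J}{H}]=e(E)$.

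Next I would pass from the scheme to the reduced variety and from Chow groups to cohomology. Since $\ZHess{x_J}{H}$ is reduced and irreducible, its associated cycle is $[\Hess{x_J}{H}]$ with multiplicity one, so $[\Hess{x_J}{H}]=e(E)$ in $A_*(G/B)$. Because $G/B$ carries a cellular (Bruhat) decomposition, the cycle map $A_*(G/B)\to H_*(G/B)$ is an isomorphism, and Poincaré duality transports this identity to the asserted equality in $H^*(G/B)$. The independence of $J$ is then immediate: the bundle $E=G\times_B(\mathfrak{g}/H)$ is built from the Hessenberg space $H$ alone and records nothing about the chosen regular element, so neither does $e(E)$.

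I do not expect a serious obstacle, since the geometric heavy lifting sits in Theorem \ref{thm:precup} and Proposition \ref{prop: zero scheme is reduced}. The one point meriting care is the precise role of reducedness: the expected-codimension hypothesis alone identifies the cycle of the zero scheme with $e(E)$, but it would permit a multiplicity greater than one along a non-reduced structure. It is exactly Proposition \ref{prop: zero scheme is reduced}, giving multiplicity one at the generic point, that upgrades the scheme-level identity $[\ZHess{x_J}{H}]=e(E)$ to the clean statement $[\Hess{x_J}{H}]=e(E)$ for the reduced variety.
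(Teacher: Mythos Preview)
Your argument is correct and matches the paper's own proof, which simply cites \cite[Proposition~14.1 and Example~14.1.1]{Fulton} together with Proposition~\ref{prop: zero scheme is reduced}. You have merely spelled out the mechanism in more detail, including the passage from Chow groups to cohomology and the precise role of reducedness in securing multiplicity one.
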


\bigskip
\section{Line bundles associated to codimension 1 sequences}
Let $\lambda \colon T\rightarrow \C^{\times}$ be a weight of the maximal torus $T$ of $G$. By composing this with the canonical projection $B \twoheadrightarrow T$, we obtain a homomorphism $\lambda \colon B\rightarrow \C^{\times}$ which by slight abuse of notation we also denote by $\lambda$. Let $\C_{\lambda}=\C$ be the 1 dimensional representation of $B$ given by $b\cdot z=\lambda(b)z$ for $b\in B$ and $z\in\C$. We denote by $\C_{\lambda}^*$ its dual representation. Since the quotient map $\pr \colon G\rightarrow G/B$ is a principal $B$-bundle, we can take an associated line bundle 
\begin{align}\label{eq:def of L_lambda}
 \LB_{\lambda} = G\times_B \C_{\lambda}^*.
\end{align}
Namely, it is the quotient of the product $G\times \C$ by the right $B$-action given by $(g,z)\cdot b=(gb,\lambda^{-1}(b^{-1})z)=(gb,\lambda(b)z)$ for $b\in B$ and $(g,z)\in G\times \C$. For a subvariety $X$ of $G/B$, we will also denote by $\LB_{\lambda}$ the restriction of $\LB_{\lambda}$ to $X$ by abusing notation.

Let $H'\subset H\subseteq \mathfrak{g}$ be Hessenberg spaces satisfying condition \eqref{eq:containing simple roots} in Section \ref{sec: regular Hessenberg varieties} such that $\dim_{\C} H' = \dim_{\C} H -1$, equivalently $\HR{H'}=\HR{H} \setminus \{\alpha\}$ for some maximal element $\alpha$ of $\HR{H}$ (with respect to the partial order $\preceq$ on $\Phi^+$ defined in Section \ref{sec: regular Hessenberg varieties}). Then we have a sequence of subvarieties
\begin{align*}
 \Hess{x_J}{H'} \subset \Hess{x_J}{H}\subseteq G/B, 
\end{align*}
where $\Hess{x_J}{H'}$ is of codimension 1 in $\Hess{x_J}{H}$ by Theorem \ref{thm:precup}. Since $H/H'$ is a one-dimensional representation of $B$, we have a line bundle $G\times_B(H/H')$ over $G/B$, and we denote its restriction to $\Hess{x_J}{H}$ by $(G\times_B(H/H'))|_{\Hess{x_J}{H}}$. Then the element $x_J$ induces a well-defined section 
\begin{align*}
 \sectseq \colon \Hess{x_J}{H} \rightarrow (G\times_B(H/H'))|_{\Hess{x_J}{H}}
 \quad ; \quad
 gB \mapsto [g,\overline{\text{Ad}(g^{-1})x_J}],
\end{align*}
and we have 
\begin{align*}
 \Hess{x_J}{H'} = \mathbb{V}(\sectseq).
\end{align*}
Now let $\mathcal{Z}$ be the zero scheme of the section $\sectseq$. This is a closed subscheme of $\Hess{x_J}{H}$ of codimension 1, and we have its ideal sheaf $\mathcal{I}$ on $\Hess{x_J}{H}$ which is an invertible sheaf. By the construction of the zero scheme $\mathcal{Z}$, the line bundle\footnote{Precisely, this is the line bundle $\RSpec(\Sym\mathcal{I}^{\vee})$ over $\Hess{x_J}{H}$, where $\mathcal{I}^{\vee}$ is the dual of $\mathcal{I}$ and $\RSpec$ denotes the relative spectrum over $\Hess{x_J}{H}$.} associated to $\mathcal{I}$ over $\Hess{x_J}{H}$ is isomorphic to the dual of the original line bundle $(G\times_B(H/H'))|_{\Hess{x_J}{H}}$ (\cite[Ch. II, Sect.\ 6]{Hartshorne}). The latter is in fact isomorphic to $L_{-\alpha}(=(G\times_B \C_{\alpha})|_{\Hess{x_J}{H}})$ since
the decompositions
\begin{align*}
H' = \mathfrak{b} \oplus \bigoplus_{\alpha\in \HR{H'}}\mathfrak{g}_{-\alpha}, \quad 
H = \mathfrak{b} \oplus \bigoplus_{\alpha\in \HR{H}}\mathfrak{g}_{-\alpha},
\end{align*}
imply that $(H/H')^* \cong \C_{\alpha}$ as $B$-representations. 

From the descriptions given in Section \ref{sec: defining ideals}, it is clear that $\mathcal{Z}$ is the closed subscheme of $\Hess{x_J}{H}$ whose defining ideal is generated by a single function $\poly{\alpha}{\tilde{w}}$ (see \eqref{eq:def of f_alpha}) in each open set $\Hess{x_J}{H}\cap wU^-$. So, by Lemma \ref{lem: affine cover for zero} and Proposition \ref{prop: zero scheme is reduced}, it follows that $\mathcal{Z}$ is isomorphic to $\mathcal{Z}(x_J,H')$ as closed subschemes of $G/B$. Thus, it follows that $\mathcal{Z}$ is a reduced scheme by Proposition \ref{prop: zero scheme is reduced} again. Combining with the above computations, we now obtain the main claim in this section.

\begin{proposition}\label{prop: ideal sheaf}
Let $H'\subset H$ be Hessenberg spaces such that $\HR{H'}=\HR{H} \setminus \{\alpha\}$ as above and $\mathcal{I}$ the ideal sheaf on $\Hess{x_J}{H}$ of the closed subscheme $\Hess{x_J}{H'}\subset \Hess{x_J}{H}$ of codimension $1$.
Then the line bundle associated to $\mathcal{I}$ is isomorphic to $\LB_{-\alpha}$. 
\end{proposition}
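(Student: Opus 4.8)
The plan is to organize the computations already assembled in the discussion preceding the statement into a clean argument resting on the standard correspondence between effective Cartier divisors and line bundles equipped with a section (\cite[Ch.~II, Sect.~6]{Hartshorne}). Write $\mathcal{L}=(G\times_B(H/H'))|_{\Hess{x_J}{H}}$, so that $\sectseq$ is a section of $\mathcal{L}$ whose zero scheme $\mathcal{Z}$ is (as a reduced scheme, by Proposition~\ref{prop: zero scheme is reduced}) the subvariety $\Hess{x_J}{H'}$, and $\mathcal{I}$ is its ideal sheaf in $\Hess{x_J}{H}$.

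First I would verify that $\mathcal{Z}$ is an effective Cartier divisor, so that $\mathcal{I}$ is invertible. By the local description in Section~\ref{sec: defining ideals}, on each chart $\Hess{x_J}{H}\cap wU^-$ the subscheme $\mathcal{Z}$ is cut out by the single function $\poly{\alpha}{\tilde{w}}$ inside the coordinate ring $\ring{H}{w}$. For those $w$ with $\Hess{x_J}{H}\cap wU^-\neq\emptyset$, Corollary~\ref{prop: A is the coordinate ring} guarantees that $\ring{H}{w}$ is an integral domain, and since $\Hess{x_J}{H'}$ is a \emph{proper} closed subscheme (it has codimension $1$ by Theorem~\ref{thm:precup}) the function $\poly{\alpha}{\tilde{w}}$ is a nonzero element of this domain, hence a non-zero-divisor. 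Thus $\mathcal{I}$ is locally generated by a single non-zero-divisor and is therefore an invertible sheaf.

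Next I would apply the standard fact that for a section $s$ of a line bundle $\mathcal{L}$ cutting out an effective Cartier divisor, the induced map $\mathcal{L}^{\vee}\to\mathcal{O}$ is injective with image the ideal sheaf of the zero scheme, yielding $\mathcal{I}\cong\mathcal{L}^{\vee}$. Finally, the representation-theoretic identification $(H/H')^{*}\cong\C_{\alpha}$, read off from the root-space decompositions of $H$ and $H'$, gives $\mathcal{L}^{\vee}=G\times_B(H/H')^{*}=G\times_B\C_{\alpha}=\LB_{-\alpha}$ after restriction to $\Hess{x_J}{H}$; composing the two isomorphisms proves $\mathcal{I}\cong\LB_{-\alpha}$.

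The main obstacle is the first step: ensuring that $\mathcal{I}$ is genuinely invertible rather than merely the ideal sheaf of a codimension-$1$ subscheme. This is exactly where the integrality of $\Hess{x_J}{H}$—equivalently, the reducedness established in Proposition~\ref{prop: zero scheme is reduced} together with its irreducibility—is indispensable, since over a non-integral ambient scheme a single local equation need not be a non-zero-divisor and the ideal sheaf could fail to be locally principal. Once invertibility is in hand, the remaining identifications are formal, so the genuine content of the proposition is imported from the reducedness result proved earlier.
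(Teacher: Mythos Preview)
Your proposal is correct and follows essentially the same route as the paper: both arguments identify the ideal sheaf of the zero scheme $\mathcal{Z}$ of $\sectseq$ with the dual of $(G\times_B(H/H'))|_{\Hess{x_J}{H}}\cong\LB_{\alpha}$, and then use the reducedness established in Proposition~\ref{prop: zero scheme is reduced} to equate $\mathcal{Z}$ with the variety $\Hess{x_J}{H'}$. Your write-up is slightly more explicit about why $\mathcal{I}$ is invertible (the non-zero-divisor check via integrality of $\ring{H}{w}$), whereas the paper is slightly more explicit about why $\mathcal{Z}\cong\mathcal{Z}(x_J,H')$ as subschemes of $G/B$; these are complementary emphases on the same argument.
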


\vspace{10pt}
In the next section, we will use Proposition \ref{prop: ideal sheaf} to study the cohomology groups of the structure sheaves of regular Hessenberg varieties. Here, we give a simple application of Proposition \ref{prop: ideal sheaf} to the classes of regular Hessenberg varieties in the $K$-theory of coherent sheaves on $G/B$. 

Taking $H'\subset H$ as above, we have an exact sequence of $\mathcal{O}_{X}$-modules: 
\begin{align*}
 0 \rightarrow \mathcal{I} \rightarrow \mathcal{O}_{X} \rightarrow \mathcal{O}_{X'} \rightarrow 0,
\end{align*}
where for simplicity we denote by $\mathcal{O}_{X'}$ the direct image of the structure sheaf of $X'$ under the inclusion map $X'\hookrightarrow X$. We have $\mathcal{I}\cong\IS_{-\alpha}$ from Proposition \ref{prop: ideal sheaf}. So, by taking direct images, we obtain an exact sequence of $\mathcal{O}_{G/B}$-modules: 
\begin{align*}
 0 \rightarrow \mathcal{L}_{-\alpha}\otimes_{\mathcal{O}_{G/B}} \mathcal{O}_X \rightarrow \mathcal{O}_{X} \rightarrow \mathcal{O}_{X'} \rightarrow 0.
\end{align*}
Denoting by $K(G/B)$ the $K$-theory of coherent sheaves on $G/B$, this implies an inductive formula
\begin{align*}
 [\mathcal{O}_{X'}] = [\mathcal{O}_{X}] (1-[\mathcal{L}_{-\alpha}]) \quad \text{in $K(G/B)$}.
\end{align*}
Here, the brackets denote equivalence classes in $K(G/B)$.
Now, it is straightforward to prove the following by induction (cf.\ \cite[Theorem 5]{InskoYong}).
\begin{corollary}
Let $X=\Hess{x_J}{H}$ be a regular Hessenberg variety. In $K(G/B)$, we have
\begin{align*}
[\mathcal{O}_{X}]=\prod_{\alpha\in \Phi^+ \setminus \HR{H}} (1-[\mathcal{L}_{-\alpha}]).
\end{align*}
In particular, the class of $\Hess{x_J}{H}$ in $K(G/B)$ does not depend on $J$. 
\end{corollary}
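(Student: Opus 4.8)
The plan is to argue by induction on the codimension $d \coloneqq |\Phi^+ \setminus \HR{H}|$ of $X=\Hess{x_J}{H}$ in $G/B$, which is exactly the number of factors in the claimed product. For the base case $d=0$ we have $\HR{H}=\Phi^+$, so that $H=\mathfrak{g}$ and $X=\Hess{x_J}{\mathfrak{g}}=G/B$; the right-hand product is empty and hence equal to $1=[\mathcal{O}_{G/B}]$, the multiplicative identity of $K(G/B)$, so the formula holds trivially.

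For the inductive step, suppose the formula is established for all Hessenberg spaces satisfying \eqref{eq:containing simple roots} whose associated subset of $\Phi^+$ has complement of size $d$, and let $H'$ be a Hessenberg space satisfying \eqref{eq:containing simple roots} with $|\Phi^+\setminus\HR{H'}|=d+1$. I would choose a minimal element $\alpha$ of the non-empty set $\Phi^+\setminus\HR{H'}$ with respect to $\preceq$, and set $\HR{H}\coloneqq\HR{H'}\cup\{\alpha\}$. The first thing to check is the combinatorial point that $H$ is again a Hessenberg space satisfying \eqref{eq:containing simple roots} and that $\alpha$ is a \emph{maximal} element of $\HR{H}$, so that the situation $H'\subset H$ is precisely the codimension-$1$ configuration to which Proposition \ref{prop: ideal sheaf} and the inductive formula derived just above apply. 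Minimality of $\alpha$ in the complement guarantees that every positive root $\gamma\prec\alpha$ lies in $\HR{H'}$, whence $\HR{H}$ is downward closed under $\preceq$ and is a genuine Hessenberg subset; condition \eqref{eq:containing simple roots} is inherited since $\HR{H}\supseteq\HR{H'}\supseteq\Delta$; and if some $\beta\in\HR{H'}$ satisfied $\beta\succ\alpha$, then downward-closedness of $\HR{H'}$ would force $\alpha\in\HR{H'}$, contradicting $\alpha\in\Phi^+\setminus\HR{H'}$, so $\alpha$ is maximal in $\HR{H}$.

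With this setup the inductive formula yields $[\mathcal{O}_{X'}]=[\mathcal{O}_X]\,(1-[\mathcal{L}_{-\alpha}])$, where $X'=\Hess{x_J}{H'}$. Since $|\Phi^+\setminus\HR{H}|=d$, the inductive hypothesis gives $[\mathcal{O}_X]=\prod_{\beta\in\Phi^+\setminus\HR{H}}(1-[\mathcal{L}_{-\beta}])$, and because $\Phi^+\setminus\HR{H'}=(\Phi^+\setminus\HR{H})\sqcup\{\alpha\}$ these two identities combine to the desired product expression for $[\mathcal{O}_{X'}]$, completing the induction. For the final assertion, the line bundles $\mathcal{L}_{-\beta}$ are defined over $G/B$ purely in terms of the weights $-\beta$ and involve no choice of $x_J$, so the resulting class $[\mathcal{O}_X]$ is manifestly independent of $J$.

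The only non-formal ingredient is the combinatorial verification in the second paragraph, namely that at each stage one can peel off a single maximal root while remaining within the class of Hessenberg spaces satisfying \eqref{eq:containing simple roots}; everything else is a direct unwinding of the inductive formula together with the empty-product base case, so I would not expect any serious obstacle here.
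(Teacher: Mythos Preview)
Your argument is correct and follows essentially the same approach as the paper, which merely states that the corollary is ``straightforward to prove \ldots\ by induction'' from the inductive formula $[\mathcal{O}_{X'}] = [\mathcal{O}_{X}](1-[\mathcal{L}_{-\alpha}])$ derived just before the statement. Your combinatorial verification that one can always enlarge $\HR{H'}$ by a minimal complementary root to obtain another Hessenberg space in which that root is maximal is exactly the missing detail the paper suppresses.
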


\bigskip
\section{Cohomology groups of structure sheaves}\label{sec:sheaf cohomology}
Let $\Hess{x_J}{H}$ be a regular Hessenberg variety. In this section, we will prove that the higher cohomology groups of the structure sheaf of $\Hess{x_J}{H}$ vanish. As a corollary, we will show that the Picard group $\Pic(\Hess{x_J}{H})$ is isomorphic to the second singular cohomology group $H^2(\Hess{x_J}{H};\Z)$, where we regard $\Hess{x_J}{H}$ as a complex analytic space when we apply the singular cohomology.

\subsection{Borel-Weil-Bott theorem}\label{sec:BWB}
For a weight $\lambda \colon T\rightarrow \C^{\times}$ of the maximal torus $T$ of $G$, we think of $\lambda$ as an element of $\mathfrak{t}^*$ via its differential. 
Let $\rho\coloneqq\frac{1}{2}\sum_{\alpha\in\Phi^+}\alpha=\sum_{i=1}^n\varpi_i$, where $\varpi_1,\ldots,\varpi_n$ are the fundamental weights of $T$, and we write $w\bullet\lambda\coloneqq w(\lambda+\rho)-\rho$ for $w\in W$.
Then, exactly one of the following two holds:
\begin{itemize}
\item[(1)] There is no $w\in W$ such that $w\bullet\lambda$ is dominant, i.e.\ $\lambda+\rho$ lies on the boundary of a Weyl chamber.
\item[(2)] There is a unique $w\in W$ such that $w\bullet\lambda$ is dominant, i.e.\ $\lambda+\rho$ lies in the interior of a Weyl chamber.
\end{itemize}
Note that, in the case (2), we have $\ell(w)=|\{\beta\in\Phi^+ \mid (\beta,\lambda+\rho)<0\}|$. For a dominant weight $\lambda$, let $V_{\lambda}$ be the irreducible representation of $G$ with highest weight $\lambda$. Recall that $\LB_{\lambda}=G\times_B \C_{\lambda}^*$ is the line bundle over $G/B$ associated to $\lambda$ defined in \eqref{eq:def of L_lambda}. Let us denote by $\IS_{\lambda}$ the sheaf of sections of $\LB_{\lambda}$.

The Borel-Weil-Bott theorem (\cite{Jantzen}) now tells us that in the case (1) above, we have
\begin{align*}
 H^i(G/B,\IS_{\lambda}) = 0 \quad (i\in\Z_{\geq0}),
\end{align*}
and in the case (2), we have 
\begin{align*}
 H^i(G/B,\IS_{\lambda}) \cong
 \begin{cases}
  0 \quad &(i\neq \ell(w)), \\
  V_{w\bullet\lambda}^* &(i=\ell(w)).
 \end{cases}
\end{align*}
When $\lambda$ is dominant, the above $w$ is the identity element in $W$, and hence this recovers the Borel-Weil theorem $H^0(G/B,\IS_{\lambda})\cong V_{\lambda}^*$.

\subsection{Higher cohomology groups}
Let $(\ , \ )$ be a $W$-invariant inner product on $\mathfrak{t}^*_{\R}\coloneqq\text{span}_{\mathbb{R}}\Phi\subseteq \mathfrak{t}^*$. 
The following lemma is a corollary of \cite[Lemma 2.13]{Macdonald 3}. 
\begin{lemma}\label{lem: Macdonald}
For any distinct positive roots $\gamma_1,\dots,\gamma_p \ (p\geq1)$ which are not simple roots, there exists a root $\alpha$ such that $(\alpha, \rho-\sum_{i=1}^p\gamma_i)=0$.
\end{lemma}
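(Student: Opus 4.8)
The plan is to argue by contradiction. Suppose the vector $v \coloneqq \rho - \sum_{i=1}^p \gamma_i$ is \emph{regular}, meaning that $(\alpha, v)\neq 0$ for every root $\alpha$, and I will derive a contradiction from the hypothesis that none of the $\gamma_i$ is simple. The bridge to the cited result is that regular vectors of the shape ``$\rho$ minus a sum of distinct positive roots'' are extremely rigid: the only subsums of positive roots that move $\rho$ to another regular vector are the inversion sets. Concretely, \cite[Lemma 2.13]{Macdonald 3} yields that if $v$ is regular then the set $\{\gamma_1,\dots,\gamma_p\}$ must coincide with the inversion set $N(w)\coloneqq\{\gamma\in\Phi^+ \mid w\gamma\in\Phi^-\}$ of some $w\in W$, in which case $v = \rho - \sum_{\gamma\in N(w)}\gamma = w\rho$, a Weyl conjugate of the regular vector $\rho$.

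Granting this, the argument is short. First, since $p\geq 1$ the inversion set $N(w)$ is nonempty, so $w\neq e$. The only genuinely combinatorial input is then the classical fact that every nonempty inversion set contains a simple root: as $w\neq e$, the element $w$ has a descent, i.e.\ $w\alpha_i\in\Phi^-$ for some simple root $\alpha_i\in\Delta$ (equivalently, $s_i$ occurs as the last letter of a reduced word for $w$), and hence $\alpha_i\in N(w)$. Thus the equality $\{\gamma_1,\dots,\gamma_p\}=N(w)$ would force one of the $\gamma_i$ to be the simple root $\alpha_i$, contradicting the assumption that each $\gamma_i$ is non-simple.

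Therefore the assumption that $v$ is regular is untenable, so there exists a root $\alpha$ with $(\alpha,\rho-\sum_{i=1}^p\gamma_i)=0$, which is exactly the assertion of the lemma (note that orthogonality is insensitive to the sign of $\alpha$, so it is immaterial whether $\alpha$ is taken positive or negative).

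I expect the only real obstacle to be the bookkeeping needed to match the precise formulation of \cite[Lemma 2.13]{Macdonald 3} with the ``regular $\Rightarrow$ inversion set'' statement used above, and in particular fixing the sign convention for $N(w)$ so that $\rho-\sum_{\gamma\in N(w)}\gamma$ is genuinely the Weyl translate $w\rho$ of $\rho$. Once that citation is aligned, the remaining step—that a nonempty inversion set always contains a simple root—is entirely routine, so no hard estimates or case analysis are anticipated.
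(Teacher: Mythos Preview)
Your argument is correct and essentially identical to the paper's own proof: both assume for contradiction that $\rho-\sum_i\gamma_i$ is regular, invoke \cite[Lemma~2.13]{Macdonald 3} to identify $\{\gamma_1,\dots,\gamma_p\}$ with an inversion set of some $w\in W$, and then use the elementary fact that a nonempty inversion set always contains a simple root to reach a contradiction. The only cosmetic difference is that the paper writes the inversion set as $R(w)=(w\Phi^-)\cap\Phi^+$ (your $N(w^{-1})$) and phrases the last step in contrapositive form, deducing $w=e$ from the absence of simple roots in $R(w)$.
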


\begin{proof}
Suppose that there does not exist a root $\alpha$ satisfying $(\alpha, \rho-\sum_{i=1}^p\gamma_i)=0$. Then, by \cite[Lemma 2.13]{Macdonald 3}, we have $\{\gamma_1,\dots,\gamma_p\}=R(w)$ for some $w\in W$, where $R(w)\coloneqq(w\Phi^-)\cap\Phi^+$. Since $\gamma_1,\dots,\gamma_p$ are not simple roots by our assumption, $R(w)$ does not contain any simple roots. Noticing that we can write $R(w) =\{ \alpha\in\Phi^+ \mid w^{-1}\alpha\in\Phi^-\}$, the fact that $R(w)$ does not contain any simple roots implies that $w^{-1}$ sends all simple roots to positive roots. Thus, it follows that $w^{-1}$ is the identity element. But this means that $R(w)=\emptyset$ which contradicts to $\{\gamma_1,\dots,\gamma_p\}\neq\emptyset$.
\end{proof}

\vspace{10pt}
This lemma shows that the weight $-\sum_{i=1}^p\gamma_i$ satisfies the case (1) in the Borel-Weil-Bott theorem. Thus,
for any distinct positive roots $\gamma_1,\dots,\gamma_p\ (p\geq1)$ which are not simple roots, we obtain $H^i(G/B,\IS_{-\gamma})=0$ for all $i$, where $\gamma\coloneqq\sum_{i=1}^p\gamma_i$. In particular, we have
\begin{align}\label{eq:higher vanishing for G/B}
H^i(G/B,\IS_{-\gamma})=0 \quad (i>p).
\end{align}
Note that we also have $H^i(G/B,\mathcal{O}_{G/B})=0$ for $i>0$ by the case (2) of the Borel-Weil-Bott theorem, where $\mathcal{O}_{G/B}(=\IS_0)$ is the structure sheaf of $G/B$. So we see that \eqref{eq:higher vanishing for G/B} holds when $p=0$ as well by regarding $\gamma=0$ in that case.

\begin{theorem}\label{thm: vanishing of higher cohomology}
Let $X=\Hess{x_J}{H}$ be a regular Hessenberg variety. Then
\begin{align*}
H^i(X,\mathcal{O}_X)=0 \quad (i>0),
\end{align*}
where $\mathcal{O}_X$ is the structure sheaf of $X$.
\end{theorem}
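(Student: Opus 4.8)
The plan is to prove the vanishing $H^i(X,\mathcal{O}_X)=0$ for $i>0$ by induction on the codimension $c=|\Phi^+\setminus\HR{H}|$ of $X=\Hess{x_J}{H}$ in $G/B$, using the codimension-1 filtration machinery from Proposition \ref{prop: ideal sheaf} to peel off one root at a time and pulling the cohomology back to $G/B$, where the Borel-Weil-Bott computation \eqref{eq:higher vanishing for G/B} does all the work. The base case $c=0$ is $X=G/B$, where $H^i(G/B,\mathcal{O}_{G/B})=0$ for $i>0$ holds by Borel-Weil-Bott. For the inductive step, I would choose a maximal element $\alpha$ of $\HR{H}$ and set $\HR{H'}=\HR{H}\setminus\{\alpha\}$, so that $X'=\Hess{x_J}{H'}\subset X$ has codimension $1$; Proposition \ref{prop: ideal sheaf} identifies the ideal sheaf $\mathcal{I}$ of $X'$ in $X$ with $\IS_{-\alpha}|_X$. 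Twisting the short exact sequence $0\to\mathcal{I}\to\mathcal{O}_X\to\mathcal{O}_{X'}\to 0$ and taking direct images along $X\hookrightarrow G/B$ then yields the long exact cohomology sequence relating $H^\bullet(X,\mathcal{O}_X)$, $H^\bullet(X',\mathcal{O}_{X'})$, and $H^\bullet(X,\IS_{-\alpha}|_X)$.

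\medskip
\noindent\textbf{Reducing to line bundles on $G/B$.}
The key observation that makes the induction run is that the higher cohomology of $X$ with coefficients in a twist $\IS_{-\gamma}|_X$, where $\gamma$ is a sum of positive non-simple roots, can itself be computed on $G/B$. More precisely, iterating Proposition \ref{prop: ideal sheaf} across the full chain
\begin{align*}
\Hess{x_J}{H_0}\subset\Hess{x_J}{H_1}\subset\cdots\subset\Hess{x_J}{H_c}=G/B,
\end{align*}
obtained by adding back the roots of $\Phi^+\setminus\HR{H}$ one at a time in an order refining $\preceq$, resolves $\mathcal{O}_X$ (pushed forward to $G/B$) by a Koszul-type complex whose terms are the line bundles $\IS_{-\gamma}$ for $\gamma$ ranging over sums of distinct subsets of $\Phi^+\setminus\HR{H}$. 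Since $\HR{H}\supseteq\Delta$ by \eqref{eq:containing simple roots}, every root in $\Phi^+\setminus\HR{H}$ is non-simple, so each such $\gamma$ is a sum of $p$ distinct non-simple positive roots and Lemma \ref{lem: Macdonald} applies: the weight $-\gamma$ falls in case (1) of Borel-Weil-Bott, giving $H^i(G/B,\IS_{-\gamma})=0$ for all $i$, and in particular the sharpened bound \eqref{eq:higher vanishing for G/B}, namely $H^i(G/B,\IS_{-\gamma})=0$ for $i>p$. Feeding these vanishing bounds into the spectral sequence (or the successive long exact sequences) of the resolution forces $H^i(X,\mathcal{O}_X)=0$ for $i>0$, because the only term that can contribute in positive degree is controlled by the degree shift matching the number of roots removed.

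\medskip
\noindent\textbf{The main obstacle.}
The delicate point is bookkeeping the degree shifts: a single long exact sequence argument by induction on codimension does not immediately close, because $H^i(X,\IS_{-\alpha}|_X)$ is not obviously zero for the relevant $i$ — one needs the refined count $p$ in \eqref{eq:higher vanishing for G/B} rather than merely total vanishing. I expect the cleanest route is to prove a stronger statement by induction, namely that $H^i(X,\IS_{-\gamma}|_X)=0$ whenever $i>(\text{number of summands of }\gamma)$, for $\gamma$ a sum of distinct roots in $\Phi^+\setminus\HR{H}$; the hypothesis $H^i(X,\mathcal{O}_X)=0$ for $i>0$ is then the case $\gamma=0$. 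Each inductive step uses the sequence $0\to\IS_{-\gamma-\alpha}|_X\to\IS_{-\gamma}|_X\to\IS_{-\gamma}|_{X'}\to 0$ and the fact that adjoining $\alpha$ raises the allowed degree by exactly one, so the two flanking vanishing ranges overlap and pin down the middle. Making precise that the pushforwards and twists commute correctly with restriction to the subvarieties $X'$ — and that Proposition \ref{prop: ideal sheaf} indeed gives $\IS_{-\alpha}$ and not a different twist at each stage — is the part that requires care, but it follows formally from the results already established in Sections \ref{sec: defining ideals} and 4.
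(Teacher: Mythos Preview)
Your overall strategy --- strengthen the inductive hypothesis to a vanishing statement for twists $\IS_{-\gamma}|_X$ and induct on codimension via the short exact sequence coming from Proposition~\ref{prop: ideal sheaf} --- is exactly what the paper does. But the stronger statement you write down has the wrong set of roots, and with that choice the induction does not close.

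You ask that $\gamma$ be a sum of $p$ distinct roots in $\Phi^+\setminus\HR{H}$. In the step from $X=\Hess{x_J}{H}$ down to $X'=\Hess{x_J}{H'}$ with $\HR{H'}=\HR{H}\setminus\{\alpha\}$, the flanking term you must control is $H^{i+1}(X,\IS_{-\gamma-\alpha})$, and for the inductive hypothesis on $X$ to apply you need $\gamma+\alpha$ to be a sum of $p+1$ distinct roots in $\Phi^+\setminus\HR{H}$. But $\alpha\in\HR{H}$, so $\alpha\notin\Phi^+\setminus\HR{H}$: the new summand never lies in your allowed set, and the induction stalls at the first step. The paper takes the \emph{complementary} set: the $\gamma_j$ are required to be distinct non-simple roots in $\HR{H}$ (respectively $\HR{H'}$ for the statement on $X'$). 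Then $\gamma_j\in\HR{H'}\subset\HR{H}$ while $\alpha\in\HR{H}\setminus\HR{H'}$, so $\alpha,\gamma_1,\ldots,\gamma_p$ are automatically distinct non-simple roots in $\HR{H}$, and the hypothesis on $X$ covers both flanking terms. The base case $H=\mathfrak{g}$ is then precisely \eqref{eq:higher vanishing for G/B}, whose input Lemma~\ref{lem: Macdonald} needs only that the $\gamma_j$ are non-simple.

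Your Koszul remark in the middle paragraph would in fact give a correct alternative: since $X$ is the reduced zero locus of a regular section of $E=G\times_B(\mathfrak{g}/H)$ (Propositions~\ref{prop: LCI and CM} and~\ref{prop: zero scheme is reduced}), the Koszul complex of $\sect{x_J}$ resolves $i_*\mathcal{O}_X$ by the bundles $\wedge^p E^\vee$ on $G/B$, each of which filters by line bundles $\IS_{-\gamma}$ with $\gamma$ a sum of $p$ roots in $\Phi^+\setminus\HR{H}$; for $p\geq1$ Lemma~\ref{lem: Macdonald} kills all cohomology of these, and the hypercohomology spectral sequence collapses to $H^i(X,\mathcal{O}_X)\cong H^i(G/B,\mathcal{O}_{G/B})$. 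Note, though, that this resolution comes from the section $\sect{x_J}$ directly, not from ``iterating Proposition~\ref{prop: ideal sheaf}'' as you describe --- and you set it aside in favor of the induction whose hypothesis, as written, does not propagate.
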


\begin{proof}
We will rather prove the following: for any distinct positive roots $\gamma_1,\dots,\gamma_p\in\HR{H} \ (p\geq0)$ which are not simple roots, we have
\begin{align}\label{eq: stronger vanishing}
H^i(X,\IS_{-\gamma})=0 \quad (i>p),
\end{align}
where $\gamma=\sum_{i=1}^p\gamma_i$. Then the desired claim follows from the case $p=0$. We prove this by induction on the dimension of $\mathfrak{g}/H$. We know that \eqref{eq: stronger vanishing} holds when $X=G/B$ (i.e.\ when $\dim\mathfrak{g}/H=0$) as we explained above.

We assume by induction that the claim holds for $X=\Hess{x_J}{H}$, and we show that it holds for $X'=\Hess{x_J}{H'}$ as well, where $H'\subset H$ is a Hessenberg space (satisfying condition \eqref{eq:containing simple roots} in Section \ref{sec: regular Hessenberg varieties}) such that $\HR{H'}=\HR{H}\setminus\{\alpha\}$ for some maximal element $\alpha$ of $\HR{H}$. That is, we assume that $\gamma_1,\ldots,\gamma_p$ $(p\geq 0)$ are distinct positive roots in $\HR{H'}$ which are not simple roots, and we prove $H^i(X',\IS_{-\gamma})=0$ for $i>p$. Let $\mathcal{I}$ be the ideal sheaf of $\Hess{x_J}{H'}$ on $\Hess{x_J}{H}$. Then we have $\mathcal{I}\cong\IS_{-\alpha}$ by Proposition \ref{prop: ideal sheaf}, and thus we have an exact sequence of $\mathcal{O}_{X}$-modules: 
\begin{align*}
 0 \rightarrow \IS_{-\alpha} \rightarrow \mathcal{O}_{X} \rightarrow \mathcal{O}_{X'} \rightarrow 0,
\end{align*}
where for simplicity we denote by $\mathcal{O}_{X'}$ the direct image of the structure sheaf of $X'$ under the inclusion map $X'\hookrightarrow X$.
So, by tensoring $\IS_{-\gamma}$, we obtain an exact sequence
\begin{align*}
 0 \rightarrow \IS_{-\alpha-\gamma} \rightarrow \IS_{-\gamma} \rightarrow \IS_{-\gamma}\otimes_{\mathcal{O}_X}\mathcal{O}_{X'} \rightarrow 0.
\end{align*}
Now we have a long exact sequence of cohomology groups: 
\begin{align*}
 \cdots \rightarrow
 H^i(X,\IS_{-\gamma}) 
 \rightarrow 
 H^i(X',\IS_{-\gamma})
 \rightarrow 
 H^{i+1}(X,\IS_{-\alpha-\gamma}) 
 \rightarrow \cdots.
\end{align*}
Since we have $\alpha\in\HR{H}\setminus\HR{H'}$ and $\gamma_1,\ldots,\gamma_p\in\HR{H'}$, it follows that the elements $\alpha, \gamma_1,\ldots,\gamma_p$ are distinct positive roots in $\HR{H}$ which are not simple roots, so that the inductive hypothesis applies to $H^i(X,\IS_{-\gamma})$ and $H^{i+1}(X,\IS_{-\alpha-\gamma})$ in this sequence. That is, for $i>p$, we have $H^i(X,\IS_{-\gamma})=0$ and $H^{i+1}(X,\IS_{-\alpha-\gamma})=0$, and hence we obtain $H^i(X',\IS_{-\gamma})=0$, as desired. Thus, by induction, we obtain \eqref{eq: stronger vanishing}.
\end{proof}

\vspace{10pt}
Combining this with the GAGA principle \cite{Serre}, we obtain the following corollary. Let $\Pic(\Hess{x_J}{H})$ be the Picard group of $\Hess{x_J}{H}$ and $ H^2(\Hess{x_J}{H};\Z)$ the second singular cohomology group with $\Z$-coefficients, where we regard $\Hess{x_J}{H}$ as a complex analytic space when we apply the singular cohomology.
\begin{corollary}\label{cor: Pic and H^2}
Let $\Hess{x_J}{H}$ be a regular Hessenberg variety. Then we have 
\begin{align*}
\Pic(\Hess{x_J}{H}) \cong H^2(\Hess{x_J}{H};\Z).
\end{align*}
\end{corollary}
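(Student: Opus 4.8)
The plan is to deduce the isomorphism $\Pic(X) \cong H^2(X;\Z)$ from the vanishing theorem just proved (Theorem \ref{thm: vanishing of higher cohomology}) by combining the exponential sheaf sequence in the analytic category with the GAGA principle. First I would pass to the analytification $X^{\mathrm{an}}$, which is legitimate since $X=\Hess{x_J}{H}$ is a projective variety over $\C$; by GAGA (\cite{Serre}) the natural map $\Pic(X)\to\Pic(X^{\mathrm{an}})$ is an isomorphism, and moreover $H^i(X,\mathcal{O}_X)\cong H^i(X^{\mathrm{an}},\mathcal{O}_{X^{\mathrm{an}}})$ for all $i$. In particular Theorem \ref{thm: vanishing of higher cohomology} gives $H^1(X^{\mathrm{an}},\mathcal{O}_{X^{\mathrm{an}}})=H^2(X^{\mathrm{an}},\mathcal{O}_{X^{\mathrm{an}}})=0$.

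The main step is then the exponential exact sequence of sheaves on the complex analytic space $X^{\mathrm{an}}$,
\begin{align*}
 0 \rightarrow \Z \rightarrow \mathcal{O}_{X^{\mathrm{an}}} \xrightarrow{\ \exp(2\pi i\,\cdot)\ } \mathcal{O}_{X^{\mathrm{an}}}^{\times} \rightarrow 0,
\end{align*}
whose associated long exact sequence in cohomology reads
\begin{align*}
 H^1(X^{\mathrm{an}},\mathcal{O}_{X^{\mathrm{an}}}) \rightarrow H^1(X^{\mathrm{an}},\mathcal{O}_{X^{\mathrm{an}}}^{\times}) \rightarrow H^2(X^{\mathrm{an}};\Z) \rightarrow H^2(X^{\mathrm{an}},\mathcal{O}_{X^{\mathrm{an}}}).
\end{align*}
Since the two outer terms vanish by the previous paragraph, the connecting map $H^1(X^{\mathrm{an}},\mathcal{O}_{X^{\mathrm{an}}}^{\times})\to H^2(X^{\mathrm{an}};\Z)$ is an isomorphism. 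Identifying $H^1(X^{\mathrm{an}},\mathcal{O}_{X^{\mathrm{an}}}^{\times})$ with the analytic Picard group $\Pic(X^{\mathrm{an}})$, and $H^2(X^{\mathrm{an}};\Z)$ with the singular cohomology $H^2(X;\Z)$ used in the statement, we obtain $\Pic(X^{\mathrm{an}})\cong H^2(X;\Z)$.

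Finally I would splice the two isomorphisms together: the GAGA identification $\Pic(X)\cong\Pic(X^{\mathrm{an}})$ composed with the connecting isomorphism $\Pic(X^{\mathrm{an}})\cong H^2(X;\Z)$ yields $\Pic(\Hess{x_J}{H})\cong H^2(\Hess{x_J}{H};\Z)$, which is exactly the claim. I do not expect a genuine obstacle here, since the heavy lifting (the higher vanishing) is already done; the only points needing care are that $X$ is projective and that one may apply GAGA on a possibly singular space — but $X$ being a closed subscheme of $G/B$ is projective, and GAGA holds for arbitrary projective $\C$-schemes, so the argument goes through. The one bookkeeping subtlety worth flagging is the identification of sheaf-theoretic $H^1(X^{\mathrm{an}},\mathcal{O}^{\times}_{X^{\mathrm{an}}})$ with $\Pic(X^{\mathrm{an}})$, which is standard for paracompact spaces and applies to the compact analytic space $X^{\mathrm{an}}$.
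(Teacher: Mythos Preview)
Your proposal is correct and follows essentially the same argument as the paper: pass to the analytification via GAGA, use Theorem~\ref{thm: vanishing of higher cohomology} to kill $H^1$ and $H^2$ of $\mathcal{O}_{X^{\mathrm{an}}}$, and then read off the isomorphism from the long exact sequence of the exponential sheaf sequence. The paper's proof is identical in structure, with only cosmetic differences in presentation.
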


\begin{proof}
Let us denote $X=\Hess{x_J}{H}$ for simplicity, and we write $X^{an}$ when we regard $X$ as a complex analytic space\footnote{This is called the analytification of $X$.}. Then \cite{Serre} tells us that the group of isomorphism classes of algebraic line bundles on $X$ and the group of isomorphism classes of analytic line bundles on $X^{an}$ are isomorphic. The former group is exactly the Picard group $\Pic(X)$ which is the same as $H^1(X,\mathcal{O}_{X}^*)$, where $\mathcal{O}_{X}^*$ is the sheaf of invertible regular functions on $X$ \cite[Ch.\ III, Exercise 4.5]{Hartshorne}. Similarly, the latter group is given by $H^1(X^{an},\mathcal{O}_{X^{an}}^*)$, where $\mathcal{O}_{X^{an}}^*$ is the sheaf of invertible holomorphic functions on $X^{an}$.

On $X^{an}$, we have the exponential sequence
\begin{align}\label{eq: exp seq}
 0 \rightarrow \underline{\Z} \rightarrow \mathcal{O}_{X^{an}} \stackrel{\exp}{\rightarrow} \mathcal{O}_{X^{an}}^* \rightarrow 0,
\end{align}
where $\underline{\Z}$ is the constant sheaf on $X$ with values in $\Z$ and $\mathcal{O}_{X^{an}}$ is the sheaf of holomorphic functions on $X^{an}$. Since Theorem \ref{thm: vanishing of higher cohomology} together with \cite{Serre} shows that we have $H^i(X^{an},\mathcal{O}_{X^{an}})=0$ for $i>0$, we obtain an isomorphism $H^1(X^{an},\mathcal{O}_{X^{an}}^*)\cong H^2(X^{an},\underline{\Z})$ from the long exact sequence of cohomology groups arising from \eqref{eq: exp seq}. The sheaf cohomology group $H^2(X^{an},\underline{\Z})$ is isomorphic to the singular cohomology group $H^2(X^{an};\Z)$ with $\Z$-coefficients, and this is exactly the one we write as $H^2(\Hess{x_J}{H};\Z)$ in the claim.
\end{proof}

\vspace{10pt}
For example, when $J=I$, the element $N_0\coloneqq x_I=n_I$ is regular nilpotent, and $\Hess{N_0}{H}$ is a regular nilpotent Hessenberg variety. It is known from \cite{precup13a} and \cite{Insko-Tymoczko} that the restriction map gives us an isomorphism\footnote{In type $A$, this holds with $\Z$-coefficients by \cite{ty} and \cite{Insko}.}
\begin{align*}
H^2(\Hess{N_0}{H};\Q) \cong H^2(G/B;\Q)
\end{align*}
since we are assuming that all the simple roots are contained in $\HR{H}$ (i.e.\ the assumption \eqref{eq:containing simple roots}). Thus, for a regular nilpotent Hessenberg variety $\Hess{N_0}{H}$, we obtain 
\begin{align*}
\Pic(\Hess{N_0}{H})_{\Q} \cong \Pic(G/B)_{\Q},
\end{align*}
where $\Pic(X)_{\Q}\coloneqq\Pic(X)\otimes_{\Z}\Q$.

\bigskip
\section{Family of regular Hessenberg varieties}\label{sec: flat family}
In this section, we study the flat family of regular Hessenberg varieties over the open subvariety $\mathfrak{g}_{{\rm reg}}\subset\mathfrak{g}$ consisting of the regular elements. The goal of this section is to prove that the scheme-theoretic fibers of this family over the closed points are reduced (when we think of this family as the associated integral scheme). As a corollary, we will show that any regular Hessenberg variety admits a flat degeneration to a regular nilpotent Hessenberg variety. In this sense, the regular nilpotent Hessenberg varieties are the most degenerated ones among all regular Hessenberg varieties. We also show that regular Hessenberg varieties associated to a Hessenberg space determine the same cycle in the singular homology $H_*(G/B;\Z)$.

Let $H\subseteq\mathfrak{g}$ be a Hessenberg space satisfying condition \eqref{eq:containing simple roots} in Section \ref{sec: regular Hessenberg varieties}. We begin with the family of all Hessenberg varieties associated to $H$:
\begin{align*}
\AHess{H} \coloneqq \{(gB,x) \in G/B \times \mathfrak{g} \mid \text{Ad}(g^{-1})x \in H \}.
\end{align*}
Observe that we have an isomorphism $\AHess{H} \cong G\times_B H$ given by $(gB,x) \mapsto [g, \text{Ad}(g^{-1})x]$. So, the first projection $\AHess{H}\rightarrow G/B$ is a vector bundle (cf.\ \cite[Sect.\ 8.2]{Brosnan-Chow}). In particular, $\AHess{H}$ is a non-singular variety. Let us denote by $\pi \colon \AHess{H} \rightarrow \mathfrak{g}$ the second projection. 

Let $\mathfrak{g}_{{\rm reg}}$ be the set of regular elements of $\mathfrak{g}$. It is a Zariski open subset of $\mathfrak{g}$ (\cite[Ch.\ 1]{Humphreys95}). Now let $\FHess{H}$ be the family of regular Hessenberg varieties, i.e.\ 
\begin{align}\label{eq:def of family of reg Hess}
\FHess{H} \coloneqq \pi^{-1}(\mathfrak{g}_{{\rm reg}}) = \{(gB,x) \in G/B \times \mathfrak{g}_{{\rm reg}} \mid \text{Ad}(g^{-1})x \in H \}.
\end{align}
Namely, for each $x\in\mathfrak{g}_{{\rm reg}}$, its fiber $\pi^{-1}(x)=\Hess{x}{H}$ is a regular Hessenberg variety. By definition, $\FHess{H}$ has the natural projection $\pi \colon \FHess{H} \rightarrow \mathfrak{g}_{{\rm reg}}$ which by slight abuse of notation we also denote by $\pi$. Note that $\FHess{H}$ is irreducible and non-singular since $\FHess{H}$ is an open subvariety of $\AHess{H}$. In particular, it is Cohen-Macaulay. Also, Theorem \ref{thm:precup} tells us that the fibers of $\pi \colon \FHess{H}\rightarrow \mathfrak{g}_{{\rm reg}}$ have the same dimension. So, by \cite[Sect.\ 23]{Matsumura}, the following holds when we regard the varieties $\FHess{H}$ and $\mathfrak{g}_{{\rm reg}}$ as integral schemes. 

\begin{proposition}\label{prop: flatness}
The morphism \emph{$\pi \colon \FHess{H} \rightarrow \mathfrak{g}_{{\rm reg}}$} is flat.
\end{proposition}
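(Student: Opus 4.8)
The plan is to apply the local criterion for flatness (``miracle flatness,'' \cite[Sect.\ 23]{Matsumura}): a finite-type morphism $f\colon X\to Y$ of Noetherian schemes with $X$ Cohen--Macaulay and $Y$ regular is flat provided that, at every point $x\in X$, one has $\dim_x f^{-1}(f(x)) = \dim X - \dim Y$, i.e.\ the nonempty fibers are equidimensional of the expected dimension. The task thus reduces to checking these hypotheses for $\pi\colon\FHess{H}\to\mathfrak{g}_{{\rm reg}}$, all of which are essentially already in hand.

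First, the source is Cohen--Macaulay: as noted above, $\FHess{H}$ is an open subvariety of $\AHess{H}\cong G\times_B H$, the total space of a vector bundle over $G/B$, and is therefore non-singular, hence Cohen--Macaulay. Second, the base $\mathfrak{g}_{{\rm reg}}$ is regular, being a Zariski open subset of the affine space $\mathfrak{g}$. Third, every nonempty fiber is equidimensional of the correct dimension: by Theorem \ref{thm:precup}, the fiber $\pi^{-1}(x)=\Hess{x}{H}$ over any $x\in\mathfrak{g}_{{\rm reg}}$ is irreducible of dimension $|\HR{H}|$, and a direct count confirms the numerical match. Indeed, since $\FHess{H}$ is open in a rank-$(\dim H)$ vector bundle over $G/B$, we have $\dim\FHess{H}=\dim G/B+\dim H$, while $\dim\mathfrak{g}_{{\rm reg}}=\dim\mathfrak{g}$; using $\dim G/B=|\Phi^+|$, $\dim\mathfrak{g}=\dim\mathfrak{b}+|\Phi^+|$, and $\dim H=\dim\mathfrak{b}+|\HR{H}|$, one obtains $\dim\FHess{H}-\dim\mathfrak{g}_{{\rm reg}}=|\HR{H}|$, which equals the fiber dimension. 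Invoking the criterion then yields the flatness of $\pi$.

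There is no genuine obstacle here: the substance of the argument is the equidimensionality of all regular Hessenberg varieties attached to a fixed Hessenberg space $H$, which is precisely the content of Precup's Theorem \ref{thm:precup}, while the Cohen--Macaulayness of the smooth source and the regularity of the affine-open base are immediate. The only point worth recording is that it suffices to verify the fiber-dimension identity at closed points, since this comparison is constructible and flatness is a local (open) condition; this avoids having to examine non-closed points of $\FHess{H}$ directly.
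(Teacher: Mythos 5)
Your argument is exactly the paper's: both invoke the local (``miracle flatness'') criterion from \cite[Sect.\ 23]{Matsumura}, using that $\FHess{H}$ is smooth (hence Cohen--Macaulay) as an open subvariety of the vector bundle $\AHess{H}\cong G\times_B H$, that $\mathfrak{g}_{{\rm reg}}$ is regular, and that Theorem \ref{thm:precup} gives equidimensionality of the fibers. The explicit dimension count confirming $\dim\FHess{H}-\dim\mathfrak{g}_{{\rm reg}}=|\HR{H}|$ is a welcome detail that the paper leaves implicit, but the proof is the same.
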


The flatness of the family $\pi \colon \FHess{H} \rightarrow \mathfrak{g}_{{\rm reg}}$ ensures that certain algebro-geometric invariants of the \textit{scheme-theoretic fibers} are constant along the family. However, scheme-theoretic fibers of a family could be non-reduced schemes on some special points, especially on the subset for which the fibers degenerate. In that case, those invariants of a non-reduced fiber could be different from the ones computed with the reduced scheme structure. We will show in the next subsection that the scheme-theoretic fibers of $\pi \colon \FHess{H} \rightarrow \mathfrak{g}_{{\rm reg}}$ over the closed points are reduced.

\subsection{Reducedness of the fibers}\label{subsec: reducedness of fibers}
The goal of this section is to prove the following. Note again that we implicitly think of $\FHess{H}$ and $\mathfrak{g}_{{\rm reg}}$ as their associated integral schemes in the claim. 

\begin{theorem}\label{thm: fibers are reduced}
The scheme-theoretic fibers of \emph{$\pi \colon \FHess{H} \rightarrow \mathfrak{g}_{{\rm reg}}$} over the closed points are reduced.
\end{theorem}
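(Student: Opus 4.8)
The plan is to identify each scheme-theoretic fiber with the zero scheme $\mathcal{Z}(x,H)$ of Section~\ref{subsec: zero scheme} and then to invoke the reducedness already proved for those schemes. First I would globalize the section \eqref{eq: section of the Hessenberg vector bundle}: writing $\mathrm{pr}_1\colon G/B\times\mathfrak{g}_{\mathrm{reg}}\to G/B$ for the first projection and $\mathcal{E}\coloneqq\mathrm{pr}_1^*(G\times_B(\mathfrak{g}/H))$, the element $x$ of each fiber assembles into a single section $\tau$ of $\mathcal{E}$ sending $(gB,x)$ to $[g,\overline{\mathrm{Ad}(g^{-1})x}]$, and its restriction to $G/B\times\{x\}$ is exactly $\sect{x}$. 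The zero scheme $\mathbb{V}(\tau)$ has support $\FHess{H}$, and for each fixed $gB$ the linear map $x\mapsto\overline{\mathrm{Ad}(g^{-1})x}$ is the surjection $\mathfrak{g}\twoheadrightarrow\mathfrak{g}/H$; hence $\tau$ is transverse to the zero section and $\mathbb{V}(\tau)$ is smooth. As its support is the irreducible variety $\FHess{H}$, the zero-scheme structure agrees with the integral structure, so $\mathbb{V}(\tau)=\FHess{H}$.

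Next I would use that the formation of zero schemes commutes with base change. For a closed point $x\in\mathfrak{g}_{\mathrm{reg}}$ (with residue field $\C$),
\[
\pi^{-1}(x)=\mathbb{V}(\tau)\times_{\mathfrak{g}_{\mathrm{reg}}}\Spec\C=\mathbb{V}\bigl(\tau|_{G/B\times\{x\}}\bigr)=\mathbb{V}(\sect{x})=\mathcal{Z}(x,H).
\]
On each chart this reads $\pi^{-1}(x)\cap wU^-=\Spec\C[wU^-]/(\poly{\alpha}{\tilde{w}}\mid \alpha\in\Phi^+\setminus\HR{H})=\Spec\ring{H}{w}$, in agreement with Lemma~\ref{lem: affine cover for zero}. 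Thus the theorem is equivalent to the statement that $\mathcal{Z}(x,H)$ is reduced for every regular $x$.

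To finish I would reduce to the Jordan form $x_J$ by equivariance. The group $G$ acts on $G/B\times\mathfrak{g}_{\mathrm{reg}}$ by $g\cdot(g'B,x)=(gg'B,\mathrm{Ad}(g)x)$; this preserves $\FHess{H}$ and covers the adjoint action on $\mathfrak{g}_{\mathrm{reg}}$, so it carries the scheme $\pi^{-1}(x)$ isomorphically onto $\pi^{-1}(\mathrm{Ad}(g)x)$. By \eqref{eq:Jordan decomp} every regular element is conjugate to some $x_J$, whence $\pi^{-1}(x)\cong\mathcal{Z}(x_J,H)$, which is reduced by Proposition~\ref{prop: zero scheme is reduced}. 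This gives the claim.

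The substantive input, namely the reducedness of $\mathcal{Z}(x_J,H)$, is already contained in Proposition~\ref{prop: zero scheme is reduced}; the only real care needed here is the scheme-theoretic (rather than merely set-theoretic) identification $\pi^{-1}(x)=\mathcal{Z}(x,H)$. I expect the main obstacle to be verifying that $\tau$ cuts out $\FHess{H}$ with radical ideal --- equivalently that the integral structure on $\FHess{H}$ coincides with the zero-scheme structure of $\tau$ --- which is precisely what the transversality of $\tau$ to the zero section secures, after which compatibility of zero schemes with base change yields the rest.
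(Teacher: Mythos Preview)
Your proof is correct and follows the same overall architecture as the paper: identify the integral scheme $\FHess{H}$ with the zero scheme of the global section, use compatibility of zero schemes with base change to identify each scheme-theoretic fiber with $\mathcal{Z}(x,H)$, then conjugate $x$ to some $x_J$ and invoke Proposition~\ref{prop: zero scheme is reduced}. The one genuine difference lies in how you show that the zero-scheme structure on $\FHess{H}$ is reduced. The paper argues indirectly: $\ZFHess{H}$ has the expected codimension, hence is Cohen--Macaulay; the fiber over a regular semisimple point is smooth by \cite{DMPS}, so by flatness $\pi$ is smooth over an open set, giving generic reducedness; and Cohen--Macaulay plus generically reduced implies reduced. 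You instead observe that differentiating $\tau$ in the $\mathfrak{g}$-direction at any point already surjects onto $\mathfrak{g}/H$, so $\tau$ is transverse to the zero section and $\mathbb{V}(\tau)$ is smooth everywhere. Your route is shorter and self-contained, avoiding both the flatness/openness-of-smoothness machinery and the external input that regular semisimple Hessenberg varieties are smooth; the paper's route, on the other hand, is closer in spirit to the proof of Proposition~\ref{prop: zero scheme is reduced} and reuses the same Cohen--Macaulay argument.
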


To give a proof, we first consider a scheme structure on $\FHess{H}$ in the same manner as in Section \ref{sec: defining ideals}, and we prove that this scheme is reduced to see that it is exactly the integral scheme appearing in Theorem \ref{thm: fibers are reduced}. 

Recall that in Section \ref{subsec: zero scheme} we considered the vector bundle $G\times_B(\mathfrak{g}/H)$ over $G/B$. This induces a vector bundle 
\begin{align*}
(G\times_B(\mathfrak{g}/H))\times \mathfrak{g}_{{\rm reg}}
\end{align*}
over the product $G/B\times \mathfrak{g}_{{\rm reg}}$. For this vector bundle, we have a well-defined section
\begin{align}\label{eq:family section}
\Fsect \colon G/B\times \mathfrak{g}_{{\rm reg}} \rightarrow (G\times_B(\mathfrak{g}/H))\times \mathfrak{g}_{{\rm reg}}
\quad ; \quad 
(gB,x)\mapsto ([g,\overline{\text{Ad}(g^{-1})x}],x),
\end{align}
where $\overline{\text{Ad}(g^{-1})x}\in\mathfrak{g}/H$ denotes the image of $\text{Ad}(g^{-1})x\in\mathfrak{g}$. It is obvious that $\FHess{H}$ is the zero-set $\mathbb{V}(\Fsect)$.

\begin{definition}
Let $\ZFHess{H}$ be the zero scheme of the section $\Fsect$ given at \eqref{eq:family section}.
\end{definition}

Let $\mathcal{G}_{{\rm reg}}$ be the integral scheme associated to $\mathfrak{g}_{{\rm reg}}$. Then we have a natural morphism $\pi\colon\ZFHess{H}\rightarrow \mathcal{G}_{{\rm reg}}$ corresponding to $\pi \colon \FHess{H} \rightarrow \mathfrak{g}_{{\rm reg}}$ which by abusing notation we also denote by $\pi$. 
Now take a regular element $x\in\mathfrak{g}_{{\rm reg}}$. Recall from Section \ref{sec: background and notation} that we have the zero scheme $\ZHess{x}{H}$ defined by the section $\sect{x} \colon G/B \rightarrow G\times_B(\mathfrak{g}/H)$ given at \eqref{eq: section of the Hessenberg vector bundle}. Observe that this vector bundle $G\times_B(\mathfrak{g}/H)$ and the section $\sect{x}$ are obtained by pulling back the vector bundle and the section $\Fsect$ appearing in \eqref{eq:family section} under the closed embedding $G/B\times\{x\}\hookrightarrow G/B\times \mathfrak{g}_{{\rm reg}}$:
\begin{align*}
\xymatrix{
G\times_B(\mathfrak{g}/H) \ar[r] & (G\times_B(\mathfrak{g}/H))\times \mathfrak{g}_{{\rm reg}} \\
G/B\times\{x\} \ar[u]^{\sect{x}}\ar[r] & G/B\times\mathfrak{g}_{{\rm reg}}  \ar[u]_{\Fsect}.
}
\end{align*}
Thus, it follows that $\ZHess{x}{H}$ is a closed subscheme of $\ZFHess{H}$ given by the base change of $\ZFHess{H}$ under the closed embedding $\Spec k(x)\rightarrow \mathcal{G}_{{\rm reg}}$, where $k(x)$ is the residue field of the closed point $x\in\mathcal{G}_{{\rm reg}}$:
\begin{align}\label{eq: diagram of zero schemes}
\xymatrix{
\ZHess{x}{H} \ar[d]\ar[r] & \ZFHess{H} \ar[d] \\
\Spec k(x) \ar[r] & \mathcal{G}_{{\rm reg}}.
}
\end{align}
Namely, $\ZHess{x}{H}\cong \ZFHess{H}\times_{\mathcal{G}_{{\rm reg}}} \Spec k(x)$ is the scheme-theoretic fiber of $\ZFHess{H}\rightarrow \mathcal{G}_{{\rm reg}}$ at the closed point $x\in\mathcal{G}_{{\rm reg}}$.\\

Before giving a proof of Theorem \ref{thm: fibers are reduced}, let us prove that the zero scheme $\ZFHess{H}$ is reduced which means that it is the integral scheme associated to $\FHess{H}$. Recalling that $\FHess{H}$ is an open subvariety of $\AHess{H}(\cong G\times_B H)$, we have $\dim\ZFHess{H}=\dim \FHess{H}=\dim G\times_B H$. This shows that 
\begin{align*}
\dim(G/B\times\mathfrak{g}_{{\rm reg}}) - \dim\ZFHess{H}
= \dim (\mathfrak{g}/H)=|\Phi^+ \setminus \HR{H}|
\end{align*}
which coincides with the rank of the vector bundle $(G\times_B(\mathfrak{g}/H))\times \mathfrak{g}_{{\rm reg}}$ appearing at \eqref{eq:family section}.
Thus, $\ZFHess{H}$ has the expected codimension, and hence it follows that the scheme $\ZFHess{H}$ is Cohen-Macaulay by the same argument as that to prove Proposition \ref{prop: LCI and CM}. Because of Theorem \ref{thm:precup}, the fibers of $\pi \colon \ZFHess{H}\rightarrow \mathcal{G}_{{\rm reg}}$ over the closed points of $\mathcal{G}_{{\rm reg}}$ have dimension equal to $\dim\ZFHess{H}-\dim\mathcal{G}_{{\rm reg}}(=|\HR{H}|)$. Now, it follows from \cite[Sect.\ 23]{Matsumura} that the morphism $\pi \colon \ZFHess{H}\rightarrow \mathcal{G}_{{\rm reg}}$ is flat.

Let $s\coloneqq x_{\emptyset}\in\mathfrak{g}_{\text{reg}}$ be a regular semisimple element of the form \eqref{eq:Jordan decomp} with $J=\emptyset$. 
We know from above that the scheme-theoretic fiber $\ZFHess{H}\times_{\mathcal{G}_{{\rm reg}}}\Spec k(s)$ is isomorphic to $\ZHess{s}{H}$. So by Proposition \ref{prop: zero scheme is reduced}, it is the integral scheme associated to a regular semisimple Hessenberg variety $\Hess{s}{H}$. As mentioned in Section \ref{sec: background and notation}, it is known from \cite{DMPS} that $\Hess{s}{H}$ is non-singular. So it follows that the scheme-theoretic fiber $\ZFHess{H}\times_{\mathcal{G}_{{\rm reg}}}\Spec k(s)$ is smooth over $k(s)\cong \C$. Since the morphism $\pi \colon \ZFHess{H}\rightarrow \mathcal{G}_{{\rm reg}}$ is flat as we saw above, this shows that the morphism $\pi \colon \ZFHess{H}\rightarrow \mathcal{G}_{{\rm reg}}$ is smooth over an open neighborhood $\mathcal{W}$ of $s$ in $\mathcal{G}_{{\rm reg}}$ \cite[Ch.\ VII, Theorem 1.8]{AK}. Since $\mathcal{W}$ is non-singular, the smoothness of $\pi$ over $\mathcal{W}$ shows that the total space $\pi^{-1}(\mathcal{W})$ is non-singular as well (\cite[Ch.\ VI, Corollary 4.5]{AK} and \cite[Proposition 11.24]{AM}). In particular, it is a reduced scheme. Since it is an open subscheme of $\ZFHess{H}$, this shows that the scheme $\ZFHess{H}$ has some reduced points. Since $\ZFHess{H}$ is irreducible and Cohen-Macaulay, it follows that the scheme $\ZFHess{H}$ is reduced (cf.\ the proof of Proposition \ref{prop: zero scheme is reduced}). Thus, we conclude that $\ZFHess{H}$ is the integral scheme associated to $\FHess{H}$.

\vspace{10pt}
\begin{proof}[Proof of Theorem $\ref{thm: fibers are reduced}$]
Recall that the scheme structure on $\FHess{H}$ in the statement of Theorem \ref{thm: fibers are reduced} is the integral scheme associated to $\FHess{H}$, and we already verified that it is the same as the zero scheme $\ZFHess{H}$. So we rather prove that the scheme-theoretic fibers of $\pi\colon\ZFHess{H}\rightarrow \mathcal{G}_{{\rm reg}}$ over the closed points are reduced.
Let $x\in\mathfrak{g}_{{\rm reg}}$ be a regular element. 
Then there exists $g\in G$ such that $\text{Ad}(g)x=x_J$ for some regular element $x_J$ of the form \eqref{eq:Jordan decomp} with $J\subseteq I$. The adjoint automorphism $\text{Ad}(g) \colon \mathfrak{g}_{{\rm reg}}\rightarrow \mathfrak{g}_{{\rm reg}}$ induces an automorphism $\text{Ad}(g) \colon \mathcal{G}_{{\rm reg}}\rightarrow \mathcal{G}_{{\rm reg}}$ which sends $x$ to $x_J$. We denote by $\overline{\text{Ad}(g)}$ the induced isomorphism from $\Spec k(x)$ to $\Spec k(x_J)$. Now we have a commutative diagram
\begin{align}\label{eq: diagram of fibers}
\xymatrix{
\ZFHess{H}\times_{\mathcal{G}_{{\rm reg}}}\Spec k(x) \ar[d]\ar[r]^{\text{id}\times\overline{\text{Ad}(g)}} & \ZFHess{H}\times_{\mathcal{G}_{{\rm reg}}}\Spec k(x_J) \ar[d] \\
\mathcal{G}_{{\rm reg}} \ar[r]^{\text{Ad}(g)} & \mathcal{G}_{{\rm reg}}, 
}
\end{align}
where both of the horizontal morphisms are isomorphisms. As we already pointed out, the scheme-theoretic fiber $\ZFHess{H}\times_{\mathcal{G}_{{\rm reg}}}\Spec k(x_J)$ is isomorphic to $\ZHess{x_J}{H}$. Thus, by Proposition \ref{prop: zero scheme is reduced}, this is a reduced scheme. Hence, so is $\ZFHess{H}\times_{\mathcal{G}_{{\rm reg}}}\Spec k(x)$ by \eqref{eq: diagram of fibers}.
\end{proof}

\vspace{10pt}
Let $\lambda$ be a regular dominant weight of $T$. Then we have $H^0(G/B,\mathcal{L}_{\lambda})\cong V_{\lambda}^*$ by the Borel-Weil theorem as we saw in Section \ref{sec:BWB}, and hence we obtain the Kodaira embedding $G/B\hookrightarrow \P(V_{\lambda})$. By composing this with the inclusion $\Hess{x}{H}\hookrightarrow G/B$, we obtain a projective embedding of $\Hess{x}{H}\hookrightarrow \P(V_{\lambda})$. Now, Theorem \ref{thm: fibers are reduced} together with \cite[Ch.\ III, Theorem 9.9]{Hartshorne} gives us the following corollary.

\begin{corollary}\label{cor: Hilb poly}
Let $\lambda$ be a regular dominant weight. Then the Hilbert polynomial of the embedding $\Hess{x}{H}\hookrightarrow \P(V_{\lambda})$ does not depend on a choice of $x\in\mathfrak{g}_{{\rm reg}}$. 
\end{corollary}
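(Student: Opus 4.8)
The plan is to realize the projective embeddings $\Hess{x}{H}\hookrightarrow\P(V_{\lambda})$, as $x$ ranges over $\mathfrak{g}_{{\rm reg}}$, as the fibers of one flat family of projective schemes over the integral base $\mathcal{G}_{{\rm reg}}$, and then to combine the constancy of Hilbert polynomials in such a family with the reducedness established in Theorem~\ref{thm: fibers are reduced}.

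First I would globalize the Kodaira embedding over the base. The line bundle $\mathcal{L}_{\lambda}$ on $G/B$ pulls back to a line bundle on $G/B\times\mathfrak{g}_{{\rm reg}}$ that is relatively very ample for the first projection, so the Borel--Weil isomorphism $H^0(G/B,\mathcal{L}_{\lambda})\cong V_{\lambda}^*$ yields a closed embedding $G/B\times\mathfrak{g}_{{\rm reg}}\hookrightarrow\P(V_{\lambda})\times\mathfrak{g}_{{\rm reg}}=\P^{\,n}_{\mathcal{G}_{{\rm reg}}}$ over $\mathcal{G}_{{\rm reg}}$, where $n+1=\dim V_{\lambda}$. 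Composing with the closed embedding $\ZFHess{H}\hookrightarrow G/B\times\mathfrak{g}_{{\rm reg}}$ realizes $\ZFHess{H}$ as a closed subscheme of $\P^{\,n}_{\mathcal{G}_{{\rm reg}}}$, and this subscheme is flat over $\mathcal{G}_{{\rm reg}}$ by the flatness of $\pi$ recorded in (the discussion following) Proposition~\ref{prop: flatness}.

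Next I would apply \cite[Ch.\ III, Theorem 9.9]{Hartshorne}. Since $\mathcal{G}_{{\rm reg}}$ is connected---indeed irreducible, being an open subscheme of the affine space $\mathfrak{g}$---and $\ZFHess{H}\hookrightarrow\P^{\,n}_{\mathcal{G}_{{\rm reg}}}$ is flat, the Hilbert polynomial of the scheme-theoretic fiber $\ZFHess{H}\times_{\mathcal{G}_{{\rm reg}}}\Spec k(x)$ is independent of the closed point $x\in\mathcal{G}_{{\rm reg}}$. The final step, which is where the work of this paper enters, is to identify this fiber with the reduced variety $\Hess{x}{H}$: by Theorem~\ref{thm: fibers are reduced} the scheme-theoretic fiber is reduced, hence as an embedded subscheme of $\P(V_{\lambda})$ it coincides with $\Hess{x}{H}$ equipped with its reduced structure. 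Therefore the Hilbert polynomial computed on the fiber is precisely the Hilbert polynomial of the embedding $\Hess{x}{H}\hookrightarrow\P(V_{\lambda})$, and it does not depend on $x$.

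The main obstacle is exactly this last identification. A priori the constancy theorem controls only the Hilbert polynomials of the scheme-theoretic fibers of the flat family, and such fibers could carry non-reduced structure along special (for instance, more nilpotent) directions, so that their Hilbert polynomials differ from those of the underlying reduced subvarieties. Reducedness of every fiber, supplied by Theorem~\ref{thm: fibers are reduced}, is precisely what closes this gap; without it the conclusion for the reduced embeddings $\Hess{x}{H}\hookrightarrow\P(V_{\lambda})$ would not follow from flatness alone.
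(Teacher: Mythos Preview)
Your proposal is correct and follows essentially the same approach as the paper: the authors simply remark that Theorem~\ref{thm: fibers are reduced} together with \cite[Ch.\ III, Theorem 9.9]{Hartshorne} yields the corollary, and you have spelled out precisely this argument---globalize the embedding, use flatness of $\pi$ to get constancy of Hilbert polynomials of the scheme-theoretic fibers, and invoke reducedness to identify those fibers with the varieties $\Hess{x}{H}$. Your explicit discussion of why reducedness is the crucial input is exactly the point the paper leaves implicit.
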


In Section \ref{sec:volume of Plucker embeddings}, we will compute the volume of this embedding of $\Hess{x}{H}$ (i.e.\ the leading coefficient of its Hilbert polynomial) by this corollary.

\smallskip

\subsection{Degenerations to regular nilpotent Hessenberg varieties}\label{subsec:degenerations}
As another corollary of Theorem \ref{thm: fibers are reduced}, we show that any regular Hessenberg variety admits a flat degeneration to a regular nilpotent Hessenberg variety. In this sense, the regular nilpotent Hessenberg varieties are the most degenerated ones among all regular Hessenberg varieties. We say that a scheme $X$ \textit{admits a flat degeneration to a scheme $X_0$} if there exists a flat morphism $\mathfrak{X}\rightarrow\Spec\C[t]$ of schemes such that the scheme-theoretic fiber over the closed point $t\neq0$ is isomorphic to $X$, and the scheme-theoretic fiber over the closed point $0$ is $X_0$. 

Let $x_J\in\mathfrak{g}_{{\rm reg}}$ be a regular element of the form \eqref{eq:Jordan decomp} and $N_0\coloneqq \sum_{i\in I}E_{\alpha_i}$ the regular nilpotent element of the form \eqref{eq:Jordan decomp}. We can take a line in $\mathfrak{g}$ given by
\begin{align}\label{eq:degeneration line}
x(t)\coloneqq tx_J+(1-t)N_0 \quad (t\in\C)
\end{align}
connecting $x_J$ and $N_0$. One can prove that, for any fixed $t\neq0$, the element $x(t)$ is conjugate to $tx_J$ (see Appendix for a proof). In particular, this is a line in $\mathfrak{g}_{{\rm reg}}$.

Recall from Section \ref{subsec: reducedness of fibers} that $\pi \colon \ZFHess{H}\rightarrow\mathcal{G}_{{\rm reg}}$ is the morphism of integral schemes corresponding to the projection $\pi\colon\FHess{H}\rightarrow \mathfrak{g}_{{\rm reg}}$. By abusing notation, let us denote by $\Spec\C[t]$ the integral scheme associated to the line given by \eqref{eq:degeneration line}. Then the set of closed points of the scheme $\ZFHessline{H}\coloneqq \ZFHess{H}\times_{\mathcal{G}_{{\rm reg}}}\Spec\C[t]$ describes the preimage of the line given by \eqref{eq:degeneration line} under the projection $\FHess{H}\rightarrow \mathfrak{g}_{{\rm reg}}$. We have a natural morphism $\pi' \colon \ZFHessline{H}\rightarrow\Spec\C[t]$, and it follows from Proposition \ref {prop: flatness} that $\pi'$ is flat since flatness is stable under base change. By Theorem \ref{thm: fibers are reduced}, the scheme-theoretic fibers of $\pi'$ over the closed points are reduced as well since scheme-theoretic fibers are preserved under base change. Namely, the scheme-theoretic fiber over $t\neq0$ is isomorphic to the integral scheme associated to $\Hess{x_J}{H}$, and the scheme-theoretic fiber over $0$ is the integral scheme associated to $\Hess{N_0}{H}$. Thus, we obtain a flat degeneration of $\Hess{x_J}{H}$ to $\Hess{N_0}{H}$. 

\begin{corollary}\label{coro: flat degenerations}
Any regular Hessenberg variety admits a flat degeneration to a regular nilpotent Hessenberg variety.
\end{corollary}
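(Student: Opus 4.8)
The plan is to assemble the degeneration directly from the flat family $\pi\colon\ZFHess{H}\to\mathcal{G}_{{\rm reg}}$ by restricting it to the line \eqref{eq:degeneration line}. First I would record that the whole line $x(t)=tx_J+(1-t)N_0$ lies in $\mathfrak{g}_{{\rm reg}}$: for $t\neq0$ the Appendix shows that $x(t)$ is $G$-conjugate to $tx_J$, which is regular because $Z_{\mathfrak{g}}(tx_J)=Z_{\mathfrak{g}}(x_J)$, while $x(0)=N_0$ is the regular nilpotent element. Hence the line factors through $\mathcal{G}_{{\rm reg}}$, and pulling back $\pi$ along the associated morphism $\Spec\C[t]\to\mathcal{G}_{{\rm reg}}$ produces $\pi'\colon\ZFHessline{H}\to\Spec\C[t]$, where $\ZFHessline{H}=\ZFHess{H}\times_{\mathcal{G}_{{\rm reg}}}\Spec\C[t]$.

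Next I would invoke two stability-under-base-change properties. Since $\pi$ is flat by Proposition \ref{prop: flatness} and flatness is preserved by base change, $\pi'$ is flat; this provides the flat morphism to $\Spec\C[t]$ demanded by the definition of a flat degeneration. Moreover, scheme-theoretic fibers commute with base change, so each closed fiber of $\pi'$ coincides with the corresponding closed fiber of $\pi$, which is reduced by Theorem \ref{thm: fibers are reduced}; being an irreducible Hessenberg variety, such a fiber is in fact integral. Concretely, the fiber of $\pi'$ over a closed point $t$ is the integral scheme associated to $\Hess{x(t)}{H}$.

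It then remains to identify these fibers. Over $t=0$ the fiber is $\Hess{N_0}{H}$, the regular nilpotent Hessenberg variety. For $t\neq0$ I would use two elementary invariances of the construction $\Hess{\cdot}{H}$: since $H$ is a linear subspace, $\text{Ad}(g^{-1})(cx)\in H$ if and only if $\text{Ad}(g^{-1})x\in H$ for $c\neq0$, so $\Hess{cx}{H}=\Hess{x}{H}$; and the left $G$-action on $G/B$ yields $\Hess{\text{Ad}(g)x}{H}=g\cdot\Hess{x}{H}$, an isomorphic copy. Combining these with the Appendix fact that $x(t)$ is conjugate to $tx_J$, we obtain $\Hess{x(t)}{H}\cong\Hess{tx_J}{H}=\Hess{x_J}{H}$ for every $t\neq0$. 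Thus $\pi'$ realizes a flat degeneration of $\Hess{x_J}{H}$ to $\Hess{N_0}{H}$, which proves the corollary.

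The genuine content sits entirely in the two results already established, namely the flatness in Proposition \ref{prop: flatness} and the fiberwise reducedness in Theorem \ref{thm: fibers are reduced}; the only step requiring care is the identification of the generic fiber. There one must combine the scaling-invariance $\Hess{cx}{H}=\Hess{x}{H}$, the conjugation-equivariance of $\Hess{\cdot}{H}$, and the Appendix computation $x(t)\sim tx_J$, and then verify that the \emph{scheme} structures (not merely the underlying sets) agree for $t\neq0$ --- which is precisely what the reducedness in Theorem \ref{thm: fibers are reduced} guarantees, so I expect no real obstruction beyond bookkeeping.
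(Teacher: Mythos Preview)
Your proposal is correct and follows essentially the same route as the paper: restrict the flat family $\pi\colon\ZFHess{H}\to\mathcal{G}_{{\rm reg}}$ to the line \eqref{eq:degeneration line}, use stability of flatness under base change (Proposition~\ref{prop: flatness}) and fiberwise reducedness (Theorem~\ref{thm: fibers are reduced}) to identify the scheme-theoretic fibers with the integral schemes of the corresponding Hessenberg varieties, and invoke the Appendix to see that the generic fiber is $\Hess{x_J}{H}$. The paper simply asserts the isomorphism of the generic fiber with $\Hess{x_J}{H}$, whereas you spell out the scaling-invariance and conjugation-equivariance of $\Hess{\cdot}{H}$; this is a welcome clarification but not a different argument.
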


For example, by taking a regular semisimple element $s\in\mathfrak{g}_{\text{reg}}$ and the Hessenberg space $H$ for which $\HR{H}=\Delta$, we see that the toric variety $\Hess{s}{H}$ (see Section \ref{sec: regular Hessenberg varieties}) admits a flat degeneration to the Peterson variety (cf.\ \cite[Sect.\ 4.1]{Kostant}). 

\begin{remark}
In Lie type $A$, some related works have appeared in \cite{an-ty} and \cite{ab-de-ga-ha}.	
Anderson-Tymoczko considered in \cite{an-ty} Hessenberg varieties with the scheme structures, which are isomorphic to ours,  described in terms of degeneracy loci of morphisms of vector bundles $($\cite[Theorem 7.6]{an-ty}$)$. Also,  special case $($when $x_J$ is regular semisimple$)$ of Corollary \ref{coro: flat degenerations} was proved in \emph{\cite[Theorem 5.1]{ab-de-ga-ha}}.
\end{remark}

%

\bigskip
\section{Volumes of projective embeddings}\label{sec:volume of Plucker embeddings}
For a regular dominant weight $\lambda$ of $T$, we have the Kodaira embedding $G/B\hookrightarrow \P(V_{\lambda})$ for the line bundle $L_{\lambda}$.
Let $\Hess{x}{H}$ be a regular Hessenberg variety. By composing this with the inclusion $\Hess{x}{H}\hookrightarrow G/B$, we obtain a closed embedding $\Hess{x}{H}\hookrightarrow \P(V_{\lambda})$. We define the volume of $\LB_{\lambda}$ on $\Hess{x}{H}$ by
\begin{align*}
\text{Vol} (\Hess{x}{H},\LB_{\lambda}) \coloneqq  \frac{1}{d!} \deg (\Hess{x}{H}\hookrightarrow \P(V_{\lambda})),
\end{align*}
where $d\coloneqq \dim_{\C}\Hess{x}{H}=|\HR{H}|$. In this section, we give a Lie theoretic description of the volume $\text{Vol} (\Hess{x}{H},\LB_{\lambda})$. Specializing to the case for a regular nilpotent Hessenberg variety, we will also explain a connection of our volume formula to its cohomology ring.

Let $\mathfrak{t}_{\R}^*\coloneqq \text{span}_{\R}\{\alpha\mid \alpha\in\Phi\}\subset \mathfrak{t}^*$ and $R=\Sym(\mathfrak{t}_{\R}^*)$ the symmetric algebra of $\mathfrak{t}_{\R}^*$. Take a $W$-invariant inner product $(\ , \ )$ on $\mathfrak{t}_{\R}^*$. Since this induces a linear isomorphism $\mathfrak{t}_{\R}^*\cong\mathfrak{t}_{\R}$, we can regard $\mathfrak{t}_{\R}^*$ as the linear functions on itself and $R$ as the ring of polynomial functions on $\mathfrak{t}_{\R}^*$. For example, we have $(fg)(\mu)=f(\mu)g(\mu)=(f,\mu)(g,\mu)$ for $f, g\in \mathfrak{t}_{\R}^*$ and $\mu\in \mathfrak{t}_{\R}^*$. Also, for an element $f\in\mathfrak{t}_{\R}^*$, we define a derivation $\partial_{f}$ on $R$ determined by the Leibniz rule and the linear formula $\partial_{f} = (f, \ \cdot \ )$ on $\mathfrak{t}_{\R}^*$. 

Let us introduce the following polynomial in $R$ which appeared in \cite[Sect.\ 11]{AHMMS}\footnote{Strictly speaking, the polynomial $P_{H}$ without the denominator $Z$ is the one introduced in \cite{AHMMS}.}.
\begin{align}\label{eq: def of volume formula}
P_{H} \coloneqq  \frac{1}{Z}\left(\prod_{\alpha\in \Phi^+ \setminus \HR{H}}\partial_{\alpha}\right) \prod_{\alpha\in\Phi^+} \alpha 
\quad \in R,
\end{align}
where $Z\coloneqq \prod_{\alpha\in\Phi^+}(\alpha,\rho)\in\R^{\times}$. Note that $P_{H}$ is a polynomial function on $\mathfrak{t}_{\R}^*$ as we explained above.
For example, when $H=\mathfrak{g}$ (i.e.\ when $\Hess{x}{H}=G/B$), we have $P_{\mathfrak{g}}(\mu) = \prod_{\alpha\in\Phi^+} \frac{(\alpha,\mu)}{(\alpha,\rho)}$ for $\mu\in\mathfrak{t}_{\R}^*$.

\begin{proposition}\label{prop: volume formula}
Let $\Hess{x}{H}$ be a regular Hessenberg variety and $\lambda$ a regular dominant weight of $T$. Then
\begin{align*}
\emph{Vol} (\Hess{x}{H},\LB_{\lambda}) = P_{H}(\lambda).
\end{align*}
\end{proposition}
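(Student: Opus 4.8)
The plan is to reduce the volume to an intersection number on $G/B$ and then evaluate that number by a constant-coefficient differential operator. Since $\Hess{x}{H}$ is reduced (Proposition~\ref{prop: zero scheme is reduced}) and is embedded in $\P(V_\lambda)$ by the very ample bundle $\LB_\lambda$, its degree is the top self-intersection $\deg(\Hess{x}{H}\hookrightarrow\P(V_\lambda)) = \int_{\Hess{x}{H}} c_1(\LB_\lambda)^d$, where $d=|\HR{H}|$ (cf.\ \cite[Ch.\ III]{Hartshorne}). Writing $i\colon \Hess{x}{H}\hookrightarrow G/B$ for the inclusion, the projection formula together with the class computation $[\Hess{x}{H}] = \prod_{\alpha\in\Phi^+\setminus\HR{H}} c_1(\LB_\alpha)$ of Corollary~\ref{cor: cohomology class} lets me rewrite
\begin{align*}
\text{Vol}(\Hess{x}{H},\LB_\lambda) = \frac{1}{d!}\int_{G/B} c_1(\LB_\lambda)^d \prod_{\alpha\in\Phi^+\setminus\HR{H}} c_1(\LB_\alpha).
\end{align*}
This is already manifestly independent of $x$, in agreement with Corollary~\ref{cor: Hilb poly}.

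The heart of the proof is the following intersection formula on $G/B$: writing $N=|\Phi^+|=\dim G/B$ and $q\coloneqq\prod_{\alpha\in\Phi^+}\alpha\in R$, for any weights $\mu_1,\dots,\mu_N$ one has
\begin{align*}
\int_{G/B} c_1(\LB_{\mu_1})\cdots c_1(\LB_{\mu_N}) = \frac{1}{Z}\,\partial_{\mu_1}\cdots\partial_{\mu_N}\,q,
\end{align*}
the right-hand side being a constant because $q$ is homogeneous of degree $N$. Both sides are symmetric multilinear in $(\mu_1,\dots,\mu_N)$ — the left because $\mu\mapsto c_1(\LB_\mu)$ is $\R$-linear, the right by construction — so by polarization in characteristic zero it suffices to check the diagonal identity $\int_{G/B} c_1(\LB_\mu)^N = \frac{1}{Z}\partial_\mu^N q$. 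Homogeneity gives $\partial_\mu^N q = N!\,q(\mu)=N!\prod_{\alpha\in\Phi^+}(\alpha,\mu)$, whence $\frac{1}{Z}\partial_\mu^N q = N!\prod_{\alpha\in\Phi^+}\frac{(\alpha,\mu)}{(\alpha,\rho)}$. On the other hand, when $\mu$ is a regular dominant integral weight $\LB_\mu$ is very ample and $\int_{G/B} c_1(\LB_\mu)^N$ is the degree of $G/B\hookrightarrow\P(V_\mu)$, which equals $N!$ times the leading coefficient of the Hilbert polynomial $m\mapsto \dim H^0(G/B,\IS_{m\mu})=\dim V_{m\mu}$; by the Weyl dimension formula this leading coefficient is $\prod_{\alpha\in\Phi^+}\frac{(\alpha,\mu)}{(\alpha,\rho)}$. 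The two sides thus agree on the Zariski-dense set of regular dominant integral weights, and since both are polynomials in $\mu$ they agree identically.

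Finally I would apply the intersection formula to the multiset consisting of $\lambda$ taken $d$ times together with the roots $\alpha\in\Phi^+\setminus\HR{H}$ (total cardinality $d+(N-d)=N$), obtaining
\begin{align*}
\int_{G/B} c_1(\LB_\lambda)^d \prod_{\alpha\in\Phi^+\setminus\HR{H}} c_1(\LB_\alpha) = \frac{1}{Z}\,\partial_\lambda^{\,d}\Big(\prod_{\alpha\in\Phi^+\setminus\HR{H}}\partial_\alpha\Big)q.
\end{align*}
The polynomial $\big(\prod_{\alpha\in\Phi^+\setminus\HR{H}}\partial_\alpha\big)q$ is homogeneous of degree $d$, so $\partial_\lambda^{\,d}$ replaces it by $d!$ times its value at $\lambda$; after the factor $\tfrac{1}{d!}$ this leaves $\frac{1}{Z}\big(\prod_{\alpha\in\Phi^+\setminus\HR{H}}\partial_\alpha\big)q$ evaluated at $\lambda$, which is $P_{H}(\lambda)$ by \eqref{eq: def of volume formula}. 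The only genuinely nontrivial ingredient is the intersection formula on $G/B$; everything else is the projection formula, homogeneity, and routine polarization. I expect the one point requiring care to be fixing the sign/normalization relating $c_1(\LB_\mu)$ to $\mu$, which is however forced by the positivity of $\LB_\lambda$ and pinned down precisely by the Weyl dimension computation above.
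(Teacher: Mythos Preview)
Your proof is correct and takes a genuinely different route from the paper's. The paper first invokes Corollary~\ref{cor: Hilb poly} to reduce to a regular \emph{semisimple} Hessenberg variety $\Hess{s}{H}$, then applies the Atiyah--Bott--Berline--Vergne localization formula on this smooth $T$-variety to express the volume as a Weyl-group sum; only after manipulating that sum does it recognize the differential-operator structure and match it with the known volume polynomial of $G/B$. You instead bypass localization entirely: you push the intersection number up to $G/B$ via the projection formula and the class formula of Corollary~\ref{cor: cohomology class}, and then prove a clean polarized intersection identity $\int_{G/B}\prod_j c_1(\LB_{\mu_j})=\tfrac{1}{Z}\prod_j\partial_{\mu_j}\,q$ on $G/B$ itself, checking only the diagonal case against the Weyl dimension formula. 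Your approach is more elementary---no equivariant cohomology, no need for the smoothness of $\Hess{s}{H}$---and it makes the independence from $x$ immediate rather than a separate input. The paper's route, on the other hand, yields the intermediate formula \eqref{eq:degree} as a byproduct, which may be of independent interest for computations on the semisimple side.
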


\begin{remark}
The $P_{H}$ as an element of $R$ depends on the choice of the $W$-invariant inner product on $\mathfrak{t}_{\R}^*$, but $P_{H}$ as a function on $\mathfrak{t}_{\R}^*$ does not.
\end{remark}

\begin{proof}[Proof of Proposition $\ref{prop: volume formula}$]
By Corollary \ref{cor: Hilb poly}, the degree of the projective embedding $\Hess{x}{H}\hookrightarrow \P(V_{\lambda})$ does not depend on a choice of a regular element $x$. So it suffices to compute the volume of this embedding of a regular semisimple Hessenberg variety.

Let $s\in\mathfrak{t}$ be a regular semisimple element of $\mathfrak{g}$. Since $\Hess{s}{H}$ is a non-singular projective variety which is preserved by the $T$-action on $G/B$, the degree of a closed embedding into a projective space is exactly its symplectic volume, and it can be computed by the Atiyah-Bott-Berline-Vergne (\cite{at-bo} and \cite{be-ve}) localization formula for integrations. So let us write down this formula in what follows. Recall that the line bundle over $G/B$ corresponding to the embedding $G/B\hookrightarrow \P(V_{\lambda})$ is $\LB_{\lambda}$. So, by the localization formula, we have
\begin{align}\label{eq:degree}
\text{Vol} (\Hess{s}{H},\LB_{\lambda})
= \frac{1}{d!}\int_{\Hess{s}{H}} c_1(L_{\lambda})^d 
= \frac{1}{d!}\sum_{w\in W} \frac{(-w\lambda)^d}{\prod_{\alpha\in \HR{H}}(-w\alpha)}.
\end{align}
Here, in the RHS, we think of $w\in W$ as a $T$-fixed point $wB\in G/B$, and the numerator and the denominator are the weights of the $T$-actions on the fiber $\LB_{\lambda}|_{wB}$ and the tangent space $T_{wB} \Hess{s}{H}$ respectively (see \cite{DMPS} for the description of $T_{wB} \Hess{s}{H}$), where we think of each fraction as an element of the quotient field $K$ of the symmetric algebra $R=\Sym(\mathfrak{t}_{\R}^*)$. Now we think of the RHS of \eqref{eq:degree} as a polynomial in the coefficients $\lambda_1,\ldots,\lambda_n$ of $\lambda$ with respect to the fundamental weights $\varpi_1,\ldots,\varpi_n$. More precisely, let
\begin{align*}
P_{\Hess{s}{H}}(y_1,\dots,y_n)\coloneqq \frac{1}{d!}\sum_{w\in W} \frac{(-wy)^d}{\prod_{\alpha\in \HR{H}}(-w\alpha)} \in K[y_1,\dots,y_n],
\end{align*}
where $y\coloneqq \sum_{i=1}^n y_i\varpi_i$. For any $(m_1,\ldots,m_n)\in\Z^n$, we know from \cite{at-bo} and \cite{be-ve} that $P_{\Hess{s}{H}}(m_1,\dots,m_n)$ is the value of the integration $\frac{1}{d!}\int_{\Hess{s}{H}}c_1(L_{\sum_{i=1} ^n m_i \varpi_i})^d \in\R$. From this, it is not difficult to verify that 
$P_{\Hess{s}{H}}(r_1,\dots,r_n)\in\R$  for all $(r_1,\ldots,r_n)\in\R^n$ by continuity. This shows that we in fact have
\begin{align*}
P_{\Hess{s}{H}}(y_1,\dots,y_n) \in \R[y_1,\dots,y_n].
\end{align*}
By construction, we have
\begin{align}\label{eq: volume first expression}
\text{Vol} (\Hess{s}{H},\LB_{\lambda}) = P_{\Hess{s}{H}}(\lambda_1,\dots,\lambda_n)
\end{align}
for any regular dominant weight $\lambda=\sum_{i=1}^{n}\lambda_i\varpi_i$.

Now we associate to each root $\alpha$ a derivation on $K[y_1,\dots,y_n]$ over $K$ by
\begin{align*}
\partial'_{\alpha} \coloneqq  \sum_{k=1}^n(\alpha,\alpha_k^{\vee})\frac{\partial}{\partial y_k},
\end{align*}
where $\alpha_k^{\vee}=\frac{2}{(\alpha_k,\alpha_k)}\alpha_k$. Then we have
$\partial'_{\alpha}(wy) = w\alpha$
for $w\in W$, where $y=\sum_{i=1}^n y_i\varpi_i$ as above. Now it is straightforward to see 
\begin{align}\label{eq: volume using G/B}
P_{\Hess{s}{H}}(y_1,\dots,y_n) = \left(\prod_{\alpha\in \Phi^+ \setminus \HR{H}}\partial'_{\alpha}\right) P_{G/B}(y_1,\dots,y_n).
\end{align}
Here, $P_{G/B}(y_1,\dots,y_n)$ describes the volume formula of the embedding $G/B\hookrightarrow \P(V_{\lambda})$, and it is well-known that 
\begin{align*}
\text{Vol} (G/B,\LB_{\lambda}) 
= \prod_{\alpha\in\Phi^+} \frac{(\alpha,\lambda)}{(\alpha,\rho)}
= \frac{1}{Z} \prod_{\alpha\in\Phi^+} (\alpha,\lambda)
\end{align*}
for any regular dominant weight $\lambda=\sum_{i=1}^{n} \lambda_i\varpi_i$. So,  it follows that
\begin{align}\label{eq:poly exp of P G/B}
P_{G/B}(y_1,\dots,y_n)
= \frac{1}{Z} \prod_{\alpha\in\Phi^+} (\alpha,y)
\end{align}
since both sides are polynomials in $y_1,\ldots,y_n$ over $\R$. Recalling that we regard $R=\Sym(\mathfrak{t}_{\R}^*)$ as the ring of polynomial functions on $\mathfrak{t}_{\R}^*$ through the inner product $(\ , \ )$ (see the explanation above \eqref{eq: def of volume formula} for details), we can express \eqref{eq:poly exp of P G/B} as
\begin{align*}
P_{G/B}
= \frac{1}{Z} \prod_{\alpha\in\Phi^+} \alpha \ \ \in R
\end{align*}
under the ring isomorphism $\psi\colon R\stackrel{\sim}{\rightarrow}\R[y_1,\dots,y_n]$ given by $\psi(\alpha_i^{\vee})=y_i$ for $i=1,\ldots,n$. 
It is straightforward to verify that this isomorphism $\psi$ fits into the following commutative diagram.
\begin{align*}
\xymatrix{
R \ar[d]_{\partial_{\alpha}}\ar[r]^{\hspace{-25pt}\psi} & \R[y_1,\dots,y_n] \ar[d]^{\partial'_{\alpha}} \\
R \ar[r]^{\hspace{-25pt}\psi} & \R[y_1,\dots,y_n]
}
\end{align*}
Namely, the derivation $\partial'_{\alpha}$ on $\R[y_1,\dots,y_n]$ is identical to the derivation $\partial_{\alpha}$ on $R$ defined above \eqref{eq: def of volume formula} (i.e.\ the one which satisfies $\partial_{\alpha} = (\alpha, \ \cdot \ )$ on $\mathfrak{t}_{\R}^*$) under the isomorphism $\psi$.
Thus, we obtain from \eqref{eq: volume using G/B} that
\begin{align}\label{eq: volume using G/B 2}
P_{\Hess{s}{H}} = \left(\prod_{\alpha\in \Phi^+ \setminus \HR{H}}\partial_{\alpha}\right) P_{G/B} = \frac{1}{Z} \left(\prod_{\alpha\in \Phi^+ \setminus \HR{H}}\partial_{\alpha}\right) \prod_{\alpha\in\Phi^+} \alpha
\end{align}
as elements of $R$, and this is exactly $P_H$ given in \eqref{eq: def of volume formula}. Hence, by \eqref{eq: volume first expression}, we obtain
\begin{align*}
\text{Vol} (\Hess{s}{H},\LB_{\lambda}) = P_{H}(\lambda)
\end{align*}
as desired.
\end{proof}

\begin{example}
\emph{
Let $G=\text{SL}_3(\C)$. Take a regular semisimple element $s$ in $\mathfrak{sl}_3(\C)$ and a Hessenberg space $H\subseteq \mathfrak{sl}_3(\C)$ such that $\Phi^+_H=\Delta$. Then $X\coloneqq \Hess{s}{H}$ is a $2$-dimensional toric variety which is called a permutohedral variety (see Section \ref{sec: background and notation}). More precisely, let $v_1,\ldots,v_6$ be elements of $N=\Z^2$ given by $v_1=(1,0)$, $v_2=(1,1)$, $v_3=(0,1)$, $v_4=(-1,0)$, $v_5=(-1,-1)$, $v_6=(0,-1)$. Consider the fan in $N_{\R}=N\otimes_{\Z}\R=\R^2$ whose maximal cones are $\sigma_{12}, \sigma_{23}, \sigma_{34}, \sigma_{45}, \sigma_{56}, \sigma_{61}$, where $\sigma_{ij}$ is the cone spanned by $v_i$ and $v_j$. Then $X$ is the toric variety associated with this fan (see Figure \ref{pic:fan and polytope}).
}

\emph{
Set $\lambda=\rho=\varpi_1+\varpi_2$, where $\varpi_1, \varpi_2$ are the fundamental weights of $T$, and we compute the volume $\text{Vol}(X,L_\rho)$ in two ways. Let us first compute it from the theory of toric varieties (see \cite[Sects.\ 6.1 and 13.4]{co-li-sc-11} for details). We have $L_\rho|_X=K_X^{\vee}$, where $K_X^{\vee}$ is the anti-canonical line bundle. The divisor corresponding to $K_X^{\vee}$ is linearly equivalent to the divisor $D=\sum_{i=1}^6 D_i$, where $D_i$ is the torus invariant divisor on $X$ corresponding to $v_i.$ This determines a polyhedral $P_D \coloneqq \{u\in M_\R \mid \langle u,v_i\rangle\geq -1\ (1\leq i\leq 6)\}$, where $M=\Hom(N, \Z)$ and $M_{\R}=M\otimes_\Z\R$. It is easy to verify that $P_D$ in fact is a polytope in this case (see Figure \ref{pic:fan and polytope}).
\begin{figure}[htbp]
\[
{\unitlength 0.1in%
\begin{picture}(50.0000,20.0000)(4.0000,-26.0000)%
%
\special{pn 8}%
\special{pa 1600 2600}%
\special{pa 1600 600}%
\special{fp}%
\special{sh 1}%
\special{pa 1600 600}%
\special{pa 1580 667}%
\special{pa 1600 653}%
\special{pa 1620 667}%
\special{pa 1600 600}%
\special{fp}%
%
\special{pn 8}%
\special{pa 600 1600}%
\special{pa 2600 1600}%
\special{fp}%
\special{sh 1}%
\special{pa 2600 1600}%
\special{pa 2533 1580}%
\special{pa 2547 1600}%
\special{pa 2533 1620}%
\special{pa 2600 1600}%
\special{fp}%
%
\special{pn 8}%
\special{pa 600 2600}%
\special{pa 2600 600}%
\special{dt 0.045}%
\put(19.1000,-17.6000){\makebox(0,0)[lb]{$v_1$}}%
\put(14.2000,-12.9000){\makebox(0,0)[lb]{$v_3$}}%
\put(20.0000,-13.2000){\makebox(0,0)[lb]{$v_2$}}%
\put(11.1000,-17.2000){\makebox(0,0)[lb]{$v_4$}}%
\put(16.2000,-21.2000){\makebox(0,0)[lb]{$v_6$}}%
\put(11.8000,-21.3000){\makebox(0,0)[lb]{$v_5$}}%
\put(4.0000,-8.0000){\makebox(0,0)[lb]{$N_{\R}$}}%
%
\special{pn 8}%
\special{pa 630 670}%
\special{pa 630 870}%
\special{fp}%
\special{pa 630 870}%
\special{pa 430 870}%
\special{fp}%
%
\special{pn 8}%
\special{pa 1600 1600}%
\special{pa 2000 1600}%
\special{fp}%
\special{sh 1}%
\special{pa 2000 1600}%
\special{pa 1933 1580}%
\special{pa 1947 1600}%
\special{pa 1933 1620}%
\special{pa 2000 1600}%
\special{fp}%
\special{pa 1600 1600}%
\special{pa 1600 1200}%
\special{fp}%
\special{sh 1}%
\special{pa 1600 1200}%
\special{pa 1580 1267}%
\special{pa 1600 1253}%
\special{pa 1620 1267}%
\special{pa 1600 1200}%
\special{fp}%
\special{pa 1600 1600}%
\special{pa 1200 1600}%
\special{fp}%
\special{sh 1}%
\special{pa 1200 1600}%
\special{pa 1267 1620}%
\special{pa 1253 1600}%
\special{pa 1267 1580}%
\special{pa 1200 1600}%
\special{fp}%
\special{pa 1600 1600}%
\special{pa 1600 2000}%
\special{fp}%
\special{sh 1}%
\special{pa 1600 2000}%
\special{pa 1620 1933}%
\special{pa 1600 1947}%
\special{pa 1580 1933}%
\special{pa 1600 2000}%
\special{fp}%
\special{pa 1600 1600}%
\special{pa 2000 1200}%
\special{fp}%
\special{sh 1}%
\special{pa 2000 1200}%
\special{pa 1939 1233}%
\special{pa 1962 1238}%
\special{pa 1967 1261}%
\special{pa 2000 1200}%
\special{fp}%
\special{pa 1600 1600}%
\special{pa 1200 2000}%
\special{fp}%
\special{sh 1}%
\special{pa 1200 2000}%
\special{pa 1261 1967}%
\special{pa 1238 1962}%
\special{pa 1233 1939}%
\special{pa 1200 2000}%
\special{fp}%
\put(31.7000,-8.0000){\makebox(0,0)[lb]{$M_{\R}$}}%
%
\special{pn 8}%
\special{pa 3430 670}%
\special{pa 3430 870}%
\special{fp}%
\special{pa 3430 870}%
\special{pa 3230 870}%
\special{fp}%
%
\special{pn 0}%
\special{sh 0.300}%
\special{pa 4000 1200}%
\special{pa 4400 1200}%
\special{pa 4800 1600}%
\special{pa 4800 2000}%
\special{pa 4400 2000}%
\special{pa 4000 1600}%
\special{pa 4000 1200}%
\special{ip}%
\special{pn 8}%
\special{pa 4000 1200}%
\special{pa 4400 1200}%
\special{pa 4800 1600}%
\special{pa 4800 2000}%
\special{pa 4400 2000}%
\special{pa 4000 1600}%
\special{pa 4000 1200}%
\special{pa 4400 1200}%
\special{fp}%
%
\special{pn 8}%
\special{pa 3400 1600}%
\special{pa 5400 1600}%
\special{fp}%
\special{sh 1}%
\special{pa 5400 1600}%
\special{pa 5333 1580}%
\special{pa 5347 1600}%
\special{pa 5333 1620}%
\special{pa 5400 1600}%
\special{fp}%
%
\special{pn 8}%
\special{pa 4400 2600}%
\special{pa 4400 600}%
\special{fp}%
\special{sh 1}%
\special{pa 4400 600}%
\special{pa 4380 667}%
\special{pa 4400 653}%
\special{pa 4420 667}%
\special{pa 4400 600}%
\special{fp}%
%
\special{sh 1.000}%
\special{ia 4400 1600 20 20 0.0000000 6.2831853}%
\special{pn 8}%
\special{ia 4400 1600 20 20 0.0000000 6.2831853}%
%
\special{sh 1.000}%
\special{ia 4800 1600 20 20 0.0000000 6.2831853}%
\special{pn 8}%
\special{ia 4800 1600 20 20 0.0000000 6.2831853}%
%
\special{sh 1.000}%
\special{ia 4800 2000 20 20 0.0000000 6.2831853}%
\special{pn 8}%
\special{ia 4800 2000 20 20 0.0000000 6.2831853}%
%
\special{sh 1.000}%
\special{ia 4400 2000 20 20 0.0000000 6.2831853}%
\special{pn 8}%
\special{ia 4400 2000 20 20 0.0000000 6.2831853}%
%
\special{sh 1.000}%
\special{ia 4000 2000 20 20 0.0000000 6.2831853}%
\special{pn 8}%
\special{ia 4000 2000 20 20 0.0000000 6.2831853}%
%
\special{sh 1.000}%
\special{ia 4000 1600 20 20 0.0000000 6.2831853}%
\special{pn 8}%
\special{ia 4000 1600 20 20 0.0000000 6.2831853}%
%
\special{sh 1.000}%
\special{ia 4000 1200 20 20 0.0000000 6.2831853}%
\special{pn 8}%
\special{ia 4000 1200 20 20 0.0000000 6.2831853}%
%
\special{sh 1.000}%
\special{ia 4400 1200 20 20 0.0000000 6.2831853}%
\special{pn 8}%
\special{ia 4400 1200 20 20 0.0000000 6.2831853}%
%
\special{sh 1.000}%
\special{ia 4800 1200 20 20 0.0000000 6.2831853}%
\special{pn 8}%
\special{ia 4800 1200 20 20 0.0000000 6.2831853}%
%
\special{sh 1.000}%
\special{ia 4800 800 20 20 0.0000000 6.2831853}%
\special{pn 8}%
\special{ia 4800 800 20 20 0.0000000 6.2831853}%
%
\special{sh 1.000}%
\special{ia 5200 800 20 20 0.0000000 6.2831853}%
\special{pn 8}%
\special{ia 5200 800 20 20 0.0000000 6.2831853}%
%
\special{sh 1.000}%
\special{ia 5200 1200 20 20 0.0000000 6.2831853}%
\special{pn 8}%
\special{ia 5200 1200 20 20 0.0000000 6.2831853}%
%
\special{sh 1.000}%
\special{ia 5200 2000 20 20 0.0000000 6.2831853}%
\special{pn 8}%
\special{ia 5200 2000 20 20 0.0000000 6.2831853}%
%
\special{sh 1.000}%
\special{ia 5200 2400 20 20 0.0000000 6.2831853}%
\special{pn 8}%
\special{ia 5200 2400 20 20 0.0000000 6.2831853}%
%
\special{sh 1.000}%
\special{ia 4800 2400 20 20 0.0000000 6.2831853}%
\special{pn 8}%
\special{ia 4800 2400 20 20 0.0000000 6.2831853}%
%
\special{sh 1.000}%
\special{ia 4400 2400 20 20 0.0000000 6.2831853}%
\special{pn 8}%
\special{ia 4400 2400 20 20 0.0000000 6.2831853}%
%
\special{sh 1.000}%
\special{ia 4000 2400 20 20 0.0000000 6.2831853}%
\special{pn 8}%
\special{ia 4000 2400 20 20 0.0000000 6.2831853}%
%
\special{sh 1.000}%
\special{ia 3600 2400 20 20 0.0000000 6.2831853}%
\special{pn 8}%
\special{ia 3600 2400 20 20 0.0000000 6.2831853}%
%
\special{sh 1.000}%
\special{ia 3600 2000 20 20 0.0000000 6.2831853}%
\special{pn 8}%
\special{ia 3600 2000 20 20 0.0000000 6.2831853}%
%
\special{sh 1.000}%
\special{ia 3600 1600 20 20 0.0000000 6.2831853}%
\special{pn 8}%
\special{ia 3600 1600 20 20 0.0000000 6.2831853}%
%
\special{sh 1.000}%
\special{ia 3600 1200 20 20 0.0000000 6.2831853}%
\special{pn 8}%
\special{ia 3600 1200 20 20 0.0000000 6.2831853}%
%
\special{sh 1.000}%
\special{ia 4000 800 20 20 0.0000000 6.2831853}%
\special{pn 8}%
\special{ia 4000 800 20 20 0.0000000 6.2831853}%
%
\special{sh 1.000}%
\special{ia 3600 800 20 20 0.0000000 6.2831853}%
\special{pn 8}%
\special{ia 3600 800 20 20 0.0000000 6.2831853}%
%
\special{sh 1.000}%
\special{ia 5200 1600 20 20 0.0000000 6.2831853}%
\special{pn 8}%
\special{ia 5200 1600 20 20 0.0000000 6.2831853}%
%
\special{sh 1.000}%
\special{ia 4400 800 20 20 0.0000000 6.2831853}%
\special{pn 8}%
\special{ia 4400 800 20 20 0.0000000 6.2831853}%
%
\special{sh 1.000}%
\special{ia 2400 2000 20 20 0.0000000 6.2831853}%
\special{pn 8}%
\special{ia 2400 2000 20 20 0.0000000 6.2831853}%
%
\special{sh 1.000}%
\special{ia 2400 2400 20 20 0.0000000 6.2831853}%
\special{pn 8}%
\special{ia 2400 2400 20 20 0.0000000 6.2831853}%
%
\special{sh 1.000}%
\special{ia 2000 2000 20 20 0.0000000 6.2831853}%
\special{pn 8}%
\special{ia 2000 2000 20 20 0.0000000 6.2831853}%
%
\special{sh 1.000}%
\special{ia 2000 2400 20 20 0.0000000 6.2831853}%
\special{pn 8}%
\special{ia 2000 2400 20 20 0.0000000 6.2831853}%
%
\special{sh 1.000}%
\special{ia 1600 2000 20 20 0.0000000 6.2831853}%
\special{pn 8}%
\special{ia 1600 2000 20 20 0.0000000 6.2831853}%
%
\special{sh 1.000}%
\special{ia 1600 2400 20 20 0.0000000 6.2831853}%
\special{pn 8}%
\special{ia 1600 2400 20 20 0.0000000 6.2831853}%
%
\special{sh 1.000}%
\special{ia 1200 2000 20 20 0.0000000 6.2831853}%
\special{pn 8}%
\special{ia 1200 2000 20 20 0.0000000 6.2831853}%
%
\special{sh 1.000}%
\special{ia 1200 2400 20 20 0.0000000 6.2831853}%
\special{pn 8}%
\special{ia 1200 2400 20 20 0.0000000 6.2831853}%
%
\special{sh 1.000}%
\special{ia 800 2000 20 20 0.0000000 6.2831853}%
\special{pn 8}%
\special{ia 800 2000 20 20 0.0000000 6.2831853}%
%
\special{sh 1.000}%
\special{ia 800 2400 20 20 0.0000000 6.2831853}%
\special{pn 8}%
\special{ia 800 2400 20 20 0.0000000 6.2831853}%
%
\special{sh 1.000}%
\special{ia 800 1200 20 20 0.0000000 6.2831853}%
\special{pn 8}%
\special{ia 800 1200 20 20 0.0000000 6.2831853}%
%
\special{sh 1.000}%
\special{ia 800 1600 20 20 0.0000000 6.2831853}%
\special{pn 8}%
\special{ia 800 1600 20 20 0.0000000 6.2831853}%
%
\special{sh 1.000}%
\special{ia 1200 1200 20 20 0.0000000 6.2831853}%
\special{pn 8}%
\special{ia 1200 1200 20 20 0.0000000 6.2831853}%
%
\special{sh 1.000}%
\special{ia 1200 1600 20 20 0.0000000 6.2831853}%
\special{pn 8}%
\special{ia 1200 1600 20 20 0.0000000 6.2831853}%
%
\special{sh 1.000}%
\special{ia 1600 1200 20 20 0.0000000 6.2831853}%
\special{pn 8}%
\special{ia 1600 1200 20 20 0.0000000 6.2831853}%
%
\special{sh 1.000}%
\special{ia 1600 1600 20 20 0.0000000 6.2831853}%
\special{pn 8}%
\special{ia 1600 1600 20 20 0.0000000 6.2831853}%
%
\special{sh 1.000}%
\special{ia 2000 1200 20 20 0.0000000 6.2831853}%
\special{pn 8}%
\special{ia 2000 1200 20 20 0.0000000 6.2831853}%
%
\special{sh 1.000}%
\special{ia 2000 1600 20 20 0.0000000 6.2831853}%
\special{pn 8}%
\special{ia 2000 1600 20 20 0.0000000 6.2831853}%
%
\special{sh 1.000}%
\special{ia 2400 1200 20 20 0.0000000 6.2831853}%
\special{pn 8}%
\special{ia 2400 1200 20 20 0.0000000 6.2831853}%
%
\special{sh 1.000}%
\special{ia 2400 1600 20 20 0.0000000 6.2831853}%
\special{pn 8}%
\special{ia 2400 1600 20 20 0.0000000 6.2831853}%
%
\special{sh 1.000}%
\special{ia 2400 800 20 20 0.0000000 6.2831853}%
\special{pn 8}%
\special{ia 2400 800 20 20 0.0000000 6.2831853}%
%
\special{sh 1.000}%
\special{ia 2000 800 20 20 0.0000000 6.2831853}%
\special{pn 8}%
\special{ia 2000 800 20 20 0.0000000 6.2831853}%
%
\special{sh 1.000}%
\special{ia 1600 800 20 20 0.0000000 6.2831853}%
\special{pn 8}%
\special{ia 1600 800 20 20 0.0000000 6.2831853}%
%
\special{sh 1.000}%
\special{ia 1200 800 20 20 0.0000000 6.2831853}%
\special{pn 8}%
\special{ia 1200 800 20 20 0.0000000 6.2831853}%
%
\special{sh 1.000}%
\special{ia 800 800 20 20 0.0000000 6.2831853}%
\special{pn 8}%
\special{ia 800 800 20 20 0.0000000 6.2831853}%
\end{picture}}%
\]
\caption{The fan with the lattice points on the left and the polytope $P_D$ on the right.}
\label{pic:fan and polytope}
\end{figure}
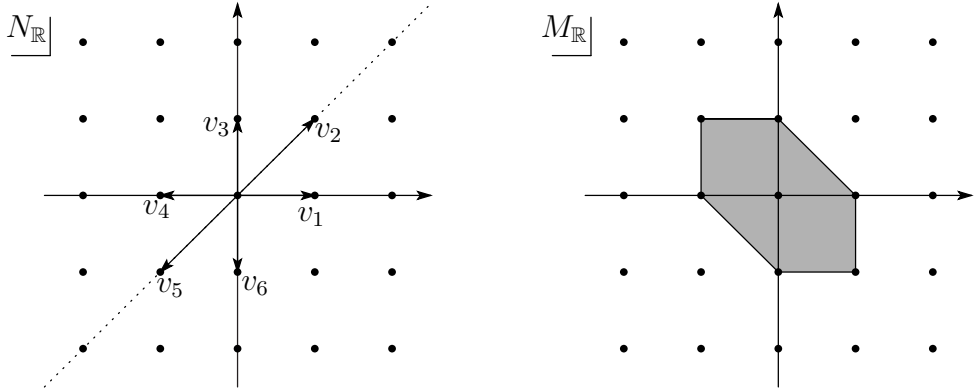 
We know that $\text{Vol}(X, L_D)=\text{Vol}(P_D),$ where $L_D(\cong K_X^{\vee})$ is the line bundle associated to the divisor $D$ and $\text{Vol}(P_D)$ is the volume of $P_D$ in $M_\R$ on which the Lebesgue measure is normalized so that each cell (i.e. each square in Figure \ref{pic:fan and polytope}) of the lattice in $M_{\R}$ has a unit volume. Hence, we deduce that $\text{Vol}(X,L_\rho)=3$. 
}
 
\emph{
On the other hand, we have 
\begin{align*}
P_{\mathfrak{sl}_3(\C)}=\frac{1}{2}\alpha_1\alpha_2(\alpha_1+\alpha_2).
\end{align*}
Hence, by Proposition \ref{prop: volume formula}, we obtain
\begin{align*}
\text{Vol}(X,L_\rho)=(\partial_{\alpha_1+\alpha_2} P_{\mathfrak{sl}_3(\C)})(\rho)=\Big(\frac{1}{2}\alpha_1^2+2\alpha_1\alpha_2+\frac{1}{2}\alpha_2^2\Big)(\varpi_1+\varpi_2)=3.
\end{align*}
Thus, the two ways of computing the volume coincide. 
}
\end{example}

\vspace{20pt}
As an application of Proposition \ref{prop: volume formula}, let us explain a relation between our volume formula and the cohomology rings of regular nilpotent Hessenberg varieties. Let $N_0$ be a regular nilpotent element of $\mathfrak{g}$ and $\Hess{N_0}{H}$ the associated regular nilpotent Hessenberg variety. In \cite{AHMMS}, it is shown that $H^*(\Hess{N_0}{H};\R)$ is a Poincar\'e duality algebra generated by degree 2 elements and that
\begin{align}\label{eq:AHMMS isom}
H^*(\Hess{N_0}{H};\R) \cong R/\text{Ann}(P_{H})
\end{align}
as graded rings over $\R$, where we have $R=\Sym(\mathfrak{t}_{\R}^*)$. The annihilator $\text{Ann}(P_{H})$ is defined as
\begin{align*}
\text{Ann}(P_{H}) = \{ F \in R \mid \partial_{F}(P_{H})=0 \},
\end{align*}
where we associate to an element $F\in R$ a differential operator $\partial_{F}$ on $R$ as a polynomial of derivations by extending  the linear association $f(\in\mathfrak{t}_{\R}^*)\mapsto \partial_{f}$ explained above \eqref{eq: def of volume formula}. Namely, the polynomial $P_{H}$ completely determines the algebra $H^*(\Hess{N_0}{H};\R)$ (see \cite[Sect.\ 1]{Kaveh} for more general treatment of Poincar\'e duality algebras). Now, Proposition \ref{prop: volume formula} shows that, for any regular dominant weight $\lambda$ of $T$, the value $P_{H}(\lambda)\in\R$ is the volume of the embedding $\Hess{N_0}{H}\hookrightarrow \P(V_{\lambda})$. In this sense, the volume formula for $\Hess{N_0}{H}$ describes the algebra structure of the cohomology ring $H^*(\Hess{N_0}{H};\R)$ via the isomorphism \eqref{eq:AHMMS isom} proven in \cite{AHMMS}.

\bigskip
\section*{Appendix}
\setcounter{equation}{0}
\setcounter{theorem}{0}
\renewcommand{\thesection}{A}

Recall that $I$ is the Dynkin diagram of $\mathfrak{g}$. For $J\subseteq I$ (regarded as a full-subgraph), let $x_J=s_J+\sum_{i\in J}E_{\alpha_i}\in\mathfrak{g}_{{\rm reg}}$ be a regular element of the form \eqref{eq:Jordan decomp} and $N_0\coloneqq \sum_{i\in I}E_{\alpha_i}$ the regular nilpotent element of the form \eqref{eq:Jordan decomp}. Namely, $s_J$ is an element of $\mathfrak{t}$ such that
\begin{align}\label{eq:app centralizer of s_J}
Z_{\mathfrak{g}}(s_J) = \mathfrak{g}(J) \oplus Z,
\end{align}
where $\mathfrak{g}(J)$ is the complex semisimple Lie algebra corresponding to the Dynkin diagram $J$, and $Z\subseteq\mathfrak{t}$ is the center of $Z_{\mathfrak{g}}(s_J)$. Let us take a line in $\mathfrak{g}$ given by
\begin{align*}
x(t)\coloneqq tx_J+(1-t)N_0 \quad (t\in\C).
\end{align*}
We prove the following claim which we used in Section \ref{subsec:degenerations}.
\begin{lemma}\label{lem: app main claim}
For any fixed $t\neq0$, the element $x(t)$ is conjugate to $tx_J$.
\end{lemma}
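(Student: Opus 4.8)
The plan is to exhibit the conjugation explicitly, working in the standard parabolic attached to $s_J$. First I would record the expansion
\[
x(t)=t s_J+n_J+(1-t)\sum_{i\in I\setminus J}E_{\alpha_i},
\]
obtained by collecting the simple-root vectors in $t x_J+(1-t)N_0$ (the coefficient of $E_{\alpha_i}$ is $1$ for $i\in J$ and $1-t$ for $i\in I\setminus J$). The crucial structural input is \eqref{eq:app centralizer of s_J}: since $Z_{\mathfrak{g}}(s_J)=\mathfrak{g}(J)\oplus Z$, a root $\gamma$ satisfies $\gamma(s_J)=0$ if and only if $\gamma\in\Phi(J)$, where $\Phi(J)$ denotes the roots of $\mathfrak{g}(J)$ (the $\Z$-combinations of the $\alpha_j$, $j\in J$). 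Consequently $\mathfrak{u}_J\coloneqq\bigoplus_{\gamma\in\Phi^+\setminus\Phi(J)}\mathfrak{g}_{\gamma}$ is exactly the subspace on which $\text{ad}(s_J)$ acts invertibly; it is the nilradical of the parabolic whose Levi factor is $Z_{\mathfrak{g}}(s_J)$, and I write $U_J=\exp(\mathfrak{u}_J)$ for the corresponding unipotent group. Since $n_J\in\mathfrak{g}(J)$ while the extra term $(1-t)\sum_{i\in I\setminus J}E_{\alpha_i}$ lies in $\mathfrak{u}_J$, both $x(t)$ and $t s_J+n_J$ lie in the affine subspace $t s_J+n_J+\mathfrak{u}_J$.

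Stage one reduces $t x_J$ to the base point $t s_J+n_J$. As the $\alpha_j$ ($j\in J$) are linearly independent, I can choose $h\in\mathfrak{t}$ with $e^{\alpha_j(h)}=t^{-1}$ for all $j\in J$ (possible since $t\neq0$); then $\text{Ad}(\exp(h))\in T$ fixes $s_J$ and rescales each $E_{\alpha_j}$ ($j\in J$) by $t^{-1}$, giving $\text{Ad}(\exp(h))(t x_J)=t s_J+n_J$. Thus it remains to prove that $x(t)$ is conjugate to $t s_J+n_J$.

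Stage two shows that $x(t)$ and $t s_J+n_J$ lie in a single $U_J$-orbit. I would consider the orbit map $U_J\to t s_J+n_J+\mathfrak{u}_J$, $u\mapsto\text{Ad}(u)(t s_J+n_J)$, which is well defined because $\mathfrak{u}_J$ is an ideal of the parabolic and $t s_J, n_J$ lie in its Levi factor. Its differential at the identity is $\xi\mapsto[\xi,\,t s_J+n_J]=-\text{ad}(t s_J)\xi+[\xi,n_J]$ on $\mathfrak{u}_J$; here $-\text{ad}(t s_J)$ is diagonal in the root grading with nonzero eigenvalues $-t\gamma(s_J)$ (using the structural input and $t\neq0$), while $\xi\mapsto[\xi,n_J]$ strictly raises height, so the sum is invertible. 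Hence the stabilizer of $t s_J+n_J$ in $U_J$ is trivial and the orbit has dimension $\dim\mathfrak{u}_J$. Since orbits of unipotent groups acting on affine varieties are Zariski closed, this full-dimensional orbit must equal the irreducible affine space $t s_J+n_J+\mathfrak{u}_J$, which contains $x(t)$; this yields $u\in U_J$ with $\text{Ad}(u)(t s_J+n_J)=x(t)$ and finishes the proof. Equivalently, one avoids the closedness input and cancels the terms of $x(t)-(t s_J+n_J)$ one height at a time: a term in $\mathfrak{g}_{\gamma}$ is removed by conjugating with a suitable element of $\mathfrak{g}_{\gamma}$ (using $\gamma(s_J)\neq0$), and the newly created terms lie strictly higher and still outside $\Phi(J)$, so the process terminates.

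The main obstacle is Stage two, specifically checking that every term to be cancelled stays inside $\mathfrak{u}_J$, where $\text{ad}(s_J)$ is invertible. This is where \eqref{eq:app centralizer of s_J} is essential: it guarantees $\gamma(s_J)\neq0$ for all $\gamma\in\Phi^+\setminus\Phi(J)$, and one must verify that neither the extra simple-root terms of $x(t)$ nor the higher-height terms produced during the cancellation ever fall into a $\Phi(J)$-direction, i.e.\ that they always retain a positive coefficient on some $\alpha_i$ with $i\in I\setminus J$. By contrast, Stage one is a routine torus computation and the expansion of $x(t)$ is immediate.
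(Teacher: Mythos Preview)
Your proof is correct, and the overall two-step strategy matches the paper's: both reduce to (i) a torus rescaling connecting $tx_J$ and $ts_J+n_J$, and (ii) a unipotent conjugation in $U_J$ connecting $x(t)$ and $ts_J+n_J$, though the paper performs these in the opposite order. For step (ii) the paper (via its Lemma~A.2) carries out exactly the explicit height-by-height cancellation you sketch as your alternative at the end. Your primary argument for (ii) is different: you compute that the differential of the orbit map is invertible and then invoke closedness of unipotent orbits on affine varieties (Kostant--Rosenlicht) to conclude that the $U_J$-orbit of $ts_J+n_J$ fills the entire affine space $ts_J+n_J+\mathfrak{u}_J$. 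This is slicker but imports a nontrivial external fact, whereas the paper's inductive cancellation is fully elementary. Both routes rest on the same structural input from \eqref{eq:app centralizer of s_J}, namely $\gamma(s_J)\neq0$ for every $\gamma\in\Phi^+\setminus\Phi(J)$.
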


To prove this, we rather start with the following.
\begin{lemma}\label{lem: app subclaim}
For $x=s_J+\sum_{i\in I}c_{\alpha_i}E_{\alpha_i}$ with arbitrary coefficients $c_{\alpha_i}$, there exists an element $g\in G$ such that  
\begin{align*}
{\rm Ad}(g)x = s_J+\sum_{j\in J}c_{\alpha_j}E_{\alpha_j}.
\end{align*}
\end{lemma}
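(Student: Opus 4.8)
The plan is to exploit the parabolic structure attached to $s_J$. Let $\mathfrak{l}\coloneqq Z_{\mathfrak{g}}(s_J)=\mathfrak{g}(J)\oplus Z$ be the Levi subalgebra and let $\mathfrak{u}\coloneqq\bigoplus_{\alpha\in\Phi^+\setminus\Phi(J)}\mathfrak{g}_{\alpha}$ be the nilradical of the standard parabolic $\mathfrak{p}=\mathfrak{l}\oplus\mathfrak{u}$, where $\Phi(J)$ denotes the set of roots of $\mathfrak{g}(J)$. Since $\alpha_i\in\Phi^+\setminus\Phi(J)$ exactly when $i\notin J$, I would split the given element as $x=x_L+u$ with $x_L\coloneqq s_J+\sum_{j\in J}c_{\alpha_j}E_{\alpha_j}\in\mathfrak{l}$ and $u\coloneqq\sum_{i\in I\setminus J}c_{\alpha_i}E_{\alpha_i}\in\mathfrak{u}$; thus $x_L$ is precisely the target element. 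Let $U=\exp(\mathfrak{u})\subseteq G$ be the unipotent radical. For $g\in U$ the automorphism $\text{Ad}(g)$ preserves $\mathfrak{p}$ and acts trivially on $\mathfrak{p}/\mathfrak{u}\cong\mathfrak{l}$, so $\text{Ad}(g)x\in x_L+\mathfrak{u}$ for every $g\in U$; in other words the $\mathfrak{l}$-component is automatically frozen at $x_L$. Hence it suffices to find $g\in U$ whose conjugate $\text{Ad}(g)x$ has vanishing $\mathfrak{u}$-component.

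The crucial input is the invertibility of $\text{ad}(s_J)$ on $\mathfrak{u}$: it acts on each summand $\mathfrak{g}_{\alpha}$ with $\alpha\in\Phi^+\setminus\Phi(J)$ as the scalar $\alpha(s_J)$, and by \eqref{eq:app centralizer of s_J} one has $\alpha(s_J)=0$ if and only if $\alpha\in\Phi(J)$, so $\alpha(s_J)\neq0$ throughout $\mathfrak{u}$. I would then remove the $\mathfrak{u}$-component by an induction on height. Filter $\mathfrak{u}$ by $\mathfrak{u}^{(k)}=\bigoplus_{\alpha\in\Phi^+\setminus\Phi(J),\ \height(\alpha)\geq k}\mathfrak{g}_{\alpha}$, and suppose inductively that after conjugating we have reached $x_L+v$ whose $\mathfrak{u}$-component $v$ lies in $\mathfrak{u}^{(k)}$ (the base case $k=1$ being $x=x_L+u$ itself); write $v=v_k+v'$ with $v_k$ the height-$k$ part and $v'\in\mathfrak{u}^{(k+1)}$. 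Choosing $Y$ of height $k$ with $\text{ad}(s_J)Y=v_k$, which is possible by the invertibility just noted on the height-$k$ graded piece, and conjugating by $\exp(Y)$, an expansion of $\exp(\text{ad}(Y))(x_L+v)$ shows that the only new contribution to the $\mathfrak{u}$-component in height $k$ is $[Y,s_J]=-\text{ad}(s_J)Y=-v_k$, because $[Y,E_{\alpha_j}]$ for $j\in J$, the bracket $[Y,v]$, and all higher iterated brackets land in $\mathfrak{u}^{(k+1)}$ (here $k\geq1$ is used). Thus the height-$k$ part is cancelled and the new $\mathfrak{u}$-component lies in $\mathfrak{u}^{(k+1)}$, while the $\mathfrak{l}$-component remains $x_L$ by the first paragraph. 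As the heights occurring in $\mathfrak{u}$ are bounded, after finitely many steps the $\mathfrak{u}$-component vanishes, yielding $g\in U$ with $\text{Ad}(g)x=x_L$, as desired.

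The main technical point to watch is exactly the bookkeeping in this induction: one must verify that cancelling the lowest-height obstruction does not reintroduce components in strictly lower heights, which is what the computation of the height-$k$ part above guarantees, together with the fact that $U$-conjugation never disturbs the Levi component $x_L$. Conceptually, however, everything is driven by the single fact that $\text{ad}(s_J)$ is invertible on the nilradical $\mathfrak{u}$; once this is in hand, the obstruction to conjugating $x$ into $\mathfrak{l}$ vanishes degree by degree and the remainder of the argument is elementary.
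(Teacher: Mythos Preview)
Your proof is correct and follows essentially the same strategy as the paper: both use that $\alpha(s_J)\neq 0$ for every $\alpha\in\Phi^+\setminus\Phi(J)$ (equivalently, that $\mathrm{ad}(s_J)$ is invertible on $\mathfrak{u}$) to kill the unwanted root components by conjugating with suitable elements of $\exp(\mathfrak{u})$, inducting on height. The only difference is organizational. The paper works one root vector at a time---first eliminating each $E_{\alpha_\ell}$ with $\ell\in I\setminus J$ by conjugating with $\exp\bigl(\tfrac{c_{\alpha_\ell}}{\alpha_\ell(s_J)}E_{\alpha_\ell}\bigr)$, noting that the new error terms lie strictly above $\alpha_\ell$ and remain outside $\Phi(J)$, and then repeating at successive heights---whereas you cancel an entire height level at once via a single $Y$ solving $\mathrm{ad}(s_J)Y=v_k$. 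Your framing in terms of the parabolic $\mathfrak{p}=\mathfrak{l}\oplus\mathfrak{u}$, together with the observation that $U$-conjugation acts trivially on $\mathfrak{p}/\mathfrak{u}$, gives a cleaner reason why the Levi part $x_L$ is never disturbed; the paper verifies this implicitly in each explicit step. Mathematically the two arguments are the same induction.
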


\begin{proof}
If $I\setminus J$ is empty, then there is nothing to prove. So we can assume that there is an element $\ell\in I\setminus J$. This means that $\alpha_{\ell}$ is not a root of $\mathfrak{g}(J)$ because of \eqref{eq:app centralizer of s_J}. Hence, we have $E_{\alpha_{\ell}}\notin Z_{\mathfrak{g}}(s_J)$ which implies $\alpha_{\ell}(s_J)\neq0$. By expanding the exponential, we have
\begin{align*}
\exp \left(\text{ad}\Big(\frac{c_{\alpha_{\ell}}}{\alpha_{\ell}(s_J)}E_{\alpha_{\ell}}\Big)\right) s_J
= s_J - c_{\alpha_{\ell}}E_{\alpha_{\ell}} .
\end{align*}
So it follows that
\begin{align}\label{eq:app 50}
\exp \left(\text{ad}\Big(\frac{c_{\alpha_{\ell}}}{\alpha_{\ell}(s_J)}E_{\alpha_{\ell}}\Big)\right) x
= \Big(s_J+\sum_{i\in I\setminus\{\ell\}}c_{\alpha_i}E_{\alpha_i} \Big)
+ \sum_{\alpha\succ\alpha_{\ell}} c'_{\alpha} E_{\alpha}
\end{align}
for some coefficients $c'_{\alpha}\in\C$, where $\succ$ is the partial order on $\Phi^+$ introduced in Section \ref{sec: regular Hessenberg varieties}. Here, $\alpha$ satisfying $\alpha\succ\alpha_{\ell}$ is not a root of $\mathfrak{g}(J)$ since $\alpha_{\ell}$ is not. If $(I\setminus J)\setminus\{\ell\}$ is not empty, then we take an element $\ell'\in (I\setminus J)\setminus\{\ell\}$. Applying a similar operator 
\begin{align*}
\exp \left(\text{ad}\Big(\frac{c_{\alpha_{\ell'}}}{\alpha_{\ell'}(s_J)}E_{\alpha_{\ell'}}\Big)\right)
\end{align*}
to the right-hand side of \eqref{eq:app 50}, we obtain
\begin{align*}
\Big(s_J+\sum_{i\in I\setminus\{\ell,\ell'\}}c_{\alpha_i}E_{\alpha_i} \Big)
+ \sum_{\alpha\succ\alpha_{\ell}} c'_{\alpha} E_{\alpha}
+ \sum_{\beta\succ\alpha_{\ell'}} c''_{\beta} E_{\beta}
\end{align*}
for some coefficients $c''_{\beta}\in\C$. By the same reason as above, $\beta$ satisfying $\beta\succ\alpha_{\ell'}$ is not a root of $\mathfrak{g}(J)$. By repeating this procedure, we see that there exists an element $g_1\in G$ such that 
\begin{align*}
\text{Ad}(g_1) x
= \Big(s_J+\sum_{i\in J}c_{\alpha_i}E_{\alpha_i} \Big)
+ \sum_{\text{ht}(\alpha)>1, \ \alpha\notin\Phi(J)} d_{\alpha} E_{\alpha}
\end{align*}
for some coefficients $d_{\alpha}\in\C$, where $\Phi(J)$ is the set of roots of $\mathfrak{g}(J)$. Since $\alpha$ appearing in the summation in the right-hand side is not a root of $\mathfrak{g}(J)$, we can repeat the whole procedure again, and we see that there exists an element $g_2\in G$ such that 
\begin{align*}
\text{Ad}(g_2) x
= \Big(s_J+\sum_{i\in J}c_{\alpha_i}E_{\alpha_i} \Big)
+ \sum_{\text{ht}(\alpha)>2, \ \alpha\notin\Phi(J)} d'_{\alpha} E_{\alpha}
\end{align*}
for some coefficients $d'_{\alpha}\in\C$. Continuing this, one can prove by induction that there exists an element $g\in G$ such that 
\begin{align*}
\text{Ad}(g) x
= s_J+\sum_{i\in J}c_{\alpha_i}E_{\alpha_i},
\end{align*}
as desired.
\end{proof}

\begin{proof}[Proof of Lemma {\rm \ref{lem: app main claim}}]
By the definition of $x(t)$, we have  
\begin{align*}
x(t) = ts_J + \sum_{j\in J}E_{\alpha_j} + \sum_{\ell\in I\setminus J}(1-t)E_{\alpha_\ell}.
\end{align*}
Suppose that $t\neq0$. Then by Lemma \ref{lem: app subclaim}, there exists an element $g'\in G$ such that 
\begin{align*}
\text{Ad}(g')x(t) = ts_J + \sum_{j\in J}E_{\alpha_j}.
\end{align*}
Recalling that the set of simple roots $\Delta=\{\alpha_1,\ldots,\alpha_n\}$ is a $\C$-basis of $\mathfrak{t}^*$, let $\{\epsilon_1,\ldots,\epsilon_n\}$ $\subset$ $\mathfrak{t}$ be its dual basis. Since $t\neq0$, we can take $c\in\C$ such that $e^c=t$. Now let 
\begin{align*}
s = \sum_{j\in J} c\epsilon_j \in\mathfrak{t}.
\end{align*}
Then 
\begin{align*}
\exp(\text{ad}(s))\Big(ts_J + \sum_{j\in J}E_{\alpha_j} \Big)=ts_J + \sum_{j\in J}tE_{\alpha_j}
\end{align*}
since $s_J\in\mathfrak{t}$. Thus, we conclude that there exists an element $g\in G$ such that 
\begin{align*}
\text{Ad}(g)x(t) = 
ts_J + \sum_{j\in J}tE_{\alpha_j}
\end{align*}
which is exactly $tx_J$. 
\end{proof}

\end{document}